\theoremstyle{plain}
\newtheorem{thm}{Theorem}[section]
\newtheorem{prop}[thm]{Proposition}
\newtheorem{lem}[thm]{Lemma}
\newtheorem{cor}[thm]{Corollary}
\newtheorem{claim}[thm]{Claim}
\theoremstyle{definition}
\theoremstyle{remark}
\newtheorem{remark}{Remark}
\def\A{{\cal A}}
\def\cA{{\cal A}}
\def\cS{{\cal S}}
\def\cT{{\cal T}}
 \def\bfb{{\bf{b}}}                        \def\bfz{{\bf{z}}}
\def\bB{{\bf B}}
\def\cc{{\curvearrowright}}
\def\d{{\cal d}}
\def\H{{\mathbb H}}
\def\cK{{\mathcal K}}
\def\cM{{\cal M}}
\def\N{{\mathbb N}}
\def\bP{{\mathbb P}}
\def\cP{{\mathcal P}}
\def\cQ{{\mathcal Q}}
\def\bcP{{\overline {\mathcal P}}}
\def\R{{\mathbb R}}
\def\cR{\mathcal R}
\def\T{{\mathbb T}}
\def\X{{\mathbb X}}
\def\chix{{\raise.5ex\hbox{$\chi$}}}
\def\Z{{\mathbb Z}}
\def\cmu{\check{\mu}}
\def\Irr{{\textrm{Irr}}}
\def\Pol{{Z}}
\def\slashslash{\backslash\kern-.3em\backslash}
\def\ind{\mathbbm{1}}
\newcommand\Aut{\operatorname{Aut}}
\newcommand\bnd{\operatorname{bnd}}
\newcommand\GOOD{\operatorname{GOOD}}
\newcommand\Fix{{\operatorname{Fix}}}
\newcommand\Hom{\operatorname{Hom}}
\renewcommand\Im{\operatorname{Im}}
\newcommand\Isom{\operatorname{Isom}}
\newcommand\Conj{\operatorname{Conj}}
\newcommand\MALG{\operatorname{MALG}}
\newcommand\Map{{\operatorname{Map}}}
\newcommand\Part{\operatorname{Part}}
\newcommand\Proj{\operatorname{Proj}}
\newcommand\res{\upharpoonright}
\newcommand\Fact{{\operatorname{Fact}}}
\newcommand\Sub{\operatorname{Sub}}
\newcommand\stab{\operatorname{stab}}
\newcommand{\csuchthat}{\, :\,}
\begin{document}
\title{Generic Stationary Measures and Actions}
\author{Lewis Bowen\footnote{University of Texas at
    Austin. Supported in part by NSF grant DMS-0968762, NSF CAREER
    Award DMS-0954606 and BSF grant 2008274.}, Yair
  Hartman\footnote{Weizmann Institute of Science. Supported by the
    European Research Council, grant 239885.} and Omer
  Tamuz\footnote{California Institute of Technology.}}
\maketitle

\begin{abstract}
  Let $G$ be a countably infinite group, and let $\mu$ be a generating
  probability measure on $G$. We study the space of $\mu$-stationary
  Borel probability measures on a topological $G$ space, and in
  particular on $Z^G$, where $Z$ is any perfect Polish space. We also
  study the space of $\mu$-stationary, measurable $G$-actions on a
  standard, nonatomic probability space.

  Equip the space of stationary measures with the weak* topology. When
  $\mu$ has finite entropy, we show that a generic measure is an
  essentially free extension of the Poisson boundary of $(G,\mu)$.
  When $Z$ is compact, this implies that the simplex of
  $\mu$-stationary measures on $Z^G$ is a Poulsen simplex. We show
  that this is also the case for the simplex of stationary measures on
  $\{0,1\}^G$.
   
  We furthermore show that if the action of $G$ on its Poisson
  boundary is essentially free then a generic measure is isomorphic to
  the Poisson boundary.
 
  Next, we consider the space of stationary actions, equipped with a
  standard topology known as the weak topology. Here we show that when
  $G$ has property (T), the ergodic actions are meager. We also
  construct a group $G$ without property (T) such that the ergodic
  actions are not dense, for some $\mu$.

  Finally, for a weaker topology on the set of actions, which we call
  the very weak topology, we show that a dynamical property (e.g.,
  ergodicity) is topologically generic if and only if it is generic in
  the space of measures. There we also show a Glasner-King type 0-1
  law stating that every dynamical property is either meager or
  residual.
\end{abstract}

\noindent
{\bf Keywords}: stationary action, Poisson boundary\\
{\bf MSC}:37A35\\

\noindent
\tableofcontents

\section{Introduction}
This paper is motivated by two subjects: {\em genericity in dynamics}
and {\em stationary actions}. We begin our introduction with
background on genericity in dynamics.

\subsection{Genericity}

There is a long history of topological genericity in dynamics
beginning with Halmos~\cite{halmos1944general}, who showed that a
generic automorphism is weakly mixing. More precisely, he studied the
group $\Aut(X,\nu)$ of measure-preserving transformations of a
standard Lebesgue probability space $(X,\nu)$ (in which two
transformations that agree modulo measure zero are identified). This
group is naturally equipped with a Polish topology. Halmos proved that
the set of weakly mixing transformations is a dense $G_\delta$ subset
of $\Aut(X,\nu)$. Since then, it has been proven that a generic
measure-preserving transformation has zero entropy, rank
one~\cite{ferenczi-rank}, is rigid, has interesting spectral
properties~\cite{junco-generic, stepin-generic} and so on. There also
many interesting results about generic
homeomorphisms~\cite{hochman-generic}.

Instead of studying the space of transformations, one may study the
space of measures. Precisely, consider a homeomorphism $T: X\to X$ of
a topological space $X$. Let $\cP_T(X)$ denote the space of all
$T$-invariant Borel probability measures on $X$ with the weak*
topology. If $X$ is compact, then $\cP_T(X)$ is a Choquet simplex: it
is a convex, compact subset of a locally convex vector space, such
that every measure in $\cP_T(X)$ has a unique representation as the
barycenter of a probability measure on the set of extreme points of
$\cP_T(X)$ (and the extreme points are exactly the ergodic measures
for $T$). Given any abstract Choquet simplex $\Sigma$, there exists a
compact metric space $X$ and a homeomorphism $T:X\to X$ such that
$\cP_T(X)$ is affinely homeomorphic to
$\Sigma$~\cite{down-serafin}. Therefore, it is natural to look for
``special'' homeomorphisms. Indeed, there is a canonical choice:
consider the product space $[0,1]^\Z$ with the shift action
$T:[0,1]^\Z \to [0,1]^\Z$ given by $(Tx)_i=x_{i+1}$. It follows from
Rohlin's Lemma (for example) that the ergodic measures are dense (and
therefore residual) in
$\cP_T([0,1]^\Z)$. By~\cite{lindenstrauss1978poulsen} there is a
unique Choquet simplex with this property (up to affine
homeomorphisms) called the {\em Poulsen simplex}.

This result was greatly generalized in an influential paper of Glasner
and Weiss~\cite{glasner1997kazhdan}. They considered an arbitrary
countable group $G$ acting by homeomorphisms on a compact metrizable
space $X$ (they also considered locally compact groups but we will not
need that here). Let $\cP_G(X)$ denote the space of $G$-invariant
Borel probability measures on $X$ with the weak* topology. As before
this is a Choquet simplex and the ergodic measures are the extreme
points. When $G$ has property (T), they showed that the set of ergodic
measures is closed in $\cP_G(X)$, and therefore $\cP_G(X)$ is a {\em
  Bauer simplex}. When $G$ does not have property (T) then they show
that $\cP_G(\{0,1\}^G)$ is a Poulsen simplex, and $G$ acts on
$\{0,1\}^G$ by $(gx)_k = x_{g^{-1}k}$ for $x\in \{0,1\}^G, g,k\in
G$. Their proof extends to any $\cP_G(W^G)$, with $W$ a
non-trivial compact space.

The fact that ergodic transformations are residual in $\Aut(X,\nu)$
and ergodic measures are residual in $\cP_T([0,1]^\Z)$ is no accident:
Glasner and King proved that for any dynamical property P, a generic
element of $\Aut(X,\nu)$ has P if and only if a generic measure in
$\cP_T([0,1]^\Z)$ has P~\cite{GK98}. In fact, their proof extends to
measure preserving actions of all countable groups.

To make a more precise statement, let $G$ denote a countable
group. The space $A(G,X,\nu)=\Hom(G,\Aut(X,\nu))$ of all homomorphisms
from $G$ to $\Aut(X,\nu)$ is equipped with the topology of pointwise
convergence, under which it is a Polish space.  Glasner and King
proved that if P is any dynamical property then a generic action in
$A(G,X,\nu)$ has P if and only if a generic measure in
$\cP_G([0,1]^G)$ has P. The precise statement is recounted in
\S~\ref{sec:correspondence} of this paper.

As noted in~\cite{glasner2006every}, these results imply the following
dichotomy: if $G$ has (T) then the ergodic actions form a meager
subset of $A(G,X,\nu)$ while if $G$ does not have (T) then the ergodic
actions form a residual subset of $A(G,X,\nu)$. This result was
extended by Kerr and Pichot~\cite{kerr2008asymptotic} to show that if
$G$ does not have (T) then the weakly mixing actions are a dense
$G_\delta$ subset. Kerr and Pichot also generalize this result to
$C^*$-dynamical systems.

Let us also mention here the {\em weak Rohlin property} as well as the
{\em 0-1 law} of Glasner and King~\cite{GK98}.  $\Aut(X,\nu)$ acts
continuously on this space by conjugation, and a group has the weak
Rohlin property if $A(G,X,\nu)$ has a dense
$\Aut(X,\nu)$-orbit. By~\cite{glasner2006every} all countable groups
have the weak Rohlin property.  Any group with the weak Rohlin
property (and hence any countable group) obeys a 0-1 law: every
Baire-measurable dynamical property of $A(G,X,\nu)$ is either
residual or meager.

\subsection{Stationarity}

Let $G$ be a countable group and $\mu$ a probability measure on $G$
whose support generates $G$ as a semigroup. An action $G \cc (X,\nu)$
on a probability space is {\em $\mu$-stationary} (or just {\em
  stationary} if $\mu$ is understood) if
\begin{align}
  \label{eq:stationarity}
  \sum_{g \in G} \mu(g)  g_*\nu =\nu.
\end{align}
In this case, $\nu$ is said to be {\em $\mu$-stationary}, and it
follows that the action is nonsingular. Stationary actions are
intimately related to random walks and harmonic functions on groups,
as well as to the Poisson boundary~\cite{furstenberg1963noncommuting,
  furstenberg1971random, furstenberg1974boundary}, which is itself a
stationary space.

In principle stationary actions exist in abundance: if $G$ acts
continuously on a compact metrizable space $X$ then there exists a
$\mu$-stationary probability measure on $X$. By contrast, if $G$ is
non-amenable then an invariant measure need not exist. However, there
are surprisingly few explicit constructions of stationary actions:
aside from measure-preserving actions and Poisson boundaries, there
are constructions from invariant random
subgroups~\cite{bowen2010random, hartman2012furstenberg} and methods
for combining stationary actions via
joinings~\cite{furstenberg2009stationary}. There is a general
structure theory of stationary
actions~\cite{furstenberg2009stationary} and a very deep structure
theory in the case that $G$ is a higher rank semisimple Lie
group~\cite{nevo2000rigidity, nevo-zimmer-annals}. There is also a
growing literature on classifying stationary
actions~\cite{bourgain2007invariant, bourgain2011stationary,
  benoist2011mesures} and stationarity has found recent use in proving
nonexistence of $\sigma$-compact topological
models~\cite{conley-stationary} for certain Borel actions.

The Poisson boundary of $(G,\mu)$ is the space of ergodic components
of the shift action on $(G^\N,\bP_\mu)$ where $\bP_\mu$ is the law of the
random walk on $G$ with
$\mu$-increments~\cite{kaimanovich1983random}. We denote the Poisson
boundary by $\Pi(G,\mu)$. This space was introduced by Furstenberg who
showed that the space of bounded $\mu$-harmonic functions on $G$ is
naturally isomorphic with
$L^\infty(\Pi(G,\mu))$~\cite{furstenberg1963noncommuting,
  furstenberg1971random}. It plays a central role in the structure
theory of stationary actions~\cite{furstenberg2009stationary} and is
important in rigidity theory~\cite{bader2006factor, nevo2000rigidity,
  hartman2013stabilizer}. It also plays a key role in this paper, and
so we define it formally in \S~\ref{sec:Poisson}.

\subsection{Main results}

Motivated by the above genericity results we ask the following
questions: is a generic stationary action ergodic? Is a generic
stationary measure ergodic?  Does it depend on whether $G$ has
property (T)? Is there a Glasner-King type correspondence principle
relating dynamical properties of generic stationary actions and
measures? Is there a Glasner-King 0-1 law for stationary actions?  We
answer some of these questions next.%

\subsubsection{Spaces of measures}
Let $G$ be a discrete, countable group. Our investigations begin with
spaces of measures. So if $G$ acts by homeomorphisms on a topological
space $Y$, let $\cP_\mu(Y)$ denote the space of all $\mu$-stationary
Borel probability measures on $Y$ with the weak*
topology. By~\eqref{eq:stationarity} this is a closed subspace of
$\cP(Y)$, the space of all Borel probability measures on $Y$. Its
extreme points are the ergodic measures and it is a Choquet simplex if
$Y$ is compact and Hausdorff~\cite{bader2006factor}. Our first result
is:

\begin{thm}[Generic stationary measures]
  \label{thm:generic-poisson-boundary}
  Let $\Pol$ be a perfect Polish space, and let $\mu$ have finite
  entropy. Then a generic measure in $\cP_\mu(\Pol^G)$ is an ergodic,
  essentially free extension of the Poisson boundary, denoted
  $\Pi(G,\mu)$. Moreover, if the action $G \curvearrowright
  \Pi(G,\mu)$ is essentially free, then a generic measure $\nu \in
  \cP_\mu(\Pol^G)$ is such that $G \cc (\Pol^G,\nu)$ is measurably
  conjugate to $G \cc \Pi(G,\mu)$.
\end{thm}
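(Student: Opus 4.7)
My plan is to verify that the set $\sE \subset \cP_\mu(\Pol^G)$ of measures giving an ergodic, essentially free extension of $\Pi(G,\mu)$ is a dense $G_\delta$, hence residual, and then to upgrade the density step under the additional freeness hypothesis to obtain the ``moreover'' statement.

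For the $G_\delta$ step, ergodicity of a $\mu$-stationary measure can be detected by convergence of the random-walk time averages $\frac{1}{n}\sum_{k=1}^n f(g_1\cdots g_k x)$ to $\int f\,d\nu$ for $f$ ranging over a countable weak$^*$-dense family in $C(\Pol^G)$, which is a $G_\delta$ condition on $\nu$. Essential freeness is $G_\delta$ because, for each nontrivial $g$, the fixed set $F_g \subset \Pol^G$ is closed, so $\nu \mapsto \nu(F_g)$ is weak$^*$ upper semi-continuous and $\{\nu : \nu(F_g)=0\}$ is $G_\delta$. The delicate condition is ``extension of $\Pi(G,\mu)$''; I would characterize it via Furstenberg entropy, using that under the finite-entropy hypothesis an ergodic $\mu$-stationary action is an extension of the Poisson boundary iff $h_\mu(\nu)$ attains the maximal value $h(\mu)$ (the asymptotic random-walk entropy, by Kaimanovich). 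Appropriate semi-continuity of $h_\mu(\cdot)$ on $\cP_\mu(\Pol^G)$ then renders the attainment set $G_\delta$.

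For density, given $\nu_0 \in \cP_\mu(\Pol^G)$ and a weak$^*$ neighborhood $U$ of $\nu_0$, I build $\nu \in \sE \cap U$ as follows. A Markov--Kakutani fixed-point argument applied to the $\mu$-averaging operator on the compact convex set of couplings with marginals $\nu_0$ and $\nu_\Pi$ produces a $\mu$-stationary joining $\lambda$ on $\Pol^G \times \Pi(G,\mu)$, tautologically an extension of $\Pi$ via the second projection. Ergodicity is arranged by passing to an ergodic component (the $\Pi$-marginal remains $\nu_\Pi$ by uniqueness of the harmonic measure) and essential freeness by further joining with a free Bernoulli $\mu$-stationary measure on $\Pol^G$. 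Because $\Pol$ is perfect Polish, I may choose a measurable injection $\phi:\Pol^G\times\Pi(G,\mu)\to \Pol$ agreeing with the evaluation $(x,\xi)\mapsto x_e$ outside a set of arbitrarily small $\lambda$-measure; the $G$-equivariant embedding $\Phi(x,\xi):=(\phi(g^{-1}(x,\xi)))_{g\in G}$ then gives $\nu := \Phi_*\lambda \in \cP_\mu(\Pol^G)$, measurably conjugate to $\lambda$, lying in $\sE$, and arbitrarily weak$^*$-close to $\nu_0$.

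For the moreover part, when $G \cc \Pi(G,\mu)$ is essentially free, I repeat the density construction with $(\Pi(G,\mu),\nu_\Pi)$ itself in place of the joining $\lambda$, producing a dense set of $\nu$ with $(\Pol^G,\nu)$ measurably conjugate to $\Pi(G,\mu)$. Combined with the fact that isomorphism to a fixed essentially free action is a $G_\delta$ property in $\cP_\mu(\Pol^G)$, this yields the asserted residual set. I expect the chief obstacle to be the entropy-theoretic $G_\delta$ step for ``extension of $\Pi$'': establishing the equivalence with maximal Furstenberg entropy under the finite-entropy hypothesis, and obtaining the required semi-continuity of $h_\mu(\cdot)$ despite the discontinuous Radon--Nikodym derivatives entering its definition, requires careful use of Kaimanovich's entropy theory and of the characterization of $\Pi(G,\mu)$ as the maximal $\mu$-boundary.
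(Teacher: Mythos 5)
There is a genuine gap, and it sits at the heart of the density argument. After you build a stationary joining $\lambda$ of $\nu_0$ with the Poisson boundary, you ``arrange ergodicity by passing to an ergodic component.'' But the $\Pol^G$-marginal of an ergodic component of $\lambda$ is an ergodic measure occurring in the ergodic decomposition of $\nu_0$, not $\nu_0$ itself; your final measure $\Phi_*\lambda$ is then weak*-close to that ergodic component, not to $\nu_0$. If $\nu_0$ is a nontrivial convex combination of two mutually singular ergodic measures, no ergodic component is close to $\nu_0$, and you cannot reduce to ergodic $\nu_0$ by invoking density of ergodic measures --- that density is precisely (part of) what the theorem asserts, since $\cP_\mu(\Pol^G)$ is a priori only a Choquet simplex whose extreme points need not be dense. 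The paper's route is structurally different exactly here: it fixes \emph{one} ergodic, essentially free extension $\bfb = G \cc (B,\nu)$ of $\Pi(G,\mu)$ and proves (Theorem~\ref{thm:0-1-dense}) that the set $\Fact(\bfb,W^G)$ of its factor measures is dense in $\cP_\mu(W^G)$, by disintegrating an arbitrary (possibly non-ergodic) target $\theta = \int \theta_b\,d\nu(b)$ over the Poisson boundary and painting $\theta_b$-typical names on Rohlin towers supplied by the hyperfiniteness of the amenable boundary action. Factors of the fixed ergodic $B$ are automatically ergodic, yet they can approximate non-ergodic targets; factors are then upgraded to conjugates of $B$ on $\X^G=(\{0,1\}^\N)^G$ by interleaving coordinates (Theorem~\ref{thm:poisson-dense}), and the result is transported to a general perfect Polish $\Pol$ via the correspondence principle. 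Your embedding trick (a measurable injection agreeing with evaluation off a small set) plays the role of the paper's interleaving, and is fine for a fixed finite window once one controls $\lambda(gE)$ by the uniform Radon--Nikodym bounds; the failure is upstream, in the ergodicity step.

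The ``moreover'' part has a second, related gap: running the construction ``with $\Pi(G,\mu)$ itself in place of the joining'' severs all contact with $\nu_0$, so there is no reason for $\Phi_*\nu_\Pi$ to land in a prescribed neighborhood of $\nu_0$; one still needs the factor-approximation mechanism, with $B=\Pi(G,\mu)$, to steer the conjugate copy of the boundary close to $\nu_0$. Also, you assert that ``isomorphism to a fixed essentially free action is a $G_\delta$ property,'' which is not justified and not how the paper proceeds: the $G_\delta$-ness of $\cP^{Poisson}_\mu$ is obtained (only under $H(\mu)<\infty$) from the Kaimanovich characterization Poisson $=$ maximal entropy $\cap$ proximal, with proximality shown to be $G_\delta$ via an affine homeomorphism of $\cP_\mu(Y)$ with a space of $\Z$-invariant measures on $G^\Z\times Y$. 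Your $G_\delta$ treatment of ergodicity via random-walk time averages is likewise shakier than needed; the extreme-point argument for Choquet simplices gives it directly. The entropy semi-continuity you flag as the chief obstacle is in fact the easy part (it is lower semi-continuity, quoted from the literature); the real work is the Rohlin-tower density argument.
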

Here, $G \cc (B,\nu)$ is an extension of the Poisson boundary
$\Pi(G,\mu)$ if there exists a $G$-equivariant factor $(B,\nu) \to
\Pi(G,\mu)$.  Note that an ergodic, essentially free extension of the
Poisson boundary always exists; one can take, for example, the product
of the Poisson boundary with a Bernoulli shift.

\begin{cor}
  If $\Pol$ is a compact perfect Polish space then $\cP_\mu(\Pol^G)$
  is a Poulsen simplex.
\end{cor}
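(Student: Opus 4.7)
The plan is to recognize that the corollary follows almost immediately from Theorem~\ref{thm:generic-poisson-boundary} combined with the uniqueness of the Poulsen simplex, once we verify the structural prerequisites. Recall that a metrizable Choquet simplex is a Poulsen simplex if and only if its set of extreme points is dense; by the Lindenstrauss--Olsen--Sternfeld theorem, any two such simplices are affinely homeomorphic. So the task reduces to checking three points for $\cP_\mu(Z^G)$: compactness and metrizability, the Choquet simplex property, and density of the extreme points.

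First I would observe that, because $Z$ is compact Polish, so is $Z^G$ in the product topology, and hence $\cP(Z^G)$ equipped with the weak* topology is a compact metrizable convex subset of a locally convex space. Since the stationarity condition \eqref{eq:stationarity} is preserved under weak* limits, $\cP_\mu(Z^G)$ is a closed, convex, and hence compact metrizable subset. By the result of Bader--Shalom cited in the preceding text, this set is a Choquet simplex whose extreme points are precisely the ergodic $\mu$-stationary measures. This handles the first two points with no new work.

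For density of the extreme points, I would invoke Theorem~\ref{thm:generic-poisson-boundary} directly. A compact perfect Polish space is in particular a perfect Polish space, so the theorem applies (with the standing finite-entropy hypothesis on $\mu$), and a generic measure in $\cP_\mu(Z^G)$ is an ergodic essentially free extension of the Poisson boundary. In particular, the ergodic measures form a residual subset. Since $\cP_\mu(Z^G)$ is a nonempty Polish space (being compact metrizable), it is a Baire space, and any residual set is dense. Thus the extreme points are dense in $\cP_\mu(Z^G)$.

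Combining these three facts with the uniqueness theorem concludes the argument: $\cP_\mu(Z^G)$ is affinely homeomorphic to the Poulsen simplex. There is essentially no obstacle here beyond a bookkeeping check; all of the genuine difficulty has been absorbed into Theorem~\ref{thm:generic-poisson-boundary}, whose content is exactly what is needed to pass from ``ergodic measures exist'' to ``ergodic measures are dense.''
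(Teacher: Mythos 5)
Your proof is correct and is exactly the intended derivation: the paper states the corollary without a separate proof, as an immediate consequence of Theorem~\ref{thm:generic-poisson-boundary} (ergodic measures are residual, hence dense in the compact metrizable Baire space $\cP_\mu(\Pol^G)$), the fact from~\cite{bader2006factor} that $\cP_\mu(\Pol^G)$ is a Choquet simplex with the ergodic measures as extreme points, and the Lindenstrauss--Olsen--Sternfeld uniqueness of the metrizable simplex with dense extreme points. The only remark worth adding is that your route inherits the finite-entropy hypothesis on $\mu$ from Theorem~\ref{thm:generic-poisson-boundary}, whereas the density of ergodic measures --- which is all the corollary needs --- already follows from Theorem~\ref{thm:0-1-dense} for any compact metric $W$ (factors of an ergodic action are ergodic), so the Poulsen conclusion in fact holds without that hypothesis.
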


Observe that we do not put any conditions on the group $G$ in the
above results. In particular, $G$ is allowed to have property
(T). Perhaps the most unusual aspect of the result above occurs when
$G$ acts essentially freely on its Poisson boundary. For in this case,
there is a generic measure-conjugacy class. This might be considered
analogous to the Kechris-Rosendal result that there is a generic conjugacy
class in the group of homeomorphisms of the Cantor
set~\cite{kechris-rosendal-2007, akin-weiss}. See also~\cite{glasner2008topological} for
other examples of transformation groups with generic conjugacy
classes.

The main technical component of the proof of
Theorem~\ref{thm:generic-poisson-boundary} is
Theorem~\ref{thm:0-1-dense}, which states that given an ergodic,
essentially free extension of the Poisson boundary $G\cc (B,\nu)$ and
a compact metric space $W$, the set of measures on $\cP_\mu(W^G)$ that
are $G$-factors of $G\cc (B,\nu)$ is dense.

To motivate our interest on groups that act freely on their Poisson
boundaries, we provide the following straightforward claim.
\begin{prop}
  \label{prop:free-boundary}
  Let $G$ be a torsion-free, non-elementary word hyperbolic
  group. Then $G$ acts essentially freely on its Poisson boundary
  $\Pi(G,\mu)$, for any generating measure $\mu$.
\end{prop}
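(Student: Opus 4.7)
\emph{Proof plan.} The strategy is to realize the Gromov boundary $(\partial G, \nu)$, where $\nu$ is the hitting measure of the $\mu$-random walk, as a $G$-equivariant measurable factor of the Poisson boundary $\Pi(G,\mu)$, and then to show that $G$ acts essentially freely on $(\partial G,\nu)$. This suffices because essential freeness lifts through factor maps: any $G$-equivariant factor $\pi\colon \Pi(G,\mu)\to(\partial G,\nu)$ satisfies $\Stab(x) \subseteq \Stab(\pi(x))$ for every $x$, so $\nu$-a.e.\ trivial stabilizer downstairs pulls back to a.e.\ trivial stabilizer upstairs.

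For the factor map I invoke the boundary convergence theorem for random walks on hyperbolic groups: for any generating $\mu$ on a non-elementary word hyperbolic $G$, almost every sample path of the $\mu$-random walk converges in the Gromov compactification to a point of $\partial G$. The distribution $\nu$ of this limit is a $\mu$-stationary probability measure on $\partial G$, and the hitting map $(G^\N,\bP_\mu)\to(\partial G,\nu)$ is $G$-equivariant and shift-invariant, so $(\partial G,\nu)$ is a $\mu$-boundary and hence a measurable $G$-factor of $\Pi(G,\mu)$.

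For essential freeness of $G \cc (\partial G,\nu)$ two observations suffice. First, since $G$ is torsion-free, every nontrivial $g\in G$ has infinite order and is therefore loxodromic on $\partial G$, so $\Fix(g)\subset\partial G$ consists of exactly the two endpoints of $g$'s translation axis. Second, $\nu$ is non-atomic: a maximal atom would, by $\mu$-stationarity together with the generating hypothesis, have a finite $G$-orbit, yielding a non-empty finite $G$-invariant subset of $\partial G$; but any two independent loxodromic elements of $G$ have disjoint fixed-point pairs, so no non-empty finite $G$-invariant subset of $\partial G$ exists. Combining these two facts, $\nu(\Fix(g))=0$ for every $g\ne e$, and summing over the countable set $G\setminus\{e\}$ gives the claim.

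The only non-routine input is the boundary convergence for an arbitrary generating $\mu$ without a moment assumption (which follows from the work of Maher and Maher--Tiozzo); the remaining steps are elementary consequences of the north--south dynamics of loxodromic elements on $\partial G$.
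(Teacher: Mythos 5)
Your proof is correct, but it takes a genuinely different route from the paper's. The paper never touches the Gromov boundary: it first proves a general theorem that for any countable $G$ with only countably many amenable subgroups, the stabilizer of almost every point of $\Pi(G,\mu)$ equals a fixed normal amenable subgroup $N \vartriangleleft G$. The ingredients there are Zimmer-amenability of the Poisson boundary action (so the pushforward of the boundary measure under the stabilizer map $\stab\colon B\to\Sub_G$ is a stationary measure concentrated on amenable subgroups), the observation that a countably supported stationary measure is invariant, and triviality of invariant factors of the Poisson boundary; the proposition then follows from the strong Tits alternative (amenable subgroups of a hyperbolic group are virtually cyclic, hence finitely generated, hence countable in number) together with the fact that a torsion-free non-elementary hyperbolic group has no nontrivial normal amenable subgroup. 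You instead exhibit a concrete factor, the Gromov boundary with the hitting measure, verify essential freeness there via north--south dynamics of loxodromic elements and non-atomicity of the hitting measure, and lift freeness through the factor map using $\Stab(x)\subseteq\Stab(\pi(x))$; all of these steps are sound. What your route buys is a hands-on geometric argument that avoids the stabilizer machinery and the amenability of the boundary action; what the paper's route buys is a stronger and more general conclusion (an exact identification of the almost-sure stabilizer, for any group with countably many amenable subgroups) and independence from the boundary convergence theorem for arbitrary generating $\mu$ without moment assumptions, which is the one nontrivial external input your argument requires and which you correctly flag.
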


\subsubsection{Spaces of actions}

Next we turn our attention to spaces of actions. Here, it appears that
there are two natural choices for the topology on the space of
stationary actions. To be precise, let $\Aut^*(X,\nu)$ denote the
group of nonsingular transformations of $(X,\nu)$ in which two such
transformations are identified if they agree up to null sets. We embed
this group into $\Isom(L^p(X,\nu))$ via
$$T \mapsto U_{T,p}, \quad U_{T,p}(f) = \left( \frac{dT_*\nu}{d\nu}(x)\right)^{1/p} f\circ T^{-1}.$$
We equip $\Isom(L^p(X,\nu))$ with either the weak or strong operator
topology and $\Aut^*(X,\nu)$ with the subspace topology. From results
in~\cite{megrelishvili2001operator, CK79}, it follows that only two
different topologies on $\Aut^*(X,\nu)$ result from this construction:
the topology derived from the weak operator topology on
$\Isom(L^1(X,\nu))$ and the topology derived from any other choice of
$1\le p <\infty$ and (weak/strong). The latter topology has been
studied previously~\cite{ionescu-tulcea, CK79} and is called the {\em
  weak topology}. Therefore, we call the topology derived from the
weak operator topology on $\Isom(L^1(X,\nu))$, the {\em very weak
  topology}. Both of these topologies are Polish topologies. However,
only the weak topology is a group topology.

Next we let $A^*(G,X,\nu)=\Hom(G,\Aut^*(X,\nu))$ be the space of
homomorphisms of $G$ into $\Aut^*(X,\nu)$ with the topology of
pointwise convergence and $A_\mu(G,X,\nu) \subset A^*(G,X,\nu)$ the
subspace of $\mu$-stationary actions with the subspace topology. This
gives two distinct topologies on $A_\mu(G,X,\nu)$ (depending on the
choice of topology on $\Aut^*(X,\nu)$) which we also call the weak and
very weak topologies. Both topologies are Polish and both topologies
restrict to the same topology on $A(G,X,\nu)$ (which is the usual one,
as studied in~\cite{Kechris-global-aspects, kerr2008asymptotic} for
example). Note that as in the case of the measure preserving actions,
the group $\Aut(X,\nu)$ acts on $A_\mu(G,X,\nu)$ by conjugations. This
action is continuous under both topologies on $A_\mu(G,X,\nu)$.

The weak topology on $A_\mu(G,X,\nu)$ is perhaps more natural, since
it is derived from the group topology on $\Isom(L^1(X,\nu))$. However,
under the very weak topology, $A_\mu(G,X,\nu)$ better resembles the
space of measure preserving actions $A(G,X,\nu)$. Indeed, as we will
show below, under the very weak topology there is always a dense
$\Aut(G,X,\nu)$-orbit in $A_\mu(G,X,\nu)$, and hence a 0-1 law. This
is not true in the weak topology, unless the only stationary measures
are invariant.

\subsubsection{The weak topology}

We will prove:
\begin{thm}
  \label{thm:T-erg-meager}
  If $G$ has property (T) then the set of ergodic actions in
  $A_\mu(G,X,\nu)$ is nowhere dense, when $A_\mu(G,X,\nu)$ is endowed
  with the weak topology.
\end{thm}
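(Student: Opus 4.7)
The plan is to imitate the Glasner--Weiss argument (property (T) forces ergodic measure-preserving actions to form a closed subset of $A(G,X,\nu)$), adapted to the stationary setting and to the weak topology. I would split the claim into (I) the set of ergodic actions is closed, and (II) its complement is dense; together these give nowhere-density.

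For (I), suppose $\pi_n \to \pi$ weakly, with each $\pi_n$ non-ergodic witnessed by a $\pi_n$-invariant measurable set $A_n$ with $\alpha_n := \nu(A_n) \in [\epsilon,1-\epsilon]$. Passing to a subsequence, $\alpha_n \to \alpha \in (0,1)$ and $\mathbf{1}_{A_n} \to \phi$ weak-$*$ in $L^\infty(X,\nu)$ with $\int \phi\,d\nu = \alpha$. Weak-topology convergence realizes SOT convergence $U^{\pi_n}_{g,2} \to U^\pi_{g,2}$ on $L^2$ together with $L^1$-convergence of $d\pi_n(g)_*\nu/d\nu$ (obtained by testing $U^{\pi_n}_{g,1}$ against $f=\mathbf{1}_X$), which strengthens to $L^2$-convergence of the square-roots $\sqrt{d\pi_n(g)_*\nu/d\nu}$. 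Applying the standard joint-continuity property of SOT-convergent, uniformly bounded operators acting on weakly-convergent bounded sequences to the identity
\begin{equation*}
  U^{\pi_n}_{g,2}\mathbf{1}_{A_n} \;=\; \sqrt{d\pi_n(g)_*\nu/d\nu}\,\mathbf{1}_{A_n}
\end{equation*}
(the $\pi_n$-invariance of $A_n$), one derives in the limit $U^\pi_{g,2}\phi = \sqrt{d\pi(g)_*\nu/d\nu}\,\phi$. Since the Radon--Nikodym factor is $\nu$-a.e.\ positive, this rearranges to $\phi\circ\pi(g)^{-1} = \phi$ $\nu$-a.e.\ for every $g \in G$; that is, $\phi$ is a $\pi$-invariant element of $L^\infty(X,\nu)$ with mean $\alpha \in (0,1)$.

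The main obstacle is that \emph{a priori} the weak-$*$ limit $\phi$ could collapse to the constant $\alpha$, providing no witness to non-ergodicity of $\pi$. This is where property (T) enters. Normalizing to the unit vectors $v_n = (\mathbf{1}_{A_n} - \alpha_n)/\sqrt{\alpha_n(1-\alpha_n)} \in L^2_0(X,\nu)$, I would combine the $\pi_n$-invariance relation above, the $L^2$-convergence of $\sqrt{d\pi_n(g)_*\nu/d\nu}$, and the uniform boundedness of $\mathbf{1}_{A_n}$ in $L^\infty$ to show that for every finite $F \subset G$, $\sup_{g \in F}\|U^\pi_{g,2}v_n - v_n\|_{L^2} \to 0$ modulo a controlled error governed by $\|\sqrt{d\pi(g)_*\nu/d\nu} - 1\|_{L^2}$. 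A Kazhdan pair $(F,\epsilon_0)$ applied to the Koopman representation of $\pi$ (restricted to the appropriate invariant subspace complementary to any measure-preserving part) then produces a non-zero $\pi$-invariant vector; superlevel sets of the resulting non-constant $\pi$-invariant bounded function supply a non-trivial $\pi$-invariant measurable set, so $\pi$ is non-ergodic and the ergodic set is closed.

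For (II), density of non-ergodic actions: given any $\pi \in A_\mu(G,X,\nu)$, fix a measure-preserving isomorphism $(X,\nu) \cong (X \times \{0,1\},\nu \otimes \tfrac12(\delta_0+\delta_1))$ and transport back the evidently non-ergodic $\mu$-stationary action $\pi \times \mathrm{id}$; by choosing the isomorphism with asymptotically small displacement relative to a growing sequence of finite subsets of $G$ and finite algebras in $X$, the resulting actions converge to $\pi$ in the weak topology while remaining non-ergodic.

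The hardest step is the property (T) application in (I): in the nonsingular setting the indicator $\mathbf{1}_{A_n}$ is not literally a fixed vector of the Koopman representation but a twisted eigenvector scaled by $\sqrt{d\pi_n(g)_*\nu/d\nu}$. Transferring the exact $\pi_n$-invariance to genuine approximate $\pi$-invariance of the $v_n$ under a single unitary representation requires careful bookkeeping of the cocycle convergence and its separation from the composition action, so that the Kazhdan inequality applies directly and rules out the degenerate case $\phi \equiv \alpha$.
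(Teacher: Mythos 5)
Your overall architecture does not fit together. In (I) you take \emph{non-ergodic} $\pi_n\to\pi$ and conclude $\pi$ is non-ergodic; that is a closedness statement about the complement of the ergodic set (and, because of the uniform $\epsilon$ in $\nu(A_n)\in[\epsilon,1-\epsilon]$, really only that each slice $N_\epsilon$ has closure inside the non-ergodic actions, i.e.\ that the ergodic set is $G_\delta$). It is not the statement ``the ergodic actions are closed.'' Worse, if you actually had what your argument aims at (non-ergodic actions closed) together with (II) (non-ergodic actions dense), you would conclude that \emph{every} stationary action is non-ergodic, which is false (the Poisson boundary action is ergodic). What nowhere-density requires is that the \emph{closure of the ergodic set} miss a dense set: i.e.\ you must start from \emph{ergodic} $a_n\to a$ and derive a constraint on $a$ that fails on a dense family. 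The paper does exactly this: it defines $A'$ to be the actions admitting an invariant set $Y$ with $0<\nu(Y)<1$ \emph{on which the action is measure-preserving}, shows $A'$ is dense, and shows the closure of the ergodic actions is disjoint from $A'$. The measure-preserving condition on $Y$ is not decoration: it is precisely what makes $\ind_Y$ an honest fixed vector of the Koopman representation $U_{a(g),2}$ (otherwise $\ind_Y$ is only your ``twisted eigenvector''), and your proposed dense family $\pi\times\mathrm{id}$ restricted to each half is a copy of $\pi$, which need not be measure-preserving, so it would not serve even if the rest worked.

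Two further concrete failures. First, your density construction keeps the weights $(\tfrac12,\tfrac12)$ fixed; an action with an invariant set of measure exactly $\tfrac12$ cannot converge weakly to an ergodic action of a property (T) group -- this is ruled out by the very almost-invariant-vector mechanism you are invoking in (I). The invariant piece must shrink: the paper conjugates $a$ by $T_n(x)=(1-1/n)x$ and acts trivially on $(1-1/n,1]$, so the invariant set has measure $1/n\to 0$ and the restriction there is the identity (hence measure-preserving). Second, your application of property (T) needs all the almost-invariant vectors to live in a \emph{single} unitary representation. Your plan puts them in the Koopman representation of the limit $\pi$, which forces you to fight the twisted-invariance problem you flag as ``the hardest step''; the paper avoids it entirely by forming an ergodic stationary joining $\eta$ of the ergodic approximants $a_n$ on $X^{\N}$ and observing that $f_n(x)=\ind_Y(x_n)$ is an almost-invariant sequence in $L^2(X^{\N},\eta)$ (here the exact identity $U_{a(g),2}\ind_Y=\ind_Y$ from the $A'$ condition is what turns $\|a_n(g)\ind_Y-a(g)\ind_Y\|\to 0$ into almost invariance). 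Property (T) then yields an invariant vector, Corollary~\ref{cor:non-invariant-vecs} forces the joining, hence each $a_n$ and the limit $a$, to be measure-preserving, and the Glasner--Weiss closedness of ergodic m.p.\ actions gives the contradiction with $a\in A'$. You should restructure your proof around this quantifier order before worrying about the analytic bookkeeping.
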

Recall that the same result holds in the measure-preserving
case~\cite{glasner1997kazhdan}. Therefore, it makes sense to ask: if
$G$ does not have (T) then are the ergodic actions generic? In this
generality, the answer is no: we provide an explicit counterexample.

\begin{thm}
  \label{thm:non-T-non-dense}
  There exists a countable group $G$ that does not have property (T), and a
  generating probability measure $\mu$ on $G$ such that the set of
  ergodic measures is not dense in $A_\mu(G,X,\nu)$, when
  $A_\mu(G,X,\nu)$ is endowed with the weak topology.
\end{thm}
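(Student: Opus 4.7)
The plan is to take $G = H \times \Z$ where $H$ is a countable infinite group with property (T), for example $H = SL_3(\Z)$, so that $G$ itself lacks property (T). Let $\mu = \mu_H \otimes \mu_\Z$ be a product of finitely supported generating probability measures on $H$ and $\Z$; then $\mu$ generates $G$. I will exhibit an action $\alpha^* \in A_\mu(G,X,\nu)$ together with a weak-open neighborhood $V$ of $\alpha^*$ containing no ergodic actions. The counterexample action will be measure-preserving, non-ergodic, and rigid under weak perturbations by virtue of the property-(T) subgroup $H$.

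Construct $\alpha^*$ as a measure-preserving $G$-action on a standard nonatomic $(X,\nu)$, with $H$ acting through a non-trivial finite quotient $H \to H/N$ (so that the $H$-invariant subspace $V_* = L^2(X,\nu)^{\alpha^*(H)}$ is finite-dimensional and strictly larger than the constants), and with $\Z$ acting trivially. Then $\alpha^*$ is $\mu$-stationary and visibly non-ergodic. For $\alpha \in A_\mu(G,X,\nu)$ close to $\alpha^*$ in the weak topology, the Koopman operators $U_{\alpha(g),2}$ on $L^2(X,\nu)$ are unitary (the action being nonsingular), and convergence in the weak operator topology on $\Isom(L^2)$ upgrades to strong convergence whenever the limit is itself unitary (via the identity $\|U_n f - Uf\|^2 = 2\|f\|^2 - 2\Re\langle U_n f, Uf\rangle$); hence $U_{\alpha(h),2} \to U_{\alpha^*(h),2}$ strongly for each $h \in H$. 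Property (T) of $H$ then guarantees that the Kazhdan projection onto the $\alpha(H)$-invariant subspace $V_\alpha := L^2(X,\nu)^{\alpha(H)}$ is strongly continuous in $\alpha$, and the spectral gap in $\alpha^*(H)$ forces $V_\alpha$ to remain finite-dimensional of the same dimension as $V_*$ for $\alpha$ in a small neighborhood $V$ of $\alpha^*$. Since $H$ and $\Z$ commute in $G$, the subspace $V_\alpha$ is preserved by $\alpha(\Z)$, giving a unitary representation of $\Z$ on the finite-dimensional Hilbert space $V_\alpha$.

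The main obstacle is the final step: to contradict ergodicity of $\alpha$, one needs a non-constant vector in $V_\alpha$ fixed by $\alpha(\Z)$, which amounts to requiring that $1$ lie in the spectrum of $\alpha(t) \res V_\alpha$ (where $t$ generates $\Z$). At $\alpha^*$ this restricted representation is trivial, but a generic small perturbation of the identity in the compact Lie group $U(V_\alpha)$ has no nonzero fixed vector, so approximate triviality is not sufficient to produce an honest invariant vector. To overcome this obstacle, I plan to refine $\alpha^*$ by imposing additional structure (for instance, requiring $V_*$ to arise as the complexification of an odd-dimensional real representation of $H/N$ on which the $H$-action is orthogonal), so that $\mu$-stationarity together with the representation theory of the finite quotient $H/N$ constrains $\alpha(t) \res V_\alpha$ to an orthogonal operator on a fixed real form, and odd real dimension forces $\pm 1$ in the spectrum, with stationarity selecting $+1$. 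Making this final rigidification step precise, and verifying that the imposed constraint is stable under weak perturbations, is expected to be the most delicate aspect of the argument.
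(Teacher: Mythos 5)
There is a fundamental obstruction to your strategy: your candidate counterexample $\alpha^*$ is measure-preserving, and no measure-preserving action can witness the non-density of ergodic actions. Since $G=H\times\Z$ does not have property (T), the ergodic measure-preserving actions are residual, hence dense, in $A(G,X,\nu)$ (this is the Glasner--Weiss/Glasner--King dichotomy recalled in the introduction of the paper), and the weak topology on $A_\mu(G,X,\nu)$ restricts to the usual topology on $A(G,X,\nu)$. So every weak neighborhood of $\alpha^*$ already contains ergodic \emph{invariant} (a fortiori stationary) actions, and the neighborhood $V$ you seek cannot exist. The difficulty you flag at the end --- that a generic small unitary perturbation of the trivial $\Z$-representation on $V_\alpha$ has no fixed vector --- is not a technical wrinkle to be engineered away by choosing an odd-dimensional real form; it is the visible symptom of this obstruction, and the proposed rigidification cannot survive weak perturbations precisely because ergodic approximants are guaranteed to exist. (There is also a smaller internal inconsistency: if $H$ acts through a finite quotient on a nonatomic $(X,\nu)$ and $\Z$ acts trivially, the $G$-invariant subspace of $L^2(X,\nu)$ is infinite-dimensional, not finite-dimensional as you assume.)

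The paper uses the same group $G=\Gamma\times\Z$ with $\Gamma$ having property (T), but the counterexample action is genuinely non-measure-preserving: the mechanism is the Furstenberg entropy. One first shows (Proposition~\ref{prop:non-T-gap}) that $\mu=\mu_1\times\mu_2$ has an entropy gap --- every ergodic $\mu$-stationary action has Furstenberg entropy either $0$ or at least some $\epsilon>0$, because the entropy splits as $h_{\mu_1}(a_1)+h_{\mu_2}(a_2)$ with the $\Z$-contribution vanishing and the $\Gamma$-contribution gapped by Nevo's theorem. One then shows (Lemma~\ref{lem:entropy-cont}) that Furstenberg entropy is continuous on $A_\mu(G,X,\nu)$ in the weak topology, using the uniform Kaimanovich--Vershik bounds on the Radon--Nikodym derivatives. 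A non-ergodic stationary action with entropy $t\in(0,\epsilon)$ (a weighted disjoint union of a measure-preserving action and the Poisson boundary) then cannot be a limit of ergodic actions. If you want to salvage a spectral-gap argument, you must build an invariant of the action that is both continuous in the weak topology and takes a value on some stationary action that is isolated from its values on ergodic actions; the Furstenberg entropy is exactly such an invariant, whereas ``having a nonconstant $G$-invariant vector'' is not.
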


At this point, it is natural to ask whether, because the ergodic
measures in the example above are not dense, if they must be
meager. However there is no dense $\Aut(X,\nu)$-orbit, and no
Glasner-King type 0-1 law:
\begin{prop}\label{prop:no-01-law}
  If $(G,\mu)$ has a nontrivial Poisson boundary then, under the weak
  topology on $A_\mu(G,X,\nu)$, there does not exist in
  $A_\mu(G,X,\nu)$ a dense $\Aut(X,\nu)$-orbit, and there does exist
  an $\Aut(X,\nu)$-invariant Borel subset that is neither meager
  nor residual.
\end{prop}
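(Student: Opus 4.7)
My strategy for both conclusions is to construct, for each $g \in G$, a continuous and $\Aut(X,\nu)$-conjugation-invariant function $\phi_g \colon A_\mu(G,X,\nu) \to \R$ (in the weak topology) which distinguishes measure-preserving actions from those with nontrivial Radon--Nikodym cocycle, and then to read off both assertions from Baire category applied to the level sets of $\phi_g$.

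Define
$$
\phi_g(\alpha) \;=\; \int_X \Bigl| \tfrac{d (\alpha_g)_* \nu}{d \nu} - 1 \Bigr| \, d\nu.
$$
Conjugation invariance is a direct computation: for $T \in \Aut(X,\nu)$ and $\beta_h := T \alpha_h T^{-1}$, the equality $T_*\nu = \nu$ yields $d(\beta_g)_*\nu/d\nu = (d(\alpha_g)_*\nu/d\nu)\circ T^{-1}$, so the two integrands differ by a $\nu$-preserving map and $\phi_g(\beta) = \phi_g(\alpha)$. For continuity I use the $p=2$, strong-operator description of the weak topology: convergence $\alpha^{(n)} \to \alpha$ gives $U_{\alpha^{(n)}_g,2}(\mathbf{1}) \to U_{\alpha_g,2}(\mathbf{1})$ in $L^2(\nu)$, i.e.\ the square roots of the Radon--Nikodym derivatives converge in $L^2$. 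Since these square roots have $L^2$-norm equal to $1$, the identity $|a^2-b^2| = |a-b|\,|a+b|$ and Cauchy--Schwarz upgrade this to $L^1$-convergence of the derivatives themselves, which gives continuity of $\phi_g$.

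The key input is that some $\phi_g$ is not identically zero on $A_\mu(G,X,\nu)$. Every measure-preserving action has $\phi_g \equiv 0$. In the opposite direction, if every $\mu$-stationary $G$-space $(Y,\eta)$ satisfied $g_*\eta = \eta$ for all $g$, then every Poisson transform $\hat\phi(g) = \int \phi(gy)\,d\eta(y) = \int \phi \, d(g_*\eta) = \int \phi \, d\eta$ would be constant, forcing the Poisson boundary to be trivial by its universal property. Under the hypothesis that $\Pi(G,\mu)$ is nontrivial, the Poisson boundary itself furnishes a stationary $G \cc (B,\nu_B)$ with $g_*\nu_B \neq \nu_B$ for some $g$, so $\phi_g(B,\nu_B) > 0$. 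Taking the product with the trivial $G$-action on $([0,1],\text{Lebesgue})$---which preserves the $\nu$-distribution of the Radon--Nikodym derivative---and transporting by a measure isomorphism realizes this as an element of $A_\mu(G,X,\nu)$ on which $\phi_g > 0$.

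With such a $\phi_g$ in hand, taking values $0$ and some $b > 0$, fix $c \in (0,b)$ and set $U = \{\alpha : \phi_g(\alpha) < c\}$. This is an open, Borel, $\Aut(X,\nu)$-invariant subset of $A_\mu(G,X,\nu)$; both $U$ and the open set $\{\phi_g > c\}$ are nonempty, so by Baire's theorem neither is meager, and hence $U$ is neither meager nor residual, yielding the second assertion. The same $\phi_g$ rules out dense orbits: the $\Aut(X,\nu)$-orbit of any $\alpha$ is contained in the closed level set $\{\phi_g = \phi_g(\alpha)\}$, a proper subset of $A_\mu(G,X,\nu)$, so its closure cannot be all of $A_\mu(G,X,\nu)$. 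The most delicate step is the $L^2$-to-$L^1$ upgrade for the continuity of $\phi_g$; once that is secured, the remainder of the argument is a routine application of Baire category combined with the standard Poisson-transform characterization of triviality of the boundary.
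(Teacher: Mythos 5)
Your argument is correct, and it takes a genuinely different route from the paper's. The paper's proof uses the Furstenberg entropy $h_\mu\colon A_\mu(G,X,\nu)\to\R$, shown to be continuous in the weak topology (Lemma~\ref{lem:entropy-cont}, which rests on the Kaimanovich--Vershik uniform bounds of Proposition~\ref{prop:bounded-rn}), and takes as the invariant set the closed sublevel set $K=\{a : h_\mu(a)\le t\}$ for $0<t<h_\mu(\Pi(G,\mu))$: this is $\Aut(X,\nu)$-invariant, not dense (it misses a realization of the Poisson boundary action), and its complement is not dense (it misses the measure-preserving actions), after which the Baire-category endgame is identical to yours. Your invariant $\phi_g(\alpha)=\|d(\alpha_g)_*\nu/d\nu-1\|_1$ is cruder --- it only detects whether $\nu$ is $\alpha_g$-invariant rather than how far the action is from the boundary in the entropy sense --- but that is all the proposition needs, and it makes the continuity step essentially free: in the paper's description of the weak topology, $a_n\to a$ already entails $L^1$-convergence of the Radon--Nikodym derivatives, so you could even skip the $L^2$-to-$L^1$ upgrade via Cauchy--Schwarz (which is nonetheless correct as you wrote it). The one input common to both proofs is producing a point of $A_\mu(G,X,\nu)$ on which the invariant is positive; your Poisson-transform argument for ``nontrivial boundary $\Rightarrow$ $g_*\nu_B\ne\nu_B$ for some $g$'' and the product-with-$[0,1]$ trick to land in a nonatomic standard space are both sound (and the paper implicitly does the same when it invokes an action measurably conjugate to the Poisson boundary). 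What the paper's entropy-based choice buys is coherence with its surrounding discussion of entropy gaps and Proposition~\ref{prop:gap-not-dense}; what yours buys is independence from Lemma~\ref{lem:entropy-cont} and Proposition~\ref{prop:bounded-rn}.
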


On the other hand, if the Poisson boundary of $(G,\mu)$ is trivial then all
stationary actions are measure-preserving, and there is a dense orbit
and a 0-1 law. In this case, the group $G$ is necessarily amenable. Incidentally, for amenable groups, it
is an open question whether the ergodic actions are dense in $A_\mu(G,X,\nu)$ with respect to the weak topology. 

\subsubsection{The very weak topology}


We prove that the Glasner-King correspondence principle generalizes to
stationary actions with respect to the very weak topology:
\begin{thm}[Correspondence principle]
  \label{thm:generic-equivalence}
  Let $\Pol$ be a perfect Polish space, and let $P$ be a dynamical
  property.  A generic action in $A_\mu(G,X,\nu)$ has $P$ iff a
  generic measure in $\cP_\mu(\Pol^G)$ has $P$, when $A_\mu(G,X,\nu)$
  is endowed with the very weak topology.
\end{thm}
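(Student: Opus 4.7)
I adapt the Glasner--King template~\cite{GK98} to the stationary setting, with the very weak topology supplying precisely the continuity needed to make the template apply. Using Kuratowski's theorem, fix a Borel injection $\phi: X \hookrightarrow \Pol$ (which exists because $\Pol$ is uncountable Polish). For each $T \in A_\mu(G,X,\nu)$ form the $G$-equivariant Borel coding map
\[
  \hat\phi_T: X \to \Pol^G, \qquad \hat\phi_T(x) \;=\; \bigl(\phi(T_{g^{-1}}x)\bigr)_{g\in G},
\]
and set $\Phi_\phi(T) := (\hat\phi_T)_*\nu \in \cP_\mu(\Pol^G)$. Injectivity of $\phi$ ensures $\hat\phi_T$ is a measurable $G$-isomorphism of $(X,\nu,T)$ onto $(\Pol^G,\Phi_\phi(T),\text{shift})$, so $T$ has the dynamical property $P$ if and only if $\Phi_\phi(T)$ does.

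\textbf{Continuity.} The very weak topology on $A_\mu(G,X,\nu)$ is, by definition, the one under which each map $T \mapsto \int h\cdot(U_{T_g,1}f)\, d\nu$ with $h\in L^\infty$, $f\in L^1$ is continuous. Any cylinder integral $\int F\, d\Phi_\phi(T)$, with $F$ depending on finitely many coordinates $g_1,\dots,g_n$, unwinds into a finite sum of products of such expressions built from $\phi$ and $F$; hence $\Phi_\phi$ is continuous into $(\cP_\mu(\Pol^G),\text{weak}^*)$. Moreover, letting $B$ be the Polish space of Borel maps $X \to \Pol$ with the topology of convergence in $\nu$-measure, the joint assignment $(\phi,T) \mapsto \Phi_\phi(T)$ is continuous (convergence in measure of $\phi_n\to\phi$ passes through the nonsingular maps $T_{g^{-1}}$, yielding weak$^*$-convergence of $\Phi_{\phi_n}(T)$).

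\textbf{Category preservation.} A Kuratowski--Ulam argument on $B \times A_\mu(G,X,\nu)$ then shows that for a residual set of codings $\phi$, a Baire-measurable subset $P \subseteq \cP_\mu(\Pol^G)$ is comeager if and only if $\Phi_\phi^{-1}(P)$ is comeager in $A_\mu(G,X,\nu)$. Combined with the isomorphism statement from the first paragraph, this gives both directions of the theorem: a dynamical property is generic in $A_\mu(G,X,\nu)$ iff it is generic in $\cP_\mu(\Pol^G)$.

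\textbf{Main obstacle.} The heart of the matter is the ``dense image'' half of Kuratowski--Ulam: for generic $\phi$, the preimage $\Phi_\phi^{-1}(U)$ of every nonempty weak$^*$-open $U \subseteq \cP_\mu(\Pol^G)$ must be dense in $A_\mu(G,X,\nu)$. In the measure-preserving setting Glasner and King handle this via Rohlin towers and periodic approximations, neither of which has a stationary analogue. The replacement is Theorem~\ref{thm:0-1-dense}: given $\nu_0 \in U$, it supplies an essentially free Poisson-boundary extension $\nu' \in U$; the spatial realization of $G \cc (\Pol^G, \nu')$ on $(X,\nu)$, via any Borel isomorphism $\mathrm{iso}: (X,\nu) \to (\Pol^G,\nu')$ of standard probability spaces, yields an action $T \in A_\mu(G,X,\nu)$. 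A simultaneous perturbation of $\phi$ — chosen so that $\phi$ corresponds to the coordinate projection $\pi_e: \Pol^G \to \Pol$ under $\mathrm{iso}$, which one can arrange because projections can be matched to any prescribed marginal on $\Pol$ — then forces $\Phi_\phi(T) = \nu' \in U$. Making this construction uniform over a residual set of $\phi$ is the delicate but routine Kuratowski--Ulam fibration step, and completes the argument.
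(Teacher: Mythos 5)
Your proposal follows the Glasner--King coding template, but it has two genuine gaps. The most serious is a marginal obstruction: for any \emph{fixed} coding $\phi$, every measure $\Phi_\phi(T)=(\hat\phi_T)_*\nu$ has identity-coordinate marginal equal to the fixed measure $\phi_*\nu$, so the image of $\Phi_\phi$ lies in the closed, nowhere dense fiber $\{\eta \in \cP_\mu(\Pol^G)\,:\,\ell_*\eta = \phi_*\nu\}$. Hence $\Phi_\phi^{-1}(U)$ is empty for many nonempty open $U$ --- for \emph{every} $\phi$, not just a meager set of them --- so the ``dense image for generic $\phi$'' half of your Kuratowski--Ulam step is false as stated, and perturbing $\phi$ to hit a target $\nu'$ breaks the ``fix $\phi$ first'' quantifier structure. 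This is exactly why the paper (following~\cite{GK98}) introduces the Polish group $\H$ of order-preserving homeomorphisms of $[0,1]$, acting freely, transitively and continuously on the fully supported non-atomic marginals: the correspondence is realized not by a single coding but by $E(h,a)=h_*\pi_{a*}\lambda$ on $\H\times A_\mu(G,\bB,\lambda)$, which is a homeomorphism onto a residual subset of $\bcP_\mu(\bB^G)$, and the category transfer then comes from the elementary fact that $P_1\times\H$ is Baire/residual/meager iff $P_1$ is, not from a Kuratowski--Ulam argument over a space of codings. Relatedly, you have misidentified the key density input: Theorem~\ref{thm:0-1-dense} plays no role in the correspondence principle (it is stated for compact $W$ and feeds into Theorems~\ref{thm:generic-poisson-boundary} and~\ref{thm:weak-rohlin}); density of the graph measures in the fixed-marginal fiber $\cP_\mu^\lambda(\bB^G)$ is proved directly (Claim~\ref{clm:F-dense}) by spatially realizing the shift on $(\bB^G,\nu)$ back on $(\bB,\lambda)$ compatibly with finer and finer clopen partitions.

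The second gap is the continuity argument. For a cylinder function $F$ of $n$ coordinates, $\int F\,d\Phi_\phi(T)=\int F\bigl(\phi(T_{g_1^{-1}}x),\dots,\phi(T_{g_n^{-1}}x)\bigr)\,d\nu(x)$ is a single integral of a product of compositions; it does not ``unwind into a finite sum of products'' of the two-point pairings $\langle U_{T(g),1}f,h\rangle$ that the very weak topology controls. Continuity does hold, but for a nontrivial reason: very weak convergence gives $\nu(S\cap T_n(g)S')\to\nu(S\cap T(g)S')$ for all $S$, and since the weak limit of the indicators is again an indicator, weak convergence plus convergence of norms upgrades to $\nu\bigl(T_n(g)S'\,\triangle\,T(g)S'\bigr)\to 0$, which is what controls the higher cylinder probabilities of the graph measures. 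Making this interact correctly with the weak* topology of an arbitrary perfect Polish $\Pol$ through a merely Borel injection $\phi$ is a further problem; the paper sidesteps it by first proving (Proposition~\ref{claim:residual}) that one may replace $\Pol$ by the Baire space $\bB$, where clopen cylinder partitions both generate the weak* topology and are respected by the coding. Your outline has the right flavor, but it omits the two structural devices --- the reduction to $\bB$ and the $\H$-trivialization of the marginal fibration --- without which the argument does not close.
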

It follows from Theorems~\ref{thm:generic-poisson-boundary}
and~\ref{thm:generic-equivalence} that under this topology, a generic
action is an essentially free extension of the Poisson boundary, and
is isomorphic to the Poisson boundary when the action on it is
essentially free. Another interesting consequence of
Theorem~\ref{thm:generic-equivalence} is that if $\Pol_1$ and $\Pol_2$
are perfect Polish spaces, then a generic measure in $\cP(\Pol_1^G)$
has a dynamical property iff a generic measure in $\cP_\mu(\Pol_2^G)$
has this property. We use this in the proof of
Theorem~\ref{thm:generic-poisson-boundary}. 

We prove that under the very weak topology, $A_\mu(G,X,\nu)$ does have
a dense $\Aut(X,\nu)$-orbit.
\begin{thm}\label{thm:weak-rohlin}
  For any discrete group $G$ with a generating measure $\mu$, there
  exists in $A_\mu(G,X,\nu)$ a dense $\Aut(X,\nu)$-orbit, with respect
  to the very weak topology.
\end{thm}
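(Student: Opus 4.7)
The plan is to exhibit a specific action $\alpha_0 \in A_\mu(G,X,\nu)$ whose $\Aut(X,\nu)$-conjugacy orbit is dense in the very weak topology. Take $\alpha_0$ to be a realization on $(X,\nu)$ of the product action $G \cc \Pi(G,\mu) \times [0,1]^G$ (with $[0,1]^G$ carrying the Bernoulli shift), transported via any measure-space isomorphism; this is an ergodic, essentially free extension of the Poisson boundary, and Theorem~\ref{thm:0-1-dense} applies to it.

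To prove density of the orbit, fix a target action $\beta \in A_\mu(G,X,\nu)$ and a basic very weak neighborhood $U$ of $\beta$. Since the very weak topology is induced by the weak operator topology on $\Isom(L^1(X,\nu))$, the set $U$ is specified by finitely many group elements $g_1,\ldots,g_n$, $L^1$ test functions $f_i$, $L^\infty$ test functions $\phi_i$, and a tolerance $\epsilon>0$. By density of simple functions, one can refine a common finite measurable partition $\mathcal P$ of $X$ so that matching the joint $\nu$-law of the $\mathcal P$-labels of $x$ and its $\beta(g_i)$-translates to within a smaller tolerance $\delta=\delta(\epsilon)$ forces the original test pairings to match within $\epsilon$.

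Coding $\beta$ into $\cP_\mu(\mathcal P^G)$ via the $G$-equivariant map $\Phi_\beta: x \mapsto (\ell_{\mathcal P}(\beta(g^{-1})x))_g$ turns the neighborhood condition into a constraint on finitely many coordinates of $(\Phi_\beta)_*\nu \in \cP_\mu(\mathcal P^G)$. Applying Theorem~\ref{thm:0-1-dense} with the finite (hence compact) alphabet $W := \mathcal P$, the set of $G$-factors of $\alpha_0$ is dense in $\cP_\mu(\mathcal P^G)$, so pick such a factor $\pi: (X,\nu) \to (\mathcal P^G, \nu_\pi)$ with $\pi_*\nu$ matching $(\Phi_\beta)_*\nu$ to within $\delta$ on the relevant coordinates. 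Projecting onto the identity coordinate of $\pi$ gives a $\mathcal P$-valued function $\bar\pi$ on $X$, inducing a partition $\mathcal P'$ whose cell masses agree with those of $\mathcal P$; nonatomicity of $\nu$ then yields $T \in \Aut(X,\nu)$ with $T(\mathcal P') = \mathcal P$, and by construction $T\alpha_0 T^{-1}$ matches $\beta$ on the required pairings, so $T\alpha_0 T^{-1} \in U$.

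The main obstacle is the last step, converting a measure-theoretic density statement for factors of $\alpha_0$ into a topological density statement for conjugates in the action space. The subtle points are (i) choosing $\mathcal P$ fine enough that matching joint $\mathcal P$-distributions controls the original $L^1$-$L^\infty$ pairings, which requires uniformity in the Radon--Nikodym cocycle of $\beta$ and is handled by density of simple functions together with nonsingularity of $\beta$; and (ii) ensuring that the measure-preserving isomorphism $T$ identifying $\mathcal P'$ with $\mathcal P$ can be chosen so that $T\alpha_0T^{-1}$ simultaneously approximates $\beta$ on \emph{all} the specified test functions, which is arranged by refining $\mathcal P$ before the coding step to include enough structure to reconstruct the $f_i$ and $\phi_i$ up to $L^1$-error $\epsilon/2$.
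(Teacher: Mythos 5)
Your proposal follows essentially the same route as the paper's proof: fix an ergodic, essentially free extension of the Poisson boundary, reduce a basic very weak neighborhood of the target action to matching the quantities $\nu(\beta(g)P_i\cap P_j)$ for a finite partition and finitely many group elements, code into the finite-alphabet shift $\{1,\dots,n\}^G$, apply Theorem~\ref{thm:0-1-dense} to find a nearby factor of $\alpha_0$, and conjugate by a measure-preserving map matching the pulled-back partition to $\mathcal P$. The only imprecision is your claim that the cell masses of $\mathcal P'$ \emph{agree} with those of $\mathcal P$ --- they only agree approximately, so (as the paper does) one must first perturb $\mathcal P'$ to a partition with exactly matching masses before the transformation $T$ can be chosen; this is a routine fix that does not affect the argument.
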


A consequence is a Glasner-King type 0-1 law.
\begin{cor}[0-1 law for stationary actions]\label{cor:0-1law}
  Every $\Aut(X,\nu)$-invariant Baire measurable subset of
  $A_\mu(G,X,\nu)$ is either meager or residual in the very weak
  topology.
\end{cor}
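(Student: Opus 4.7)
The plan is to deduce Corollary~\ref{cor:0-1law} from Theorem~\ref{thm:weak-rohlin} by the classical argument that a continuous action of a Polish group on a Polish space with a dense orbit forces a 0-1 law for invariant sets with the Baire property. No further machinery beyond the weak Rohlin property should be needed; this mirrors the passage from the weak Rohlin property to the 0-1 law in \cite{GK98} for measure-preserving actions, recalled in the introduction.

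First I would fix, via Theorem~\ref{thm:weak-rohlin}, a point $\alpha_0 \in A_\mu(G,X,\nu)$ whose $\Aut(X,\nu)$-orbit is dense in the very weak topology. I would then invoke the fact, noted in the introduction, that the conjugation action of $\Aut(X,\nu)$ on $A_\mu(G,X,\nu)$ is continuous in the very weak topology; applying this fact also to $h^{-1}$, each $h \in \Aut(X,\nu)$ acts as a homeomorphism of $A_\mu(G,X,\nu)$.

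Next, let $B \subseteq A_\mu(G,X,\nu)$ be $\Aut(X,\nu)$-invariant and Baire-measurable, and suppose for contradiction that neither $B$ nor its complement is meager. By the Baire property there exist nonempty open sets $U, V \subseteq A_\mu(G,X,\nu)$ such that $U \setminus B$ and $V \cap B$ are both meager. Using density of the $\Aut(X,\nu)$-orbit of $\alpha_0$, pick $h_1, h_2 \in \Aut(X,\nu)$ with $h_1 \cdot \alpha_0 \in U$ and $h_2 \cdot \alpha_0 \in V$, and set $h = h_2 h_1^{-1}$. Then $h \cdot U$ is open and contains $h_2 \cdot \alpha_0$, so $W := (h \cdot U) \cap V$ is a nonempty open subset of $A_\mu(G,X,\nu)$. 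Since $h$ is a homeomorphism carrying $B$ to itself and meager sets to meager sets, $(h \cdot U) \setminus B = h \cdot (U \setminus B)$ is meager, and therefore $W \setminus B$ is meager. On the other hand, $W \cap B \subseteq V \cap B$ is meager. This contradicts the Baire category theorem applied to the nonempty open set $W$ in the Polish space $A_\mu(G,X,\nu)$, so $B$ is either meager or residual.

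I do not anticipate any real technical obstacle in this step; all of the substance of the corollary is carried by Theorem~\ref{thm:weak-rohlin}, and the 0-1 dichotomy is the standard topological consequence of the existence of a dense orbit for a continuous Polish group action.
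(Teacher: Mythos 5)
Your proposal is correct and follows essentially the same route as the paper: the paper deduces the corollary from Theorem~\ref{thm:weak-rohlin}, the continuity of the conjugation action, and the 0-1 Lemma of Glasner--King, which it cites from \cite{GK98} rather than reproving. Your argument simply writes out the standard proof of that 0-1 Lemma (localization via the Baire property plus a translate of $U$ meeting $V$), and it is sound.
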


{\bf Acknowledgments}. We are grateful to Ita\"\i~Ben Yaacov, Julien
Melleray and Todor Tsankov for helping us understand the different
topologies on the group $\Aut^*(X,\lambda)$ of nonsingular
transformations of a Lebesgue space. Part of this paper was written
while all three authors attended the trimester program ``Random Walks
and Asymptotic Geometry of Groups'' at the Henri Poincar\'e Institute
in Paris. We are grateful to the Institute for its support.

\section{Definitions and preliminaries}
\label{sec:prelim}
\subsection{Nonsingular and stationary measures}
Let $G \curvearrowright Y$ be a continuous action of a countable group
on a compact Hausdorff space. A particularly interesting case is when
$Y = W^G$ for some compact metric space $W$ with cardinality $>1$, the
topology is the product topology, and $G$ acts by left translations.
We denote by $\cP(Y)$ the space of Borel probability measures on $Y$,
equipped with the weak* topology. This is a compact Polish space.

A subspace of $\cP(Y)$ is $\cP^*_G(Y)$, the space of $G$
quasi-invariant measures on $Y$. Those are the measures $\nu \in
\cP(Y)$ such that $g_*\nu$ and $\nu$ are equivalent - that is,
mutually absolutely continuous - for all $g \in G$.

A probability measure $\mu$ on $G$ is said to be {\em generating} if
its support generates $G$ as a semigroup.  Given such a measure $\mu$,
a subspace of $\cP^*_G(Y)$ is the closed set of $\mu$-stationary
measures $\cP_\mu(Y)$. Those are the measures that
satisfy~\eqref{eq:stationarity}. This is also a Polish space.

Finally, $\cP_G(Y) \subseteq \cP_\mu(Y)$ is the space of $G$-invariant
measures. This series of inclusions is summarized as follows:
\begin{align*}
  \cP_G(Y) \, \subseteq \, \cP_\mu(Y) \, \subseteq \, \cP_G^*(Y) \, \subseteq \, \cP(Y).
\end{align*}

\subsubsection{The Poisson boundary}\label{sec:Poisson}
The Poisson boundary $\Pi(G,\mu)$ is an important measurable
$\mu$-stationary action on an abstract probability space. It was
introduced by Furstenberg~\cite{furstenberg1971random} in the context
of Lie groups, or, more generally, locally compact second countable
groups; we will define it for countable groups.

So let $G$ be a countable group and $\mu$ a probability measure on $G$
whose support generates $G$ as a semigroup. Let $\bP_\mu$ be the
push-forward of $\mu^\N$ under the map $(g_1,g_2,g_3,\ldots) \mapsto
(g_1,g_1g_2,g_1g_2g_3,\ldots)$. The space $(G^\N,\bP_\mu)$ is the
space of random walks on $G$ with $\mu$-increments.

Consider the natural shift action on $G^\N$ given by
$(g_1,g_2,g_3,\ldots) \mapsto (g_2,g_3,\ldots)$. The Poisson boundary
$\Pi(G,\mu)$ is the Mackey point realization~\cite{mackey1962point} of
the shift-invariant sigma-algebra of $(G^\N,\bP_\mu)$, and can be
thought of as the set of possible asymptotic behaviors of the random
walk. We refer the reader to Furman~\cite{furman2002random} for an
in-depth discussion.

An important property of the Poisson boundary is that the $G$-action
on it is amenable, in Zimmer's sense~\cite{zimmer1978amenable}. This
fact is an important ingredient in the proof of
Theorem~\ref{thm:generic-poisson-boundary}, and we use it in the proof
of Lemma~\ref{lem:rohlin}. Another important property, which we
discuss in the next section, is that the Furstenberg entropy of the
Poisson boundary is maximal.

Let $G \cc (B,\beta)$ be the Poisson boundary of $(G,\mu)$. We call
measure $\nu \in \cP_\mu(Y)$ {\em Poisson} if $G \cc (Y,\nu)$ is
measurably conjugate to $G \cc (B,\beta)$. Let $\cP^{Poisson}_\mu(Y)
\subset \cP_\mu(Y)$ denote the subset of Poisson measures. A measure
$\nu \in \cP_\mu(Y)$ is an extension of the Poisson boundary if there
exists a $G$-equivariant factor $\pi \colon Y \to B$ such that
$\pi_*\nu = \beta$.

\subsubsection{Furstenberg entropy}

The {\em Furstenberg entropy}~\cite{furstenberg1963noncommuting} of a
$\mu$-stationary measure $\nu \in \cP_\mu(Y)$ is given by
\begin{align*}
  h_\mu(Y,\nu) = \sum_{g \in G}\mu(g)\int_Y-\log\frac{d\nu}{dg_*\nu}(y)dg_*\nu(y).
\end{align*}
We also refer to $h_\mu(\cdot)$ as {\em $\mu$-entropy}.

Furstenberg entropy is an important measure-conjugacy invariant of
stationary actions; for example, when the Shannon entropy of $\mu$ is
finite, then the only proximal stationary space (i.e., a factor of the
Poisson boundary) with maximal Furstenberg entropy is the Poisson
boundary~\cite{kaimanovich1983random}. In general (i.e., even when the
entropy of $\mu$ is infinite), every stationary space has Furstenberg
entropy that is at most that of the Poisson boundary, and the latter
is bounded by the Shannon entropy of $\mu$. Because of this fact we
say that a stationary action has {\em maximum $\mu$-entropy} if its
$\mu$-entropy equals the $\mu$-entropy of the Poisson boundary.

\section{$G_\delta$ subsets of the space of measures} 
Let $Y$ be a compact metric space on which $G$ acts by homeomorphisms.
Recall that a measure $\nu \in \cP_\mu(Y)$ is
\begin{itemize}
\item {\em ergodic} if for every $G$-invariant measurable subset $E \subset Y$, $\nu(E) \in \{0,1\}$,
\item {\em maximal} if the $\mu$-entropy of $G \cc (Y,\nu)$ is the same as the $\mu$-entropy of $G$ acting on the Poisson boundary,
\item {\em proximal} if $G \cc (Y,\nu)$ is a measurable factor of the Poisson boundary action $G \cc \Pi(G,\mu)$,
\item {\em Poisson} if $G \cc (Y,\nu)$ is measurably-conjugate to
  the Poisson boundary action $G \cc \Pi(G,\mu)$,
\item {\em essentially free} if for each $g \in G$, the set of $G$
  fixed points $\{y \in Y \,:\, gy=y\}$ has $\nu$-measure zero.
\end{itemize}

The purpose of this section is to prove:

\begin{thm}
  \label{thm:G-deltas}
  Let 
  \begin{itemize}
  \item $\cP^e_\mu(Y)\subset \cP_\mu(Y)$ denote the subset of ergodic measures,
  \item $\cP^{max}_\mu(Y) \subset \cP_\mu(Y)$ denote the subset of
    maximum $\mu$-entropy measures,
  \item $\cP^{proximal}_\mu(Y) \subset \cP_\mu(Y)$ denote the subset
    of proximal measures,
  \item $\cP^{free}_\mu(Y) \subset \cP_\mu(Y)$ denote the subset of
    essentially free measures,
  \item $\cP^{Poisson}_\mu(Y) \subset \cP_\mu(Y)$ denote the subset of
    Poisson measures.
  \end{itemize}
  Then $\cP^e_\mu(Y),\cP^{max}_\mu(Y)$, $\cP^{proximal}_\mu(Y)$ and
  $\cP^{free}_\mu(Y)$ are $G_\delta$ subsets of $\cP_\mu(Y)$.

  If the Shannon entropy $H(\mu)<\infty$ then $\cP^{Poisson}_\mu(Y)$
  is also a $G_\delta$-subset of $\cP_\mu(Y)$.
\end{thm}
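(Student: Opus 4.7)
The plan is to treat each of the five sets as the vanishing locus of a suitable semi-continuous functional on $\cP_\mu(Y)$, possibly followed by a countable intersection. Three cases are short. For $\cP^e_\mu(Y)$: $\cP_\mu(Y)$ is a compact metrizable Choquet simplex whose extreme points are exactly the ergodic measures, and extreme points of any compact metrizable convex set form a $G_\delta$. Indeed, non-extreme $\nu$ admit a decomposition $\nu = \tfrac12(\nu_1+\nu_2)$ with $d(\nu_1,\nu_2) \geq 1/n$ for some $n$; the set of such $\nu$ is the continuous image of a compact set, hence closed. For $\cP^{free}_\mu(Y)$: $\Fix(g) = \{y : gy = y\}$ is closed, so $\nu \mapsto \nu(\Fix(g))$ is upper semi-continuous in the weak* topology, making $\{\nu : \nu(\Fix(g)) = 0\} = \bigcap_n \{\nu : \nu(\Fix(g)) < 1/n\}$ a $G_\delta$; intersecting over the countable group $G$ preserves this. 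For $\cP^{max}_\mu(Y)$: write $h_\mu(\nu) = \sum_g \mu(g)\, H(g_*\nu \,\|\, \nu)$ with $H$ the Kullback--Leibler divergence, jointly lower semi-continuous in weak* by the Donsker--Varadhan variational formula; combined with continuity of $\nu \mapsto g_*\nu$ and non-negativity of the summands, this makes $h_\mu$ lower semi-continuous. Consequently $\cP^{max}_\mu(Y) = \bigcap_n \{h_\mu > h_\mu(\Pi(G,\mu)) - 1/n\}$, or $\bigcap_n \{h_\mu > n\}$ if $h_\mu(\Pi(G,\mu)) = \infty$, is $G_\delta$.

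The main work is $\cP^{proximal}_\mu(Y)$. Here I would use Furstenberg's limit measures: for any $\nu \in \cP_\mu(Y)$ and $f \in C(Y)$, stationarity makes $M_n(f,\nu) := \int f\, d(X_n)_*\nu$ a bounded $\bP_\mu$-martingale on the path space, where $X_n = g_1\cdots g_n$ is the random walk. Bounded martingale convergence therefore gives that $(X_n)_*\nu$ converges weak*, $\bP_\mu$-a.s., to a random probability measure $\nu_\omega$ on $Y$, and the classical Furstenberg--Jaworski characterization of factors of the Poisson boundary identifies proximality of $\nu$ with $\nu_\omega$ being $\bP_\mu$-a.s.\ a Dirac mass. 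Fix a countable dense family $\{f_i\} \subset C(Y)$ with $\|f_i\|_\infty \leq 1$, and set
\[
V_n(\nu, f_i) \;=\; \E\!\left[\operatorname{Var}_{(X_n)_*\nu}(f_i)\right] \;=\; \int f_i^2\, d\nu \;-\; \E\!\left[M_n(f_i,\nu)^2\right],
\]
the second equality coming from stationarity applied to $\int f_i^2 \, d(X_n)_*\nu$. Since $M_n(f_i,\cdot)$ is a bounded martingale, $\E[M_n^2]$ is non-decreasing in $n$, so $V_n(\cdot, f_i)$ is non-increasing; and each $V_n(\cdot, f_i) = \sum_{g} \mu^{*n}(g)\operatorname{Var}_{g_*\nu}(f_i)$ is a uniformly (in $\nu$) convergent sum of weak*-continuous functionals indexed by words of length $n$, hence continuous on $\cP_\mu(Y)$. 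Therefore $V_\infty(\cdot, f_i) := \inf_n V_n(\cdot, f_i)$ is upper semi-continuous, and proximality of $\nu$ is equivalent to $V_\infty(\nu, f_i) = 0$ for every $i$. This presents $\cP^{proximal}_\mu(Y) = \bigcap_{i,n}\{\nu : V_\infty(\nu, f_i) < 1/n\}$ as a $G_\delta$ set.

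For $\cP^{Poisson}_\mu(Y)$ under the hypothesis $H(\mu) < \infty$, I would invoke the Kaimanovich entropy criterion cited in the excerpt: for finite-entropy $\mu$, the only proximal stationary space of maximal $\mu$-entropy is the Poisson boundary itself; equivalently, every proper measurable factor of $\Pi(G,\mu)$ has strictly smaller Furstenberg entropy. Consequently if $\nu$ is both proximal and of maximal $\mu$-entropy then the factor map $\Pi(G,\mu) \to (Y,\nu)$ must be a measure-theoretic isomorphism, so $\nu$ is Poisson; the converse is immediate. This yields $\cP^{Poisson}_\mu(Y) = \cP^{proximal}_\mu(Y) \cap \cP^{max}_\mu(Y)$, an intersection of two $G_\delta$'s.

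The main obstacle is the proximal step: one must encode the qualitative ``factor of the Poisson boundary'' condition as the vanishing of a semi-continuous functional. The stationarity-driven martingale structure, and in particular the monotonicity $V_{n+1} \leq V_n$ that it produces, is what promotes the naive Borel formulation to an upper semi-continuous one suitable for a $G_\delta$ description; without such monotonicity one would only obtain a Borel set.
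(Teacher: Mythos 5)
Your treatment of $\cP^e_\mu(Y)$, $\cP^{free}_\mu(Y)$, $\cP^{max}_\mu(Y)$ and $\cP^{Poisson}_\mu(Y)$ matches the paper's proof almost verbatim: extreme points of a metrizable Choquet simplex are $G_\delta$, the portmanteau theorem handles the closed fixed-point sets, lower semi-continuity of $h_\mu$ (the paper cites a result of Poletsky rather than arguing via Donsker--Varadhan, but the content is the same) handles maximality, and the Kaimanovich identity $\cP^{Poisson}_\mu = \cP^{max}_\mu \cap \cP^{proximal}_\mu$ under $H(\mu)<\infty$ finishes the Poisson case. The genuine divergence is in the proximal case, and your argument there is correct but quite different from the paper's. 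The paper constructs an affine homeomorphism $\alpha$ from $\cP_\mu(Y)$ onto a space $\cP_\Z(G^\Z\times Y\,|\,\cmu^\Z)$ of $\Z$-invariant measures for the random-walk skew product, shows that $\nu$ is proximal iff $\alpha(\nu)$ is a graph measure $\int\delta_\omega\times\delta_{f(\omega)}\,d\cmu^\Z(\omega)$, identifies the graph measures as the extreme points of the larger convex set $\cP(G^\Z\times Y\,|\,\cmu^\Z)$ intersected with the closed set $\cP_\Z(G^\Z\times Y\,|\,\cmu^\Z)$, and then reuses the extreme-point $G_\delta$ lemma. Your route instead quantifies ``$\nu_\omega$ is a.s.\ Dirac'' directly through the functionals $V_n(\nu,f_i)=\sum_g\mu^{*n}(g)\operatorname{Var}_{g_*\nu}(f_i)$, which are weak*-continuous (uniformly convergent sums of continuous terms), monotone in $n$ by the $L^2$-martingale property, and whose infimum $V_\infty(\nu,f_i)=\E[\operatorname{Var}_{\nu_\omega}(f_i)]$ is therefore upper semi-continuous and vanishes for all $i$ exactly when $\nu$ is proximal. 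Both arguments rest on the same Furstenberg characterization of proximality via a.s.\ Dirac conditional measures; yours is shorter and self-contained, bypassing the skew-product homeomorphism (and the appendix on relative weak* topologies that supports it), while the paper's is softer and yields the homeomorphism $\alpha$ as a reusable structural statement. The only point worth making explicit in your write-up is the standard fact that a Borel probability measure on a compact metric space is a point mass iff $\operatorname{Var}(f_i)=0$ for all $i$ in a sup-norm-dense countable family of $C(Y)$, which is what lets the countable intersection over $i$ capture proximality.
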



To get started, we prove that $\cP^e_\mu(Y)$, $\cP^{max}_\mu(Y)$ and
$\cP^{free}_\mu(Y)$ are $G_\delta$ subsets after the next (standard)
lemma.
\begin{lem}
  \label{thm:ex-G-delta}
  Let $\Delta$ be a Choquet simplex and $\Delta^e \subset \Delta$
  denote the subset of extreme points. Then $\Delta^e$ is a $G_\delta$
  subset of $\Delta$.
\end{lem}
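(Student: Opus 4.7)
The plan is to show that the complement of $\Delta^e$, namely the set of non-extreme points, is $F_\sigma$; taking complements then yields that $\Delta^e$ is $G_\delta$. In the intended applications $\Delta = \cP_\mu(Y)$ sits inside $\cP(Y)$ for a compact metric space $Y$, and the weak* topology on $\cP(Y)$ is compact and metrizable, so $\Delta$ is itself a compact metrizable convex set. I will work with a fixed metric $d$ on $\Delta$ that is compatible with the given topology.

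The characterization I will use is the standard one: a point $x\in\Delta$ is non-extreme if and only if there exist $y,z\in\Delta$ with $y\neq z$ and $x=\tfrac12(y+z)$. For each integer $n\geq 1$, define
\[
F_n \;=\; \Bigl\{ \tfrac12(y+z) \csuchthat y,z\in\Delta,\ d(y,z)\geq 1/n \Bigr\}.
\]
Then $F_n$ is the image of the closed (hence compact, by compactness of $\Delta\times\Delta$) set $\{(y,z)\in\Delta\times\Delta : d(y,z)\geq 1/n\}$ under the continuous map $(y,z)\mapsto \tfrac12(y+z)$, so each $F_n$ is compact and in particular closed in $\Delta$. The set of non-extreme points is exactly $\bigcup_{n\geq 1} F_n$, which is therefore $F_\sigma$, so $\Delta^e=\Delta\setminus\bigcup_{n} F_n$ is $G_\delta$, as required.

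There is essentially no obstacle here; the only point to be slightly careful about is the availability of a compatible metric on $\Delta$, which is automatic in the applications to $\cP_\mu(Y)$ where $Y$ is a compact metric space (so that $\cP(Y)$ is Polish in the weak* topology and $\cP_\mu(Y)$ is a closed subset). If one wanted to state the lemma for abstract Choquet simplices without metrizability, a separate argument using the fact that the affine continuous functions separate points would be needed, but within the scope of this paper the metric argument suffices.
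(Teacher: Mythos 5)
Your proof is correct and follows essentially the same route as the paper: the paper's $\Delta_n$ is exactly your $F_n$, and the paper likewise concludes $\Delta^e=\bigcap_n(\Delta\setminus\Delta_n)$ after asserting each $\Delta_n$ is closed. You merely spell out the closedness argument (continuous image of a compact set) and the metrizability caveat, both of which the paper leaves implicit.
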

\begin{proof}
  Let $d$ denote a continuous metric on $\Delta$. For each integer
  $n>0$ let $\Delta_n$ denote the set of all $x \in \Delta$ such that
  there exists $y,z \in \Delta$ with $d(y,z) \ge 1/n$ such that $x =
  (1/2)(y+z)$. Then $\Delta_n$ is closed in $\Delta$ and $\Delta^e =
  \cap_{n=1}^\infty \Delta\setminus \Delta_n$.
\end{proof}

The proof of the following corollary is straightforward, and involves
the application of known results from the measure-preserving case to
the stationary case.
\begin{cor}\label{cor:ergodic}
  $\cP^e_\mu(Y)$, $\cP^{max}_\mu(Y)$ and $\cP^{free}_\mu(Y)$ are $G_\delta$ subsets. 
\end{cor}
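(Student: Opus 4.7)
The plan is to handle each of the three subsets separately, in each case applying soft tools that are already standard in the measure-preserving setting.

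For $\cP^e_\mu(Y)$, I would apply Lemma \ref{thm:ex-G-delta} directly. As noted earlier in the paper, $\cP_\mu(Y)$ is a Choquet simplex whose extreme points are precisely the ergodic stationary measures, so the $G_\delta$ conclusion is immediate.

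For $\cP^{free}_\mu(Y)$, I would use the Portmanteau theorem. For each $g \in G \setminus \{e\}$, the fixed-point set $\Fix(g) = \{y \in Y : gy = y\}$ is closed in the Hausdorff space $Y$ since $G$ acts by homeomorphisms. Therefore $\nu \mapsto \nu(\Fix(g))$ is upper semi-continuous on $\cP(Y)$, so for every $n \geq 1$ the set $\{\nu \in \cP_\mu(Y) : \nu(\Fix(g)) < 1/n\}$ is open, and
$$\cP^{free}_\mu(Y) \;=\; \bigcap_{g \neq e}\,\bigcap_{n \geq 1}\bigl\{\nu \in \cP_\mu(Y) \;:\; \nu(\Fix(g)) < 1/n\bigr\}$$
exhibits $\cP^{free}_\mu(Y)$ as a countable intersection of open sets.

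For $\cP^{max}_\mu(Y)$, the key step is lower semi-continuity of the Furstenberg entropy $h_\mu : \cP_\mu(Y) \to [0,\infty]$. After the change of variables $x = gy$ in the defining integral, one can rewrite
$$h_\mu(\nu) \;=\; \sum_{g \in G} \mu(g)\, H\bigl(\nu \,\big\|\, g^{-1}_*\nu\bigr),$$
where $H(\cdot\,\|\,\cdot)$ denotes relative entropy. Since relative entropy is jointly lower semi-continuous on $\cP(Y) \times \cP(Y)$ in the weak* topology and $\nu \mapsto g^{-1}_*\nu$ is weak* continuous, each summand is a lower semi-continuous nonnegative function of $\nu$; Fatou's lemma, applied to the counting measure on $G$ weighted by $\mu$, then upgrades this to lower semi-continuity of the full sum. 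Once this is established, writing
$$\cP^{max}_\mu(Y) \;=\; \bigcap_{n \geq 1}\bigl\{\nu \in \cP_\mu(Y) \;:\; h_\mu(\nu) > h_{\max} - 1/n\bigr\}$$
(with the obvious modification $\bigcap_n \{h_\mu > n\}$ in the case $h_{\max} = \infty$) displays $\cP^{max}_\mu(Y)$ as a countable intersection of open sets.

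The main obstacle is the lower semi-continuity of $h_\mu$; the ergodic and essentially-free cases reduce immediately to Lemma \ref{thm:ex-G-delta} and Portmanteau, respectively. Even the entropy step is standard once the Radon--Nikodym derivatives are rewritten as relative entropies against pushforwards, since only the \emph{nonnegativity} of each summand is needed for the Fatou step, and no finiteness assumption on $H(\mu)$ is required.
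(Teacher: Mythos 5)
Your proof is correct and follows the same route as the paper's: Lemma~\ref{thm:ex-G-delta} for the ergodic measures, the portmanteau theorem applied to the closed fixed-point sets for the essentially free measures, and lower semi-continuity of $\nu \mapsto h_\mu(Y,\nu)$ for the maximal measures. The only difference is that where the paper simply cites \cite[Theorem 1]{Po75} for the lower semi-continuity of the Furstenberg entropy, you supply a self-contained argument by rewriting each summand as a relative entropy $H(\nu\,\|\,g^{-1}_*\nu)$ and using its joint weak* lower semi-continuity together with nonnegativity of the terms; this is a valid substitute for the citation.
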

\begin{proof}
  By the ergodic decomposition theorem for stationary measures,
  $\cP^e_\mu(Y)$ is the set of extreme points of
  $\cP_\mu(Y)$~\cite{bader2006factor}. So the previous lemma implies
  $\cP^e_\mu(Y)$ is a $G_\delta$. By~\cite[Theorem 1]{Po75}, the map
  $\nu \mapsto h_\mu(Y,\nu)$ is lower semi-continuous on
  $\cP_\mu(Y)$. This implies $\cP^{max}_\mu(Y)$ is a $G_\delta$. To
  see that $\cP^{free}_\mu(Y)$ is a $G$-delta, note that for each $g
  \in G$, the set of $g$ fixed points $F_g = \{y \in Y\,:\,gy=y\}$ is
  closed, by the continuity of the $G$-action. So the portmanteau
  Theorem implies the set $M_{g,n} = \{\nu \in \cP_\mu(Y) \,:\,
  \nu(F_g) < 1/n\}$ is open for all $n>1$. Since $\cP^{free}_\mu(Y) =
  \cap_{g \in G\setminus \{e\}} \cap_{n=1}^\infty M_g$, it is a
  $G_\delta$.
\end{proof}

\subsection{$\Z$-invariant measures from stationary measures}
\label{invariant_stationary_measures}

In order to prove that proximal measures form a $G_\delta$ subset of
$\cP_\mu(Y)$, we obtain an affine homeomorphism between $\cP_\mu(Y)$
and a certain space of $\Z$-invariant measures. This idea is inspired
by \cite{furstenberg2009stationary}, and parts of what follows appear
in~\cite{furman2002random} (see, e.g., Proposition 1.3 there, as well
as section 2.3). We never-the-less provide complete proofs, for the
reader's convenience.

Given a measure $\mu$ on $G$, let $\cmu$ be the measure on $G$ given
by $\cmu(A) = \mu(\{g \in G\,:\,g^{-1} \in A\})$. To begin, we let $G$
have the discrete topology, $G^\Z$ the product topology and $\cP(G^\Z
\times Y)$ the weak* topology. Let $\cP(G^\Z\times Y | \cmu^\Z)$
denote the set of all measures $\lambda \in \cP(G^\Z \times Y)$ whose
projection to the first coordinate is $\cmu^\Z$. We view $\cP(G^\Z \times Y| \cmu^\Z)$ as a subspace of $\cP(G^\Z \times Y)$ with the subspace topology. In Appendix \ref{sec:weak} we show that this topology on $\cP(G^\Z \times Y| \cmu^\Z)$ is independent of the choice of topology on $G^\Z$.

We will show that $\cP_\mu(Y)$ is affinely homeomorphic with a
subspace of $\cP(G^\Z\times Y | \cmu^\Z)$. To define this subspace,
let $r:\Z \times G^\Z \to G$ be the random walk cocycle:
\begin{displaymath}
  r(n,\omega)= \left\{ \begin{array}{cc}
      (\omega_1\cdots \omega_n)^{-1} & n \ge 1 \\
      1_G & n = 0 \\ 
      \omega_{n+1}\cdots \omega_0 & n <0
    \end{array}\right.
\end{displaymath}
Note that $r$ satisfies the cocycle equation
$$r(n+m,\omega)=r(n,\sigma^m\omega)r(m,\omega)$$
where $\sigma$ is the left shift-operator on $G^\Z$ defined by
$\sigma(\omega)_i = \omega_{i+1}$ for $i\in \Z$. 

Define the transformation $T:G^\Z \times Y \to G^\Z \times Y$ by $T(\omega,y)=(\sigma\omega, \omega_1^{-1}y)$. Observe that for any $n\in \Z$,
$$T^n(\omega, y) = (\sigma^n\omega, r(n,\omega)y).$$
This is a skew-product transfomation. For $n \in \Z$ define
\begin{equation*}
  \begin{array}{rcrcl}
    \phi_n&:&G^\Z \times Y &\longrightarrow &G^\N \times Y\\
    & & (\omega,y)&\longmapsto     &((\omega_n,\omega_{n+1},\ldots), y),
\end{array}
\end{equation*}
and let $\cP_\Z(G^\Z\times Y|\cmu^\Z)$ be the set of all
$T$-invariant Borel probability measures $\lambda$ on $G^\Z \times
Y$ such that
\begin{align}
  \label{eq:lambda}
  \phi_{1*}(\lambda) = \cmu^\N \times \nu
\end{align}
for some $\nu \in \cP(Y)$. Observe that because $\lambda$ is $T$-invariant, (\ref{eq:lambda}) implies the projection of $\lambda$ to the first coordinate is $\cmu^\Z$. So $\cP_\Z(G^\Z\times Y|\cmu^\Z) \subset \cP(G^\Z\times Y|\cmu^\Z)$. We give it the subspace topology.

 The main result of this section is:
\begin{thm}\label{thm:affine-homeo}
  $\cP_{\mu}(Y)$ is affinely homeomorphic with $\cP_\Z(G^\Z\times
  Y|\cmu^\Z)$. More precisely, define
  \begin{eqnarray*}
    \alpha: \cP_{\mu}(Y) \to \cP_\Z(G^\Z\times Y|\cmu^\Z)\\
    \alpha(\nu) = \int \delta_\omega \times \nu_\omega~d\cmu^\Z(\omega)
  \end{eqnarray*}
  where
  $$\nu_\omega = \lim_{n\to\infty} r(-n,\omega)^{-1}_*\nu =
  \lim_{n\to\infty}\omega_0^{-1}\omega_{-1}^{-1} \cdots
  \omega_{-(n-1)}^{-1}\nu$$
  for the full measure subset of $G^\Z$ for
  which this limit exists. Then $\alpha$ is an affine
  homeomorphism. 
\end{thm}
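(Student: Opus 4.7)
The strategy is to construct the inverse $\beta\colon\cP_\Z(G^\Z\times Y|\cmu^\Z)\to\cP_\mu(Y)$ as the projection onto the $Y$-factor, $\beta(\lambda)=(\pi_Y)_*\lambda$, and to verify $\alpha\circ\beta=\mathrm{id}$ and $\beta\circ\alpha=\mathrm{id}$. That $\beta$ lands in $\cP_\mu(Y)$ is quick: the condition $\phi_{1*}\lambda=\cmu^\N\times\nu$ forces $\beta(\lambda)=\nu$, and applying $T$-invariance to $F(\omega,y)=f(y)$ yields
\begin{displaymath}
\int f\,d\nu \;=\; \int f(\omega_1^{-1}y)\,d\lambda(\omega,y) \;=\; \sum_g \cmu(g)\int f(g^{-1}y)\,d\nu(y) \;=\; \sum_h\mu(h)\int f\,dh_*\nu,
\end{displaymath}
so $\nu$ is $\mu$-stationary. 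Both $\alpha$ and $\beta$ are visibly affine.

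Next I verify that $\nu_\omega$ exists $\cmu^\Z$-almost surely and that $\alpha(\nu)\in\cP_\Z(G^\Z\times Y|\cmu^\Z)$. For each $f\in C(Y)$ the scalars
\begin{displaymath}
M_n^f(\omega)\;:=\;\int f\,d(\omega_0^{-1}\omega_{-1}^{-1}\cdots\omega_{-n+1}^{-1})_*\nu
\end{displaymath}
form a bounded martingale with respect to $\mathcal{F}_n:=\sigma(\omega_0,\omega_{-1},\ldots,\omega_{-n+1})$ under $\cmu^\Z$; the identity $E[M_{n+1}^f\mid\mathcal{F}_n]=M_n^f$ is a direct unpacking of the stationarity identity $\sum_h\mu(h)h_*\nu=\nu$. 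Doob's convergence theorem and separability of $C(Y)$ produce a $\cmu^\Z$-conull set on which $\nu_\omega$ exists in the weak$^*$ topology. The $G^\Z$-marginal of $\alpha(\nu)$ is $\cmu^\Z$ by construction; the identity $\nu_{\sigma\omega}=\omega_1^{-1}_*\nu_\omega$ reads off the defining formula and, combined with $\sigma$-invariance of $\cmu^\Z$, yields $T_*\alpha(\nu)=\alpha(\nu)$; and since $\nu_\omega$ is $\sigma(\omega_{\le 0})$-measurable while the past is independent of the future under $\cmu^\Z$, and since the constancy of $E[M_n^f]=\int f\,d\nu$ gives $\int \nu_\omega\,d\cmu^\Z=\nu$, one obtains $\phi_{1*}\alpha(\nu)=\cmu^\N\times\nu$. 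Taking the $Y$-marginal also yields $\beta\circ\alpha=\mathrm{id}$.

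The main technical step will be $\alpha\circ\beta=\mathrm{id}$, i.e.\ recovering $\lambda$ from $\nu=\beta(\lambda)$. Disintegrate $\lambda=\int \delta_\omega\otimes\lambda_\omega\,d\cmu^\Z(\omega)$; the goal is to show $\lambda_\omega=\nu_\omega$ a.s. For each $m\ge 0$, $T^{-m}$-invariance of $\lambda$ together with $\phi_{1*}\lambda=\cmu^\N\times\nu$ gives that
\begin{displaymath}
\phi_1\circ T^{-m}(\omega,y)\;=\;\bigl((\omega_{1-m},\omega_{2-m},\ldots),\;r(-m,\omega)y\bigr)
\end{displaymath}
is distributed as $\cmu^\N\times\nu$ under $\lambda$. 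Hence, conditional on $\sigma(\omega_{\ge 1-m})$, the product $r(-m,\omega)y=\omega_{1-m}\cdots\omega_0\,y$ has law $\nu$; inverting the now-deterministic group element $h_m:=\omega_{1-m}\cdots\omega_0$ shows that the conditional law of $y$ is $(\omega_0^{-1}\cdots\omega_{1-m}^{-1})_*\nu$, equivalently
\begin{displaymath}
E\bigl[\lambda_\omega^f \,\big|\, \sigma(\omega_{\ge 1-m})\bigr]\;=\;M_m^f(\omega)\qquad\text{for every }f\in C(Y).
\end{displaymath}
As $m\to\infty$ the conditioning $\sigma$-algebras increase to the full Borel $\sigma$-algebra of $G^\Z$, so Levy's upward theorem shows the left side converges $\cmu^\Z$-a.s.\ to $\lambda_\omega^f$, while the right side converges to $\nu_\omega^f$ by construction. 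A countable weak$^*$-determining subset of $C(Y)$ then upgrades this to $\lambda_\omega=\nu_\omega$ a.s., whence $\lambda=\alpha(\nu)$.

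Finally, $\beta=(\pi_Y)_*$ is weak$^*$-to-weak$^*$ continuous, and for continuity of $\alpha$ the cleanest route is a compactness argument. By Appendix~\ref{sec:weak}, the weak$^*$ topology on $\cP(G^\Z\times Y|\cmu^\Z)$ is the same whether $G$ is equipped with its discrete topology or with its one-point compactification; the latter makes $G^\Z\times Y$ a compact metric space, so $\cP(G^\Z\times Y)$ is weak$^*$-compact, and both $\cP_\Z(G^\Z\times Y|\cmu^\Z)$ (cut out by $T$-invariance and a closed marginal condition) and $\cP_\mu(Y)$ are compact. A continuous bijection between compact Hausdorff spaces is a homeomorphism, so $\alpha=\beta^{-1}$ is continuous.
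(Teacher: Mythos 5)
Your overall architecture is sound and agrees with the paper's in its first half: the martingale construction of $\nu_\omega$, the cocycle identity giving $T$-invariance, the independence-of-past-and-future argument for $\phi_{1*}\alpha(\nu)=\cmu^\N\times\nu$, and $\beta\circ\alpha=\mathrm{id}$ are all exactly as in the paper. You diverge in two places. For $\alpha\circ\beta=\mathrm{id}$ you give a direct identification $\lambda_\omega=\nu_\omega$ via the conditional law of $y$ given $\sigma(\omega_{\geq 1-m})$ and L\'evy's upward theorem; the paper instead takes a shortcut: since $\phi_{1*}\lambda$ determines $\lambda$ for $T$-invariant measures (its Lemma~\ref{lem:shift-space}(\ref{item:one-to-one}), proved by pulling cylinder sets back under $T$), $\beta$ is injective, and injectivity of $\beta$ plus $\beta\circ\alpha=\mathrm{id}$ formally forces $\alpha\circ\beta=\mathrm{id}$. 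Your computation is correct and more informative (it exhibits the disintegration of every $\lambda$ explicitly), at the cost of being longer. For the topological step you also reverse the paper's direction: you prove continuity of $\beta$ (immediate) and want to invoke compactness of $\cP_\Z(G^\Z\times Y|\cmu^\Z)$, whereas the paper proves continuity of $\alpha$ directly on cylinder sets via Corollary~\ref{cor:weak} and invokes compactness of $\cP_\mu(Y)$.

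That last step is where you have a genuine gap. The parenthetical ``cut out by $T$-invariance and a closed marginal condition'' does not justify compactness of $\cP_\Z(G^\Z\times Y|\cmu^\Z)$: after passing to the one-point compactification of $G$ the map $T(\omega,y)=(\sigma\omega,\omega_1^{-1}y)$ does \emph{not} extend to a continuous self-map of the compactified product (there is no candidate for $\infty^{-1}$), so ``$T_*\lambda=\lambda$'' is not the preimage of a point under an obviously continuous map, and closedness of this condition is not automatic. It is in fact true, but proving it requires showing $\lambda\mapsto T_*\lambda$ is continuous on $\cP(G^\Z\times Y|\cmu^\Z)$, e.g.\ by writing $(T_*\lambda)^A(f)=\sum_{g}\lambda^{\sigma^{-1}A\cap\{\omega_1=g\}}(f\circ g^{-1})$ and using the uniform tail bound $\sum_g\cmu^\Z(\sigma^{-1}A\cap\{\omega_1=g\})=\cmu^\Z(A)$ to get a uniformly convergent series of continuous functionals --- an argument of essentially the same weight as the paper's direct proof that $\alpha$ is continuous. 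Note that the paper itself only obtains compactness of $\cP_\Z(G^\Z\times Y|\cmu^\Z)$ \emph{as a consequence} of this theorem (see the proof of Corollary~\ref{cor:proximal}), so assuming it here risks circularity. The cleanest repair is to follow the paper: check that $\nu\mapsto\alpha(\nu)^A$ is continuous for $A$ in the dense subalgebra of cylinder sets (where $\nu_\omega$ is approximated by the finite products $(\omega_0^{-1}\cdots\omega_{-m}^{-1})_*\nu$, which depend weak*-continuously on $\nu$), apply Corollary~\ref{cor:weak}, and then use that a continuous bijection from the compact space $\cP_\mu(Y)$ onto a Hausdorff space is a homeomorphism.
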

  
We need the following lemma.

\begin{lem}
  \label{lem:shift-space}
  \begin{enumerate}

  \item
    \label{item:one-to-one} 
    For all $\lambda_1,\lambda_2 \in \cP_\Z(G^\Z\times Y|\cmu^\Z)$
    \begin{align*}
      \phi_{1*}(\lambda_1) = \phi_{1*}(\lambda_2) \quad \Rightarrow
      \quad \lambda_1=\lambda_2.
    \end{align*}

  \item Given $\lambda \in \cP_\Z(G^\Z\times Y|\cmu^\Z)$ there exists
    a measurable map $\omega \mapsto \lambda^\omega$ from $G^\Z$ into
    $\cP(Y)$ such that
    \begin{align}
      \label{eq:stationary}
      \lambda = \int \delta_\omega \times
      \lambda^\omega~d\cmu^\Z(\omega).
    \end{align}
    Moreover, this map is unique up to null sets, and $\lambda^\omega$
    depends only on $\{w_n\,:\,n \leq 0\}$ for a.e.\ $\omega$.

  \item
    \label{item:stationary} 
    If $\lambda \in \cP_\Z(G^\Z \times Y|\cmu^\Z)$ and
    $\phi_{1*}(\lambda) = \cmu^\N \times \nu$ then $\nu \in
    \cP_{\mu}(Y)$.
  \end{enumerate}

\end{lem}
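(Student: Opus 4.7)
All three parts hinge on the $T$-invariance of $\lambda$ combined with the fact that $\phi_1$ only records future coordinates. I will treat them in order, using $T$-invariance to move information from the past into the future where $\phi_1$ can see it.

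\smallskip
For part (\ref{item:one-to-one}), I want to show that $\phi_{1*}\lambda$ determines $\lambda$. Since $\lambda$ lives on a Polish space, it is determined by its values on cylinder sets $C = \{\omega : \omega_k \in A_k, -N \le k \le M\} \times B$. Applying $(T^{N+1})^{-1}$ to such a cylinder yields
\[
(T^{N+1})^{-1}(C) = \{(\omega,y)\csuchthat \omega_j \in A_{j-N-1} \text{ for } 1 \le j \le M+N+1,\ r(N+1,\omega)y \in B\},
\]
since $r(N+1,\omega) = (\omega_1\cdots\omega_{N+1})^{-1}$ depends only on $\omega_1,\dots,\omega_{N+1}$. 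Thus the indicator of $(T^{N+1})^{-1}(C)$ is a function of $\phi_1(\omega,y)$, and by $T$-invariance $\lambda(C) = \lambda((T^{N+1})^{-1}(C))$ is computed from $\phi_{1*}\lambda$. This gives injectivity.

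\smallskip
For part (2), the existence and uniqueness of the measurable map $\omega \mapsto \lambda^\omega$ satisfying \eqref{eq:stationary} is the standard disintegration theorem for Polish spaces, applied to the projection $G^\Z \times Y \to G^\Z$ which pushes $\lambda$ to $\cmu^\Z$. The content is the past-measurability. Here I use the hypothesis $\phi_{1*}\lambda = \cmu^\N \times \nu$: this is a \emph{product} measure, so under $\lambda$, the $Y$-coordinate $y$ is independent of the future $(\omega_1,\omega_2,\dots)$. Since the marginal of $\omega$ is $\cmu^\Z$, the past $(\omega_0,\omega_{-1},\dots)$ is also independent of the future. Combining the two, the future is independent of the pair (past, $y$), so the conditional distribution of $y$ given the whole $\omega$ equals the conditional distribution of $y$ given the past. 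That conditional distribution is precisely $\lambda^\omega$, so $\lambda^\omega$ is measurable with respect to the past $\sigma$-algebra.

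\smallskip
For part (\ref{item:stationary}), apply $\phi_1$ to both sides of $T_*\lambda = \lambda$ to get $(\phi_1\circ T)_*\lambda = \phi_{1*}\lambda = \cmu^\N \times \nu$. Now
\[
\phi_1 \circ T(\omega,y) \;=\; \bigl((\omega_2,\omega_3,\dots),\,\omega_1^{-1}y\bigr).
\]
Under $\lambda$, the marginal distribution of $((\omega_1,\omega_2,\dots),y)$ is $\cmu^\N \times \nu$, so $\omega_1 \sim \cmu$ is independent of $y \sim \nu$, making the distribution of $\omega_1^{-1}y$ equal to $\sum_{g}\mu(g)\,g_*\nu$. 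The $Y$-marginal of $(\phi_1\circ T)_*\lambda$ must therefore coincide with the $Y$-marginal of $\cmu^\N\times\nu$, which is $\nu$. Equating them gives $\sum_{g}\mu(g)\,g_*\nu = \nu$, i.e.\ $\nu \in \cP_\mu(Y)$.

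\smallskip
The only step that requires any care is (\ref{item:one-to-one}): one has to notice that a single application of $T^{N+1}$ eliminates both the non-positive coordinates of $\omega$ \emph{and} the $y$-entry (by replacing it with something measurable in the future coordinates), so that everything folds into $\phi_1$. Parts (2) and (3) are then essentially direct consequences of the product structure of $\phi_{1*}\lambda$ and $T$-invariance, respectively.
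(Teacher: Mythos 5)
Parts (1) and (3) of your argument are fine. Part (1) is the same idea as the paper's proof: push a generating cylinder forward under a suitable power of $T$ so that it becomes $\phi_1$-measurable, then invoke $T$-invariance. Your part (3) is actually a bit more direct than the paper's, since it only uses the independence of $\omega_1$ and $y$ coming from the product structure of $\phi_{1*}\lambda$, rather than the disintegration from part (2).

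There is, however, a genuine gap in part (2), in the sentence ``Combining the two, the future is independent of the pair (past, $y$).'' Writing $F=(\omega_1,\omega_2,\dots)$ and $P=(\dots,\omega_{-1},\omega_0)$, you are inferring the joint independence of $F$ from $(P,y)$ out of the two pairwise independences $F\perp y$ and $F\perp P$. This inference is invalid: if $P,F$ are independent fair coin flips and $y=P\oplus F$, then $y$ is independent of $F$ and $P$ is independent of $F$, yet $F$ is a deterministic function of $(P,y)$, so the conditional law of $y$ given $(P,F)$ is a point mass while its conditional law given $P$ alone is uniform. The issue is not cosmetic: the past-measurability of $\lambda^\omega$ really does use the $T$-invariance of $\lambda$, which your part (2) never invokes --- a non-invariant $\lambda$ with $\omega$-marginal $\cmu^\Z$ and with $\phi_{1*}\lambda=\cmu^\N\times\nu$ can have $\lambda^\omega$ depending on $\omega_1$, by exactly such an XOR-type construction. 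The repair is to use invariance at every negative time: for $n\le 0$, $\phi_{1*}(T^{n-1}_*\lambda)=\phi_{1*}\lambda=\cmu^\N\times\nu$, and since $\phi_1\circ T^{n-1}(\omega,y)=\bigl((\omega_n,\omega_{n+1},\dots),\,\omega_n\cdots\omega_0\,y\bigr)$, this says that $y':=\omega_n\cdots\omega_0\,y$ has law $\nu$ and is independent of $(\omega_n,\omega_{n+1},\dots)$. Hence the conditional law of $y$ given $\sigma(\omega_k\,:\,k\ge n)$ is $(\omega_n\cdots\omega_0)^{-1}_*\nu$, which is measurable with respect to $\sigma(\omega_k\,:\,k\le 0)$; letting $n\to-\infty$ and applying martingale convergence shows $\lambda^\omega$ depends only on the past. (This is precisely the martingale $\nu_\omega$ that appears in the proof of Theorem~\ref{thm:affine-homeo}.)
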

\begin{proof}
  \begin{enumerate}

  \item Let $\cA_n$ be the sub-sigma-algebra generated by $\phi_n$, so
    that $\sigma(\cup_n\cA_n)$ is the entire sigma-algebra. For $A \in
    \cA_n$ and $i=1,2$, $\lambda_i(A) = \lambda_i(T^{-n+1}A)$, by
    $T$-invariance. But $T^{-n+1}A$ is $\cA_1$-measurable, and, for
    sets in $\cA_1$, $\lambda_1$ and $\lambda_2$ are identical,
    by~\eqref{eq:lambda}.

  \item Existence and uniqueness follow from the disintegration
    theorem.  By~\eqref{eq:lambda}, $\lambda^\omega$ depends only on
    $\{w_n\,:\,n \leq 0\}$.

  \item By~\eqref{eq:stationary}
    \begin{align*}
      \phi_{1*}(T_*\lambda) &= \phi_{1*}\left(\int
        \delta_{\sigma\omega} \times
        \omega_{1*}^{-1}\lambda^\omega~d\cmu^\Z(\omega)\right)\\
      &= \int \delta_{(\omega_2,\omega_3,\ldots)} \times
      \omega_{1*}^{-1}\lambda^\omega~d\cmu^\Z(\omega).
    \end{align*}
    Since $\lambda^\omega$ depends only on $\{\omega_n\,:\,n \leq 0\}$
    then
    \begin{align*}
      &=\int\delta_{(\omega_2,\omega_3,\ldots)}~d\cmu^\Z(\omega) \times
      \int \omega'^{-1}_*\left(\int \lambda^\omega~d\cmu^\Z(\omega)\right)~d\cmu^\Z(\omega')\\
      &= \cmu^\N \times \int
      g_*\nu~d\mu(g),
    \end{align*}
    where the last equality follows from the fact that $\nu = \int
    \lambda^\omega~\d\cmu^\Z(\omega)$, a consequence
    of~\eqref{eq:lambda} and the definition of $\lambda^\omega$.  But
    $T_{*}\lambda = \lambda$, and so
    \begin{align*}
      \nu = \int g_*\nu~d\mu(g).
    \end{align*}
  
  \end{enumerate}
  
\end{proof}

\begin{proof}[Proof of Theorem~\ref{thm:affine-homeo}]
  If $\omega \in G^\Z$ is chosen at random with law $\cmu^\Z$ then
  $n\mapsto r(-n,\omega)^{-1}_*\nu$ is a martingale. By the Martingale
  Convergence Theorem, $\nu_\omega$ exists for a.e.\ $\omega$, and
  furthermore
  \begin{align}
    \label{eq:nu}
    \int\nu_\omega~d\cmu^\Z(\omega) = \nu.
  \end{align}
  Also,
  \begin{align}
    \label{eq:r-nu}
    r(m,\omega)_*\nu_\omega = r(m,\omega)_*\lim_{n\to\infty}
    r(-n,\omega)^{-1}_*\nu
    = \lim_{n\to\infty}r(-n,\sigma^m\omega)^{-1}_*\nu
    =\nu_{\sigma^m\omega},
  \end{align}
  where the second equality follows from the weak* continuity of the
  $G$-action on $\cP(Y)$ and the cocycle property of $r$.

 Recall 
$$ \alpha(\nu) = \int \delta_\omega \times \nu_\omega~d\cmu^\Z(\omega).$$
So $\alpha(\nu)$ is indeed $T$-invariant
  by~\eqref{eq:r-nu}, and, since $\nu_\omega$ depends only on
  $\{\omega_n\,:\,n \leq 0\}$, \eqref{eq:lambda} is satisfied and so
  $\alpha(\nu) \in \cP_\Z(G^\Z\times Y|\cmu^\Z)$.  Also define
  $\beta:\cP_\Z(G^\Z\times Y|\cmu^\Z) \to \cP_{\mu}(Y)$ by
  $$\beta(\lambda) = \int \lambda^\omega ~d\cmu^\Z(\omega),$$
  where $\lambda^\omega$ is given by~\eqref{eq:stationary}. The image of
  $\beta$ is indeed in $\cP_{\mu}(Y)$ by
  Lemma~\ref{lem:shift-space}~(\ref{item:stationary}). 

  Note that $\beta(\lambda)$ is simply the push-forward of $\lambda$
  under the projection on the second coordinate. Hence it follows
  from~\eqref{eq:nu} that $\beta \circ \alpha$ is the identity, and
  $\alpha$ is one-to-one. By
  Lemma~\ref{lem:shift-space}~(\ref{item:one-to-one}) $\beta$ is
  one-to-one, and so $\alpha \circ \beta$ is also the identity. Thus
  $\alpha$ and $\beta$ are inverses.

  It is clear that $\alpha$ and $\beta$ are affine. So it suffices to
  show they are continuous. In fact, it suffices to show that $\alpha$
  is continuous, since $\cP_{\mu}(Y)$ is compact, and since every
  continuous bijection between compact spaces is a homeomorphism.

  For each $n\in \Z$, let $\pi_n:G^\Z \to G$ be the $n$-th coordinate
  projection. Let $A \subset G^\Z$ be a Borel set contained in the
  sigma-algebra generated by $\{\pi_n:~n\in [-m,m] \cap \Z\}$ where
  $m>0$ is some integer. Because $G$ acts continuously, if $\nu_n \to
  \nu_\infty$ in $\cP_{\mu}(Y)$ then
  $$\alpha(\nu_n)^A  = \int_{A} (\nu_n)_\omega~d\cmu^\Z(\omega) = \int_A (\omega_0^{-1}\omega_{-1}^{-1} \cdots
  \omega_{-m}^{-1})_*\nu_n~d\cmu^\Z(\omega)$$ converges to
  $\alpha(\nu_\infty)^A$ as $n\to\infty$. Because the coordinate
  projections generate the sigma-algebra of $G^\Z$, this shows that
  $\alpha(\nu_n)^A$ converges to $\alpha(\nu_\infty)^A$ for all $A$ in
  a dense subset of the measure algebra of $\cmu^\Z$. So
  $\alpha(\nu_n)$ converges to $\alpha(\nu_\infty)$ by
  Corollary~\ref{cor:weak}. Because $\{\nu_n\}$ is arbitrary, $\alpha$
  is continuous.

\end{proof}

\subsection{Proximal and Poisson measures}
In this section we finish the proof of Theorem \ref{thm:G-deltas}. In order to prove that proximal measures form a $G_\delta$ subset, we need the next lemma. 
 

\begin{lem}
  \label{lem:proximal}
  Let $\cQ$ be the set of all measures $\lambda \in \cP_\Z(G^\Z\times
  Y|\cmu^\Z)$ such that there exists some measurable map $f:G^\Z \to Y$
  such that
  $$\lambda = \int \delta_\omega \times \delta_{f(\omega)}~d\cmu^\Z(\omega).$$
  Then $\nu \in \cP_{\mu}(Y)$ is proximal if and only if $\alpha(\nu)
  \in \cQ$ (where the affine homeomorphism $\alpha:\cP_{\mu}(Y) \to
  \cP_\Z(G^\Z\times Y|\cmu^\Z)$ is as in Theorem \ref{thm:affine-homeo}).
\end{lem}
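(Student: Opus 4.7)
The plan is to recognize $\nu_\omega$ as the conditional hitting measure of $\nu$ on the Poisson boundary of the $\mu$-random walk, and then invoke the classical characterization (due to Furstenberg) that a $\mu$-stationary measure is proximal if and only if its hitting measures are almost surely Dirac.

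First, I would record the easy reduction. Since the disintegration $\alpha(\nu)=\int \delta_\omega\times \nu_\omega\,d\cmu^\Z(\omega)$ is unique (up to null sets), $\alpha(\nu)\in\cQ$ if and only if $\nu_\omega$ is a Dirac measure for $\cmu^\Z$-a.e.\ $\omega$, in which case $f(\omega)$ is the unique atom of $\nu_\omega$.

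Second, I would reparametrize the past of $\omega$ as a forward $\mu$-random walk. Setting $h_i:=\omega_{-(i-1)}^{-1}$ for $i\geq 1$, the sequence $(h_i)_{i\geq 1}$ is i.i.d.\ with law $\mu$ because $(\omega_{-j})_{j\geq 0}$ is i.i.d.\ with law $\cmu$. Under this reparametrization,
$$r(-n,\omega)^{-1}=\omega_0^{-1}\omega_{-1}^{-1}\cdots\omega_{-(n-1)}^{-1}=h_1h_2\cdots h_n,$$
so $\nu_\omega=\lim_{n\to\infty}(h_1h_2\cdots h_n)_*\nu$. This is precisely the bounded martingale in $\cP(Y)$ associated to the $\mu$-stationary measure $\nu$ and the $\mu$-random walk. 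Since $\nu_\omega$ depends only on the past $\{\omega_n\colon n\leq 0\}$ and is equivariant under the shift in the sense of~\eqref{eq:r-nu}, it is measurable with respect to the tail $\sigma$-algebra of $(h_n)$, hence descends to a measurable map $\xi\mapsto \nu_\xi$ on the Poisson boundary $\Pi(G,\mu)$. By Furstenberg's boundary theory, this map realizes the disintegration of $\nu$ over $\Pi(G,\mu)$: $\nu=\int \nu_\xi\,d\beta(\xi)$, where $\beta$ is the harmonic (Furstenberg) measure on $\Pi(G,\mu)$.

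Third, I would apply the standard characterization of proximal measures. If $\nu$ is proximal, i.e.\ $G\cc(Y,\nu)$ is a measurable factor of $G\cc(\Pi(G,\mu),\beta)$ via some map $\pi$, then uniqueness of the disintegration forces $\nu_\xi=\delta_{\pi(\xi)}$ for $\beta$-a.e.\ $\xi$. Conversely, if $\nu_\xi=\delta_{\pi(\xi)}$ $\beta$-a.s., the map $\pi$ is a measurable $G$-equivariant factor from $\Pi(G,\mu)$ onto $(Y,\nu)$ since $\pi_*\beta=\int \delta_{\pi(\xi)}\,d\beta(\xi)=\nu$, so $\nu$ is proximal. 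Chaining the three equivalences gives $\alpha(\nu)\in\cQ \iff \nu_\omega\text{ is Dirac a.s.}\iff \nu_\xi\text{ is Dirac a.s.}\iff \nu\text{ is proximal}$.

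The only real bookkeeping is the bi-infinite/one-sided correspondence via the reparametrization $h_i=\omega_{-(i-1)}^{-1}$; once this is in place, everything else is a transcription of well-established Furstenberg-boundary results, and no extra ingredients are needed.
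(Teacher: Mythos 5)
Your proposal is correct and follows essentially the same route as the paper: reduce membership in $\cQ$ to the statement that the conditional measures $\nu_\omega$ are almost surely Dirac (via uniqueness of the disintegration), and then invoke the Furstenberg characterization that a stationary measure is proximal if and only if its conditional (hitting) measures are a.s.\ point masses. The paper simply cites~\cite{furstenberg2009stationary} for that last equivalence, whereas you sketch its proof; your extra detail is consistent with how the paper itself handles the map $b\mapsto\theta_b$ in \S\ref{sec:outline}.
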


\begin{proof}
Recall from Theorem~\ref{thm:affine-homeo} that for any $\nu \in \cP_{\mu}(Y)$, 
$$\nu_\omega = \lim_{n\to\infty} r(-n,\omega)^{-1}_*\nu = \lim_{n\to\infty}\omega_0^{-1}\omega_{-1}^{-1} \cdots \omega_{-(n-1)}^{-1}\nu$$ exists for $\cmu^\Z$-a.e.\ $\omega \in G^\Z$. It is well known \cite{furstenberg2009stationary} that $\nu$ is proximal if and only if $\nu_\omega$ is a Dirac measure for a.e.\ $\omega$. So the lemma follows from Theorem~\ref{thm:affine-homeo}.

\end{proof}

\begin{cor}\label{cor:proximal}
$\cP^{proximal}_\mu(Y)$ is a $G_\delta$ subset of $\cP_\mu(Y)$.
\end{cor}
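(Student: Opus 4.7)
The plan is to use Lemma~\ref{lem:proximal}, which reduces proximality of $\nu$ to the condition that $\nu_\omega$ is a Dirac measure for $\cmu^\Z$-a.e.\ $\omega$. Since $Y$ is a compact metric space, I would fix a countable family $\{f_k\}\subset C(Y)$ dense in the uniform norm; then $\nu_\omega$ is Dirac if and only if $\nu_\omega(f_k^2)-\nu_\omega(f_k)^2=0$ for every $k$. As this variance is nonnegative, the equivalent integrated condition is that
\[
D_k(\nu)\;:=\;\int_{G^\Z}\bigl[\nu_\omega(f_k^2)-\nu_\omega(f_k)^2\bigr]\,d\cmu^\Z(\omega)\;=\;0 \qquad \text{for every } k.
\]
It therefore suffices to show that each $D_k$ is upper semicontinuous on $\cP_\mu(Y)$: then $\{D_k=0\}$ is $G_\delta$, and so is the countable intersection $\cP^{proximal}_\mu(Y)=\bigcap_k\{D_k=0\}$.

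By~\eqref{eq:nu}, $\int\nu_\omega(f_k^2)\,d\cmu^\Z(\omega)=\nu(f_k^2)$, which is continuous in $\nu$, so only the second term requires work. Set $F_n(\nu,\omega):=\int_Y f_k\bigl(r(-n,\omega)^{-1}y\bigr)\,d\nu(y)$, which depends on $\omega$ only through $(\omega_{-n+1},\ldots,\omega_0)$ and converges pointwise to $\nu_\omega(f_k)$ by the definition of $\nu_\omega$. A direct computation using the $\mu$-stationarity of $\nu$ (namely $\int g_*\nu\,d\mu(g)=\nu$) shows that $(F_n(\nu,\cdot))_{n\ge1}$ is a martingale with respect to the increasing filtration $\sigma(\omega_{-n+1},\ldots,\omega_0)$. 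Since it is uniformly bounded by $\|f_k\|_\infty$, orthogonality of martingale increments gives $V_n(\nu):=\int F_n^2\,d\cmu^\Z\;\uparrow\;\int\nu_\omega(f_k)^2\,d\cmu^\Z$ as $n\to\infty$.

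For each fixed $n$, I would expand
\[
V_n(\nu)\;=\;\sum_{(g_1,\ldots,g_n)\in G^n}\cmu(g_1)\cdots\cmu(g_n)\left[\int_Y f_k\bigl((g_1\cdots g_n)^{-1}y\bigr)\,d\nu(y)\right]^2,
\]
where each summand is continuous in $\nu$ and the whole sum is dominated termwise by $\cmu(g_1)\cdots\cmu(g_n)\|f_k\|_\infty^2$. Dominated convergence over the counting measure on $G^n$ weighted by $\cmu^n$ then yields that $V_n$ is continuous on $\cP_\mu(Y)$. Hence $\sup_n V_n=\int\nu_\omega(f_k)^2\,d\cmu^\Z$ is lower semicontinuous, $D_k$ is upper semicontinuous and nonnegative, and $\{D_k=0\}$ is a $G_\delta$.

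The only technical subtlety I anticipate is the continuity of $V_n$ when $G$ is infinite, since the sum above then has infinitely many terms; this is precisely where the uniform $L^\infty$-bound on $f_k$ enters, enabling dominated convergence. No further obstacle seems to arise, because Lemma~\ref{lem:proximal} has already performed the conceptual reduction from proximality to a condition on the martingale limits $\nu_\omega$.
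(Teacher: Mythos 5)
Your argument is correct, and it takes a genuinely different route from the paper's. The paper proves this corollary by passing through the affine homeomorphism $\alpha$ of Theorem~\ref{thm:affine-homeo}: the proximal measures correspond to the set $\cQ$ of graph measures, which is identified as the intersection of the extreme points of $\cP(G^\Z\times Y|\cmu^\Z)$ with the closed set $\cP_\Z(G^\Z\times Y|\cmu^\Z)$, and then Lemma~\ref{thm:ex-G-delta} (extreme points of a simplex form a $G_\delta$) finishes the job. You instead stay on the $\cP_\mu(Y)$ side, quantify ``$\nu_\omega$ is Dirac a.e.'' by the integrated variances $D_k$, and prove upper semicontinuity of each $D_k$ by writing the subtracted term as the increasing limit of the second moments $V_n$ of the bounded martingale $F_n(\nu,\cdot)$, each $V_n$ being continuous by a dominated-convergence argument over $G^n$. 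All the steps check out: the martingale property of $F_n$ follows from stationarity exactly as in the proof of Theorem~\ref{thm:affine-homeo}; monotonicity of $V_n$ and convergence to $\int\nu_\omega(f_k)^2\,d\cmu^\Z(\omega)$ follow from orthogonality of increments and $L^2$-convergence of the bounded martingale; and the identity $\int\nu_\omega(f_k^2)\,d\cmu^\Z(\omega)=\nu(f_k^2)$ is~\eqref{eq:nu}. What the paper's route buys is brevity given the machinery already built (the homeomorphism $\alpha$ and the soft extreme-point lemma); what your route buys is an explicit, quantitative upper semicontinuous functional whose zero set is exactly $\cP^{proximal}_\mu(Y)$, with the open sets $\{D_k<1/m\}$ written down concretely, and no appeal to the extreme-point structure of $\cP(G^\Z\times Y|\cmu^\Z)$. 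The one external input you still share with the paper is the characterization (from the proof of Lemma~\ref{lem:proximal}, citing~\cite{furstenberg2009stationary}) that $\nu$ is proximal iff $\nu_\omega$ is a Dirac measure for a.e.\ $\omega$; you should cite that fact directly rather than the statement of Lemma~\ref{lem:proximal}, whose literal conclusion is phrased in terms of $\cQ$.
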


\begin{proof}
By the previous lemma it suffices to show that $\cQ$ is a 
$G_\delta$ subset of $\cP_\Z(G^\Z\times Y|\cmu^\Z)$. 

Let $\cP^{ex}(G^\Z \times Y|\cmu^\Z)$ be the set of extreme points of
  $\cP(G^\Z\times Y|\cmu^\Z)$. Observe that for every $\lambda \in
  \cP^{ex}(G^\Z \times Y|\cmu^\Z)$ there exists some measurable map
  $f:G^\Z \to Y$ such that
  $$\lambda = \int \delta_\omega \times \delta_{f(\omega)}~d\cmu^\Z(\omega).$$
  Therefore
  $$\cQ = \cP^{ex}(G^\Z \times Y|\cmu^\Z) \cap \cP_\Z(G^\Z\times Y|\cmu^\Z).$$
  By Lemma~\ref{thm:ex-G-delta}, $\cP^{ex}(G^\Z \times Y|\cmu^\Z)$ is
  a $G_\delta$ subset of $\cP(G^\Z \times Y|\cmu^\Z)$. Of course,
  $\cP_\Z(G^\Z \times Y|\cmu^\Z)$ is closed in $\cP(G^\Z \times
  Y|\cmu^\Z)$ (because it is compact since it is homeomorphic with $\cP_\mu(Y)$). Since closed sets are $G_\delta$ subsets and
  intersections of $G_\delta$'s are also $G_\delta$'s, this proves
  that $\cQ$ is a $G_\delta$.
\end{proof}

\begin{proof}[Proof of Theorem \ref{thm:G-deltas}]
  By Corollaries~\ref{cor:ergodic} and~\ref{cor:proximal}, it suffices
  to show that
  $$\cP^{Poisson}_\mu(Y) = \cP^{max}_\mu(Y) \cap \cP^{proximal}_\mu(Y)$$
  whenever $H(\mu)<\infty$. This is proven
  in~\cite{kaimanovich1983random}.
\end{proof}

\section{Dense measure conjugacy classes}
\label{sec:density}
Let $G \cc Y$ be an action by homeomorphisms on a compact metrizable
space $Y$. Also let $\bfz=G \cc (Z,\zeta)$ be a $\mu$-stationary
action. Let $\Fact(\bfz,Y)$ be the set of all probability measures
$\nu \in \cP_\mu(Y)$ such that $\nu = \pi_*\zeta$ where $\pi:Z \to Y$ is a
$G$-equivariant Borel map. This is the set of all {\em factor
  measures} of the action $G \cc (Z,\zeta)$. Denote by $\Conj(\bfz,
Y)$ the set of all probability measures $\nu \in \cP_\mu(Y)$ such that
$G \cc (\nu, Y)$ is measurably conjugate to $(Z,\zeta)$. This is a
subset of $\Fact(\bfz, Y)$.

The main technical result of this section is
\begin{thm}
  \label{thm:0-1-dense}
  Let $W$ be a compact metric space.  Let $\bfb=G \cc (B,\nu)$ be any
  stationary, essentially free extension of the Poisson boundary. Then
  $\Fact(\bfb,W^G)$ is dense in $\cP_\mu(W^G)$.
\end{thm}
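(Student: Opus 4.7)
The plan is to reduce the density claim to approximating finite-dimensional marginals and then to construct factor maps via a Rohlin-tower argument on $\bfb$. Fix $\nu_0 \in \cP_\mu(W^G)$; since weak* convergence in $\cP(W^G)$ is characterized by convergence of all finite-dimensional marginals, it is enough to produce, for every finite $F \subset G$ and $\varepsilon > 0$, a $G$-equivariant measurable $\pi \colon B \to W^G$ whose pushforward $\pi_*\nu_B$ has $F$-marginal within $\varepsilon$ of the $F$-marginal of $\nu_0$. By equivariance such a $\pi$ is uniquely determined by a measurable $f \colon B \to W$ via $\pi(b)(g) = f(g^{-1}b)$, so the problem reduces to choosing $f$ so that the joint law of $(f(g^{-1}b))_{g \in F}$ under $b \sim \nu_B$ has the desired joint distribution.

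The key tool will be Lemma~\ref{lem:rohlin}, a Rohlin-type lemma which applies to $\bfb$ because its $G$-action is amenable in Zimmer's sense; amenability is inherited from $\Pi(G,\mu)$, of which $\bfb$ is an extension. Given a large finite $K \supset F$ and a small $\delta > 0$, I plan to use this lemma to produce a measurable $A \subset B$ whose translates $\{kA\}_{k \in K}$ are pairwise disjoint (using essential freeness) and satisfy $\nu_B\bigl(\bigsqcup_{k \in K} kA\bigr) > 1 - \delta$. Since $(A,\nu_B|_A)$ is a nonatomic standard measure space, I can then select a measurable $F_A \colon A \to W^K$ whose pushforward of the normalized $\nu_B|_A$ equals the $K$-marginal of $\nu_0$, suitably reweighted to correct for the stationary (rather than invariant) distribution of the tower positions. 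Setting $f(ka) := F_A(a)_k$ for $a \in A$, $k \in K$, and arbitrarily on the complement of the tower, the tuple $(f(g^{-1}b))_{g \in F}$ at $b = k_0 a$ deep inside the tower equals the translated window $(F_A(a)_{g^{-1}k_0})_{g \in F}$ of the sample $F_A(a)$. Integrating a continuous test function then gives an approximation of the corresponding integral against the $F$-marginal of $\nu_0$ with error controlled by $\delta$ plus the mass of boundary positions for which $g^{-1}k_0 \notin K$ for some $g \in F$.

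The main obstacle is reconciling the stationary weighting of the tower positions under $\nu_B$ with the invariant-style Rohlin averaging that would make the marginal match automatic. My plan for handling this is to pass through the $T$-invariant lift $\alpha(\nu_B) \in \cP_\Z(G^\Z \times B \mid \cmu^\Z)$ of Theorem~\ref{thm:affine-homeo}: the skew-product $T$ is genuinely measure-preserving, so a standard $\Z$-Rohlin tower construction can be carried out on $(G^\Z \times B, \alpha(\nu_B))$. By constraining the resulting $T$-equivariant factor to respect the $G^\Z$ coordinate, it descends via $\alpha^{-1}$ to a $G$-factor of $\bfb$, while the target measure on $G^\Z \times W^G$ is arranged to lie close to $\alpha(\nu_0)$. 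The fine structure of $\alpha(\nu_0)$ --- in particular the conditional measures $(\nu_0)_\omega$, which by Lemma~\ref{lem:proximal} are Dirac precisely on the proximal part and more diffuse elsewhere --- will pin down exactly the sampling rule to impose on $F_A$ so that the resulting $F$-marginals match $\nu_0$ to within $\varepsilon$.
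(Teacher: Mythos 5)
Your overall architecture (reduce to finite marginals, build an equivariant $\pi$ by painting names on a tower supplied by amenability and freeness) is the right one and matches the paper's strategy, but two of your steps have genuine gaps. First, the tower you ask for is too strong: you want a base $A\subset B$ with $\{kA\}_{k\in K}$ pairwise disjoint \emph{and} $\nu\bigl(\bigsqcup_{k\in K}kA\bigr)>1-\delta$ for a prescribed finite $K\supset F$. A fixed-shape Rokhlin tower of near-full measure amounts to an approximate tiling of the $G$-orbits by translates of $K$, which is tied to amenability of the group $G$ itself (Ornstein--Weiss), not merely of the action; here $G$ is arbitrary and typically nonamenable. What amenability of $G\cc(B,\nu)$ actually gives, via hyperfiniteness of the orbit relation, is Lemma~\ref{lem:rohlin}: equivariant partitions $\cQ_{n,b}$ of $G$ into finite tiles of \emph{variable, uncontrolled shape}, with only the weaker guarantee that for most $b$ the tile containing $1_G$ eventually contains $F$. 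The paper's proof is organized around exactly this weaker structure (the sets $B_n$, $\cP_n$, $\psi_n$ and the roots $R_{n,b}$), and your argument needs to be rewritten to accommodate tiles whose shape depends on $b$.

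Second, the ``suitable reweighting to correct for the stationary distribution'' is the actual crux and your proposal does not resolve it. If you paint i.i.d.\ samples of the $K$-marginal of $\theta=\nu_0$ on the tower, the $F$-marginal of $\pi_*\nu$ becomes an average over tower positions of translated windows of $\theta$, and for a non-invariant stationary $\theta$ this average need not be close to the $F$-marginal of $\theta$; no reweighting of the tower positions alone fixes this. The paper's solution is to use the hypothesis that $\bfb$ extends the Poisson boundary a second time (beyond supplying amenability): the factor map $\alpha\colon B\to\Pi(G,\mu)$ lets one disintegrate $\theta=\int\theta_b\,d\nu(b)$ via the martingale limit (\ref{eqn:theta_b}), and one paints on the tile rooted at $b$ a sample of the \emph{conditional} measure $\theta_b$, with the approximation required simultaneously for all translates $g_*\nu\res P$, $g$ in the tile, as in (\ref{eqn:key-dense}) --- this is where the step-function approximation of the Radon--Nikodym derivatives enters. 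Your alternative of running a $\Z$-Rokhlin argument on the skew product $(G^\Z\times B,T)$ of Theorem~\ref{thm:affine-homeo} does not obviously substitute for this: a $\Z$-tower for $T$ is adapted to the time direction of the random walk rather than to the $G$-orbit geometry needed to define $\pi(b)(g)$ on a window $F$, and a $T$-equivariant map ``respecting the $G^\Z$ coordinate'' does not automatically descend to a $G$-equivariant map $B\to W^G$ --- the correspondence $\alpha$ identifies spaces of measures, not factors. You have correctly identified that the conditional measures $(\nu_0)_\omega$ must govern the sampling rule, but the mechanism for transporting them to $B$ is precisely the boundary factor map, and that step needs to be made explicit and carried out.
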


Note that such a $(B,\nu)$ always exists; for example, take the
product of the Poisson boundary with a Bernoulli shift.  Before
proving this claim we draw a number of consequences, and prove a
``sharpness'' claim. Let $\X = \{0,1\}^\N$ be the Cantor space,
equipped with the usual product topology.
\begin{thm}
  \label{thm:poisson-dense}
  Let $\bfb=G \cc (B,\nu)$ be a stationary, essentially free extension
  of the Poisson boundary. Then $\Conj(\bfb, \X^G)$ is dense in
  $\cP_\mu(\X^G)$.
\end{thm}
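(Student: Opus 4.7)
The plan is to upgrade the factor-map density of Theorem~\ref{thm:0-1-dense} (taking $W=\X$) to conjugacy density by attaching a sparse injective ``encoding'' layer that does not perturb the weak$^*$ approximation. Fix $\eta \in \cP_\mu(\X^G)$ and a basic weak$^*$ neighborhood $U \ni \eta$, which may be assumed specified by a tolerance $\epsilon>0$ on cylinders whose coordinates lie in some finite set $F \subset \N \times G$, where $\X^G = \{0,1\}^{\N \times G}$.

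I would partition $\N$ into two countably infinite disjoint subsets $\N = \N_1 \sqcup \N_2$ with the $\N$-projection of $F$ contained in $\N_1$, and fix bijections $\N_i \leftrightarrow \N$ so that $\{0,1\}^{\N_i} \cong \X$. Let $\eta_1 \in \cP_\mu(\{0,1\}^{\N_1 \times G})$ be the pushforward of $\eta$ under the projection onto $\N_1$-coordinates. Applying Theorem~\ref{thm:0-1-dense} to the compact metric space $\{0,1\}^{\N_1}$ with target measure $\eta_1$ yields a $G$-equivariant Borel map $\pi_0 : B \to \{0,1\}^{\N_1 \times G}$ such that $\pi_{0*}\nu$ approximates $\eta_1$ within $\epsilon$ on every cylinder supported in $F$; since $F \subset \N_1 \times G$, the $F$-marginal of $\eta_1$ equals that of $\eta$.

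Independently, I would build a $G$-equivariant Borel injection $\tau : B \to \{0,1\}^{\N_2 \times G}$ from a countable separating family $\{A_n\}_{n \in \N_2}$ of Borel subsets of $B$, by setting $\tau(b)_{(n,g)} = \ind_{A_n}(g^{-1}b)$; $G$-equivariance is a direct check, and injectivity follows because the $(n,e)$-coordinates recover $\ind_{A_n}(b)$ and $\{A_n\}$ separates points. Gluing $\pi_0$ on the $\N_1$-coordinates and $\tau$ on the $\N_2$-coordinates produces a $G$-equivariant injective Borel map $\pi : B \to \X^G$, which by the Lusin--Souslin theorem is a Borel isomorphism onto its image; hence $G \cc (\X^G, \pi_*\nu)$ is measurably conjugate to $\bfb$, so $\pi_*\nu \in \Conj(\bfb, \X^G)$. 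Since $F \subset \N_1 \times G$, the $F$-marginal of $\pi_*\nu$ matches that of $\pi_{0*}\nu$ and therefore lies within $\epsilon$ of $\eta|_F$, placing $\pi_*\nu \in U$. The only subtlety is that the encoding coordinates must not interfere with the approximation, which is ensured by the disjointness of $\N_1$ and $\N_2$; essential freeness of $\bfb$ enters only through the invocation of Theorem~\ref{thm:0-1-dense}, as $\tau$ is already pointwise injective without any freeness hypothesis.
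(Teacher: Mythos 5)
Your proof is correct and is essentially the paper's argument: both exploit the product structure $\X=\{0,1\}^\N$ to reserve infinitely many coordinates for a $G$-equivariant injective encoding of $B$ while using the remaining coordinates to match the target measure via Theorem~\ref{thm:0-1-dense}. The only cosmetic difference is that you split $\N$ into two fixed infinite halves for a given neighborhood, whereas the paper interleaves a fixed conjugacy $\psi$ into the tail coordinates of an approximating factor $\pi$ and lets the cutoff $i\to\infty$.
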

\begin{proof}
  By Theorem~\ref{thm:0-1-dense} it suffices to show that for every
  factor $\pi:(B,\nu) \to (\X^G,\pi_*\nu)$ there exists a sequence of
  measure-conjugacies $\Phi_i:(B,\nu) \to (\X^G,\Phi_{i*}\nu)$ such
  that
  $$\lim_{i\to\infty} \Phi_{i*}\nu = \pi_*\nu$$
  in the weak* topology on $\cP_\mu(\X^G)$.  For this purpose, choose
  a measure-conjugacy $\psi:(B,\nu) \to (\X^G,\psi_*\nu)$; the
  existence of such a measure-conjugacy follows from the fact that
  there exists a countable dense subset of the measure algebra of
  $(B,\nu)$. We are requiring that $\psi$ is $G$-equivariant and a
  measure-space isomorphism. Define $\Phi_i \colon B \to \X^G$ by
  $$\Phi_i(b)(g) = (x_1,x_2,\ldots)$$
  where $x_j = \pi(b)(g)_j$ if $j\le i$ and $x_j = \psi(b)(g)_{j-i}$
  if $j>i$. If $\Proj_i\colon \X^G \to \X^G$ denotes the projection
  $$\Proj_i(x)(g)=(x_{i+1}(g),x_{i+2}(g),\ldots)$$
  then $\Proj_i \circ \Phi_i = \psi$. Hence $\Phi_i$ is an
  isomorphism. It is clear that $\lim_{i\to\infty} \Phi_{i*}\nu =
  \pi_*\nu.$
\end{proof}
An immediate consequence is the following.
\begin{cor}
  \label{cor:poisson-free-dense}
  If the action of $G$ on its Poisson boundary is essentially free,
  then $\cP^{Poisson}_\mu(\X^G)$, the set of all measures $\lambda \in
  \cP_\mu(\X^G)$ such that $G \cc (\X^G,\lambda)$ is
  measurably-conjugate to the Poisson boundary, is dense in
  $\cP_\mu(\X^G)$.
\end{cor}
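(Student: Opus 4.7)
The plan is to derive this as a near-immediate corollary of Theorem~\ref{thm:poisson-dense} by taking the extension $\bfb$ to be the Poisson boundary itself. The Poisson boundary $G \cc \Pi(G,\mu)$ is trivially an extension of itself via the identity factor map, and it is of course $\mu$-stationary. Under the hypothesis of the corollary, the action of $G$ on $\Pi(G,\mu)$ is essentially free, so $\bfb := G \cc \Pi(G,\mu)$ satisfies all the assumptions of Theorem~\ref{thm:poisson-dense}.

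Applying Theorem~\ref{thm:poisson-dense} to this choice of $\bfb$, we obtain that $\Conj(\bfb, \X^G)$ is dense in $\cP_\mu(\X^G)$. But by the very definition of $\Conj(\bfb, \X^G)$, this is precisely the set of $\nu \in \cP_\mu(\X^G)$ for which $G \cc (\X^G, \nu)$ is measurably conjugate to $G \cc \Pi(G,\mu)$, which by the definition given just before Theorem~\ref{thm:G-deltas} is exactly $\cP^{Poisson}_\mu(\X^G)$. This finishes the proof.

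There is no real obstacle here; the content of the corollary is entirely in Theorem~\ref{thm:poisson-dense}, and the role of the essential freeness hypothesis on $\Pi(G,\mu)$ is solely to license the choice $\bfb = \Pi(G,\mu)$ in that theorem. Thus the proof needs only to verify these two sentences explicitly.
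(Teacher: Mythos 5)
Your proposal is correct and is exactly the argument the paper intends: the paper states the corollary as an immediate consequence of Theorem~\ref{thm:poisson-dense}, obtained by taking $\bfb$ to be the Poisson boundary itself (which is an extension of itself and is essentially free by hypothesis), whereupon $\Conj(\bfb,\X^G)=\cP^{Poisson}_\mu(\X^G)$ by definition. Nothing further is needed.
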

See \S~\ref{sec:freeness} for a discussion of conditions which
guarantee freeness of the action of $G$ on its Poisson boundary. Next,
we observe that Theorem~\ref{thm:0-1-dense} is in a sense ``best
possible'':
\begin{prop}
  Let $\bfb=G \cc (B,\nu)$ be a stationary action. Suppose that either
  this action is not essentially free or does not have maximum
  $\mu$-entropy. Then $\Fact(\bfb,\X^G)$ is not dense in
  $\cP_\mu(\X^G)$.
\end{prop}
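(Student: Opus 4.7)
The strategy is to treat the two cases separately. In each case I would identify a quantity on $\cP_\mu(\X^G)$ that (i) is bounded by a corresponding quantity for $\bfb$ on every element of $\Fact(\bfb,\X^G)$, (ii) satisfies a one-sided semicontinuity property, and (iii) can be made strictly better by some measure that I can construct explicitly. This produces a nonempty open set disjoint from $\Fact(\bfb,\X^G)$.

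\emph{Case 1: $\bfb$ does not have maximum $\mu$-entropy.} Furstenberg entropy is monotone under $G$-equivariant factor maps (each summand $\sum_g \mu(g) D_{\mathrm{KL}}(g_*\nu\Vert \nu)$ decreases under pushforward by a data-processing argument), so every $\lambda \in \Fact(\bfb,\X^G)$ has $h_\mu(\X^G,\lambda) \le h_\mu(\bfb) < h_\mu(\Pi(G,\mu))$. By the lower semicontinuity of $\mu$-entropy (cited in the proof of Corollary~\ref{cor:ergodic}), the set
\[
U := \{\lambda \in \cP_\mu(\X^G) \csuchthat h_\mu(\X^G,\lambda) > h_\mu(\bfb)\}
\]
is open. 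To produce a point of $U$, pick any Borel injection $f : \Pi(G,\mu) \to \X$ and define $\Phi : \Pi(G,\mu) \to \X^G$ by $\Phi(b)(g) = f(g^{-1}b)$; this is $G$-equivariant, and the pushforward $\Phi_*\beta \in \cP_\mu(\X^G)$ is measurably conjugate to $\Pi(G,\mu)$, hence has $\mu$-entropy equal to $h_\mu(\Pi(G,\mu))$. So $U$ is nonempty and disjoint from $\Fact(\bfb,\X^G)$.

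\emph{Case 2: $\bfb$ is not essentially free.} Pick $g \ne e$ and a measurable $E \subseteq B$ with $\nu(E) > 0$ such that $g \cdot b = b$ for all $b \in E$. For any $G$-equivariant Borel $\pi : B \to \X^G$, equivariance gives $\pi(b) \in F_g := \{x \in \X^G \csuchthat g \cdot x = x\}$ for $b \in E$, so $\pi_*\nu(F_g) \ge \nu(E)$. Because $F_g = \bigcap_{h \in G}\{x \csuchthat x_h = x_{g^{-1}h}\}$ is closed, $\lambda \mapsto \lambda(F_g)$ is upper semicontinuous on $\cP(\X^G)$ by the portmanteau theorem; hence
\[
V := \{\lambda \in \cP_\mu(\X^G) \csuchthat \lambda(F_g) < \nu(E)\}
\]
is open. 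Theorem~\ref{thm:poisson-dense} (or any essentially free extension of $\Pi(G,\mu)$ combined with Theorem~\ref{thm:0-1-dense}) produces some $\lambda \in \cP_\mu(\X^G)$ that is measurably conjugate to such an essentially free action; in particular $\lambda(F_g) = 0$, so $V$ is nonempty and disjoint from $\Fact(\bfb,\X^G)$.

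The only real obstacle is the existence claim in step (iii) of each case, and both instances follow easily: in Case 1 from the standard Borel isomorphism of $\Pi(G,\mu)$ into $\X$ made $G$-equivariant by the orbit map, and in Case 2 from the already-proved Theorem~\ref{thm:poisson-dense}. The semicontinuity statements and the fact that factors decrease both $\mu$-entropy and the measure of fixed-point sets downward are standard, so no substantial technical difficulty is expected.
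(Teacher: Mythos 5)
Your proposal is correct and follows essentially the same route as the paper: lower semicontinuity of Furstenberg entropy together with its monotonicity under factors in the entropy case, and upper semicontinuity of $\lambda \mapsto \lambda(\Fix(g:\X^G))$ via the portmanteau theorem and closedness of the fixed-point set in the freeness case. The only difference is cosmetic — you explicitly construct the witnessing measures (a Poisson measure on $\X^G$ via the orbit map, and an essentially free measure via Theorem~\ref{thm:poisson-dense}), where the paper handles nonemptiness by citation or leaves it implicit.
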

In particular, when the action on the Poisson boundary is not free,
then the measure conjugates of the Poisson boundary are not dense.
\begin{proof}
  If $\bfb$ does not have maximum $\mu$-entropy then because
  $\mu$-entropy is lower-semicontinuous (see the proof of
  Corollary~\ref{cor:ergodic}), $\overline{\Fact(\bfb,\X^G)} \cap
  \cP^{max}_\mu(\X^G) = \emptyset$. However, $\cP^{max}_\mu(\X^G)$ is
  nonempty, since it includes a Poisson measure (see,
  e.g.,~\cite{tserunyan2012finite}).

  Suppose $\bfb$ is not essentially free. Let $\cP^{free}_\mu(\X^G)$
  be the set of all $\eta \in \cP_\mu(\X^G)$ such that $G \cc
  (\X^G,\eta)$ is essentially free. We claim that
  $\overline{\Fact(\bfb,\X^G)} \cap \cP^{free}_\mu(\X^G) =
  \emptyset$.

  Let $g\in G \setminus \{e\}$ be an element such that
  $\nu(\Fix(g:B))>0$ where $\Fix(g:B)=\{b\in B:~gb=b\}$. Observe that
  if $\lambda \in \Fact(\bfb,\X^G)$ then $\nu(\Fix(g:\X^G)) \ge
  \nu(\Fix(g:B))>0$. So it suffices to show that the map $\lambda
  \mapsto \lambda(\Fix(g:\X^G))$ is an upper semi-continuous function
  of $\lambda \in \cP_\mu( \X^G)$. This follows from the portmanteau
  Theorem because $\Fix(g:\X^G)$ is a closed subset of $\X^G$, and
  because the action $G \cc \X^G$ is continuous.
\end{proof}

The remainder of this section is devoted to the proof of
Theorem~\ref{thm:0-1-dense}. We begin with some preliminaries.

\subsection{Outline of the proof of Theorem~\ref{thm:0-1-dense}}
\label{sec:outline}

The proof of Theorem~\ref{thm:0-1-dense} uses a technique analogous to
painting names on Rohlin towers. First we use the amenability and
freeness of the action $G \cc (B,\nu)$ to show that there exist
partitions $\{\cQ_{n,b}\}_{n\in \N, b \in B}$ of $G$ satisfying:
\begin{itemize}
\item for each $n$, the assignment $b \mapsto \cQ_{n,b}$ is measurable
  and $G$-equivariant,
\item each partition element of $\cQ_{n,b}$ is finite, 
\item if $\cQ_{n,b}(g) \subset G$ denotes the partition element of $\cQ_{n,b}$ containing $g\in G$ then for every finite subset $F \subset G$, 
$$\lim_{n\to\infty} \nu(\{b\in B:~ F \subset \cQ_{n,b}(1_G)\})=1,$$
where $1_G$ is the identity element of $G$.
\end{itemize}
This sequence carries information analogous to a Rohlin tower. For
technical reasons, it is also useful to show the existence of subsets
$R_{n,b} \subset G$ such that
\begin{itemize}
\item the assignment $b \mapsto R_{n,b}$ is measurable and
  $G$-equivariant,
\item $R_{n,b}$ contains exactly one element from each partition element of $\cQ_{n,b}$.
\end{itemize}
Elements of $R_{n,b}$ are {\em roots} of the partition elements of $\cQ_{n,b}$.

The partitions $\cQ_{n,b}$ and subsets $R_{n,b}$ induce a natural partition of a special subset of $B$. Namely, we define
\begin{itemize}
\item $B_n = \{b \in B:~ 1_G \in R_{n,b} \}$,
\item $\cP_n$ to be the partition of $B_n$ defined by: $b,b' \in B_n$ are in the same partition element of $\cP_n$ if and only if $\cQ_{n,b}(1_G) = \cQ_{n,b'}(1_G)$,
\item $\psi_n:\cP_n \to 2^G$ by $\psi_n(P) = \cQ_{n,b}(1_G)$ for any $b \in P$.
\end{itemize}

Now let $\theta \in \cP_\mu(W^G)$. It suffices to construct $G$-equivariant measurable maps $\pi_n:B \to W^G$ such that 
$$\lim_{n\to\infty} \pi_{n*} \nu = \theta.$$
To achieve this, we first decompose $\theta$ and $\nu$ using the
Poisson boundary. To be precise, let $\alpha:B \to \Pi(G,\mu)$ be a
factor map to the Poisson boundary and define a map $B \to
\cP(W^G)$, $b \mapsto \theta_b$ by
\begin{eqnarray}\label{eqn:theta_b}
\theta_b = \lim_{n\to\infty} g_n\theta
\end{eqnarray}
where $\{g_n\}$ is any sequence in $G$ with
$\bnd(\{g_n\})=\alpha(b)$. Here $\bnd$ is the map that assigns to
almost every sequence in $(G^\N,\cP_\mu)$ the corresponding point in
the Poisson boundary. This is well defined since $\lim_{n\to\infty}
g_n\theta$ is measurable in the shift-invariant sigma-algebra of
$(G^\N,\cP_\mu)$, and therefore depends only on $\bnd(\{g_n\})$.

Because the map $\pi_n$ must be $G$-equivariant, it suffices to define
it on the subset $B_n$ (since this subset intersects every $G$-orbit
nontrivially). To define $\pi_n(b)$ (for $b\in B_n$), the rough idea
is to take an element $x \in W^G$ which is ``typical'' with respect to
$\theta_b$ and choose $\pi_n(b)(g)=x(g)$ for $g \in \psi_n(b)$. We use
equivariance to define $\pi_n(b)$ on the rest of $G$. The element $x$
should be a measurable function of the element $b$. This means we must
choose a map $\beta_{n,P}:P \to W^G$ (for each $P \in \cP_n$) such
that
$$\beta_{n,P*}(\nu \res P) \sim \int_P \theta_b~d\nu(b)$$
where $\sim$ means close in total variation norm. Actually, this is not good enough because we need a good approximation on translates of $B_n$. So what we really require is that
\begin{eqnarray}\label{eqn:key-dense}
  g^{-1}_*\beta_{n,P*}(g_*\nu \res P) \sim \int_{g^{-1}P} \theta_b~d\nu(b)
\end{eqnarray}
for all $g\in \psi_n(P)$. Then we define $\pi_n(b)(g)=\beta_{n,P}(g)$ for $b\in P$ and $g \in \psi_n(b)$. It remains only to verify that $\pi_n$ has the required properties.


\subsection{A random rooted partition from amenability}

Let $\Part(G)$ be the set of all (unordered) partitions of $G$. We may
identify a partition $\cP \in \Part(G)$ with the equivalence relation
that it determines. Any equivalence relation on $G$ is a subset of $G\times G$ and therefore may be identified with an element of
$2^{G\times G}$ which is a compact metric space in the product
topology. So we may view $\Part(G)$ as a subset of $2^{G\times G}$ and
give it the subspace topology. In this topology, it is a compact
metrizable space. Also, $G$ acts continuously on $\Part(G)$ by $g\cP =
\{gP:~P \in \cP\}$.

\begin{lem}
  \label{lem:rohlin}
  Let $G \cc (B,\nu)$ be an essentially free, measure preserving
  extension of the Poisson boundary. Then there exist partitions
  $\cQ_{n,b} \in \Part(G)$ and subsets $R_{n,b} \subset G$ that have
  the properties detailed in \S~\ref{sec:outline}.
\end{lem}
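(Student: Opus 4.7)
The plan is to build $\cQ_{n,b}$ and $R_{n,b}$ from an increasing sequence of finite Borel subequivalence relations exhausting the orbit equivalence relation of $G \cc (B,\nu)$. The starting point is that $G \cc (B,\nu)$ is Zimmer-amenable: the Poisson boundary $G \cc \Pi(G,\mu)$ is amenable by Furstenberg--Zimmer, and amenability passes to extensions. By the Connes--Feldman--Weiss theorem in the nonsingular setting, the orbit equivalence relation $\cR_G = \{(b, gb) : b\in B, g \in G\}$ is hyperfinite (mod $\nu$): there exist Borel equivalence relations $\cR_1 \subset \cR_2 \subset \cdots$ on $B$, all contained in $\cR_G$, with each $\cR_n$-class finite and $\bigcup_n \cR_n = \cR_G$ modulo $\nu$-null sets.

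Next, using the essential freeness, I define for each $b$ and $n$ an equivalence relation on $G$ by
\[
g \sim_{n,b} h \iff g^{-1}b \, \cR_n \, h^{-1}b,
\]
and let $\cQ_{n,b}$ be the resulting partition of $G$. Each class $\cQ_{n,b}(h) = \{g : g^{-1}b \,\cR_n\, h^{-1}b\}$ is in bijection (via $g \mapsto g^{-1}b$, which is injective a.e.\ by freeness) with the finite $\cR_n$-class of $h^{-1}b$, hence finite. For equivariance, $(h_1,h_2) \in \cQ_{n,gb}$ iff $h_1^{-1}gb\, \cR_n\, h_2^{-1}gb$ iff $(g^{-1}h_1)^{-1}b\,\cR_n\,(g^{-1}h_2)^{-1}b$ iff $(g^{-1}h_1, g^{-1}h_2) \in \cQ_{n,b}$, so $\cQ_{n,gb} = g\,\cQ_{n,b}$. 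Measurability follows because the map $b \mapsto \cR_n\cap (B\times B)$ is Borel and the action of $G$ on $B$ is Borel. For the density property, note that $F \subset \cQ_{n,b}(1_G)$ iff $f^{-1}b\,\cR_n\, b$ for all $f \in F$; since $\bigcup_n \cR_n = \cR_G$ and $(f^{-1}b, b) \in \cR_G$ always, monotone convergence gives $\nu(\{b : f^{-1}b\,\cR_n\,b\}) \to 1$ for each $f$, and a finite intersection preserves this.

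For the roots, since each $\cR_n$ is a countable Borel equivalence relation with finite classes, it admits a Borel selector: a measurable map $\rho_n : B \to B$ such that $\rho_n(b) \in [b]_{\cR_n}$ and $\rho_n(b) = \rho_n(b')$ whenever $b\,\cR_n\,b'$. I then set
\[
R_{n,b} = \{ g \in G : g^{-1}b = \rho_n(g^{-1}b)\}.
\]
Essential freeness makes this well-defined and ensures that exactly one element of $R_{n,b}$ lies in each class $\cQ_{n,b}(h)$, namely the unique $g$ with $g^{-1}b = \rho_n(h^{-1}b)$. Equivariance is a direct check: $h \in R_{n,gb}$ iff $(hg^{-1})^{-1}\cdots$, or more cleanly, $h \in gR_{n,b}$ iff $(g^{-1}h)^{-1}b = \rho_n((g^{-1}h)^{-1}b)$ iff $h^{-1}gb = \rho_n(h^{-1}gb)$ iff $h \in R_{n,gb}$.

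The main obstacle is the appeal to Connes--Feldman--Weiss: one must ensure it applies to the nonsingular action $G \cc (B,\nu)$ (not just a measure-preserving one), and that amenability of the Poisson boundary action really transfers to $(B,\nu)$ as an extension in the sense that yields hyperfiniteness of the orbit relation. Both are standard (the nonsingular Connes--Feldman--Weiss theorem is in the original paper of Connes--Feldman--Weiss, and amenability is preserved under extensions by Zimmer), so once these are cited, the rest of the construction is essentially bookkeeping about equivariance and measurability using the freeness of the action.
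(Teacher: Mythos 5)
Your proposal is correct and follows essentially the same route as the paper: amenability of $G\cc(B,\nu)$ via Zimmer-amenability of the Poisson boundary plus stability under extensions, hyperfiniteness of the orbit relation (the paper invokes this without naming Connes--Feldman--Weiss, but that is the theorem being used), the same definition of $\cQ_{n,b}$ via $g\sim h \iff g^{-1}b\,\cR_n\,h^{-1}b$, and roots obtained from a Borel transversal/selector for $\cR_n$, which are interchangeable. Your explicit use of essential freeness to see that each class of $\cQ_{n,b}$ is finite is a point the paper leaves implicit, but otherwise the arguments coincide.
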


In fact, as the proof below shows, this lemma holds for any
essentially free amenable $G$-action.

\begin{proof}

The action of $G$ on its Poisson boundary is amenable in Zimmer's sense~\cite{zimmer1978amenable}. Because extensions of amenable actions are amenable,  $G\cc (B,\nu)$ is amenable.
 
 Let $E_G \subset B \times B$ denote the equivalence relation
  $$E_G=\{ (b,gb):~b\in B, g\in G\}.$$
 Because the action of $G$ on $(B,\nu)$ is amenable, $E_G$ is
  hyperfinite. This means that there exists a sequence
  $\{\cR_n\}_{n=1}^\infty$ of Borel equivalence relations $\cR_n
  \subset B \times B$ such that
  \begin{itemize}
  \item for a.e. $b \in B$ and every $n$, the $\cR_n$-class of $b$,
    denoted $[b]_{\cR_n}$, is finite,
  \item $\cR_n \subset \cR_{n+1}$ for all $n$,
  \item $E_G = \cup_n \cR_n$.
  \end{itemize}
  
  Define $\cQ_{n,b}$ by: $g_1,g_2$ are in the same part of $\cQ_{n,b}$ if and only if $g_1^{-1}b \cR_n g_2^{-1}b$. Because $\cR_n$ is Borel, the assignment $b \mapsto \cQ_{n,b}$ is also Borel, hence measurable. For any $h\in G$, $g_1,g_2$ are in the same part of $\cQ_{n,hb}$ if and only if $g_1^{-1}hb \cR_n g_2^{-1}hb$ which occurs if and only if $h^{-1}g_1, h^{-1}g_2$ are in the same part of $\cQ_{n,b}$.  So $\cQ_{n,hb} = h\cQ_{n,b}$ as required.
  
 Because each $\cR_n$-class is finite, each part of $\cQ_{n,b}$ is finite. Let $F \subset G$ be finite. Because $E_G = \cup_n \cR_n$ is an increasing union the probability that, for a randomly chosen $b\in B$, $\{fb\}_{f\in F}$ is contained in an $\cR_n$-equivalence class, tends to 1 as $n\to\infty$. Equivalently,
 $$\lim_{n\to\infty} \nu(\{b\in B:~ F \subset \cQ_{n,b}(1_G)\})=1,$$
 and so the $\cQ_{n,b}$ have the required properties.
 
 As another consequence of the fact that each $\cR_n$-class is finite,
 there exist measurable subsets $B_n \subset B$ such that for a.e.\
 $b\in B$, $B_n \cap [b]_{\cR_n}$ contains exactly one element. So
 define $R_{n,b} := \{g\in G:~g^{-1}b \in B_n\}$. To check that
 $R_{n,b}$ contains exactly one element from each partition element of
 $\cQ_{n,b}$, observe that if $P$ is any part of $\cQ_{n,b}$ and $g\in
 P$ then there exists a unique $b' \in B_n$ such that $g^{-1}b \cR_n
 b'$. Because the action of $G$ on $B$ is essentially free, there is a
 unique $g_0 \in G$ such that $b' = g_0^{-1}b$. So $g_0$ is the unique
 element of $R_{n,b} \cap P$.
 \end{proof}

\subsection{Painting names}
 
Following the outline in \S \ref{sec:outline}, we let $\theta \in
\cP_\mu(W^G)$ and define $\theta_b$ (for $b\in B$) by equation
(\ref{eqn:theta_b}). In this subsection, we choose $\beta_{n,b}$ to
satisfy (\ref{eqn:key-dense}). First we need to recall some basic
facts about total variation distance.

Let $\lambda_1,\lambda_2$ be Borel measures on a space $Z$. Their
{\em total variation distance} is defined by
$$\| \lambda_1 - \lambda_2 \| = \sup_A |\lambda_1(A) - \lambda_2(A)|$$
where the supremum is over all Borel subsets $A \subset Z$. We will
need two elementary facts:
\begin{claim}
  \label{defn:tv}
  \begin{enumerate}
  \item Suppose there exists a measure $\lambda_3$ such that
    $\lambda_1,\lambda_2$ are both absolutely continuous to
    $\lambda_3$. Then
    $$\| \lambda_1 - \lambda_2\| \le \int \left| \frac{d\lambda_1}{d\lambda_3}(z)- \frac{d\lambda_2}{d\lambda_3}(z)\right| ~d\lambda_3(z).$$
  \item If $\Phi:Z \to Z'$ is any Borel map then $\|\Phi_*\lambda_1 - \Phi_*\lambda_2\| \le \|\lambda_1 - \lambda_2\|$.
  \end{enumerate}
\end{claim}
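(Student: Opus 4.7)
The plan is to treat the two parts separately, as both reduce to direct manipulation of the definition of total variation.

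For part (1), I would start from the observation that for any Borel set $A \subset Z$, the assumption of absolute continuity gives
\[
\lambda_1(A) - \lambda_2(A) = \int_A \left(\frac{d\lambda_1}{d\lambda_3}(z) - \frac{d\lambda_2}{d\lambda_3}(z)\right) d\lambda_3(z).
\]
Taking absolute values, pulling them inside the integral, and then enlarging the domain from $A$ to all of $Z$ yields the pointwise $L^1$ bound
\[
|\lambda_1(A) - \lambda_2(A)| \le \int \left|\frac{d\lambda_1}{d\lambda_3}(z) - \frac{d\lambda_2}{d\lambda_3}(z)\right| d\lambda_3(z).
\]
Since this bound is uniform in $A$, taking the supremum on the left gives the claim.

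For part (2), I would unwind the definition of pushforward: for any Borel set $A' \subset Z'$, one has $\Phi_*\lambda_i(A') = \lambda_i(\Phi^{-1}(A'))$ for $i=1,2$, where $\Phi^{-1}(A')$ is Borel because $\Phi$ is Borel measurable. Hence
\[
|\Phi_*\lambda_1(A') - \Phi_*\lambda_2(A')| = |\lambda_1(\Phi^{-1}(A')) - \lambda_2(\Phi^{-1}(A'))| \le \|\lambda_1 - \lambda_2\|,
\]
where the inequality uses that $\Phi^{-1}(A')$ is one particular Borel set over which the supremum defining $\|\lambda_1 - \lambda_2\|$ is taken. Taking the supremum over $A' \subset Z'$ on the left yields $\|\Phi_*\lambda_1 - \Phi_*\lambda_2\| \le \|\lambda_1 - \lambda_2\|$.

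Both steps are routine and there is no real obstacle; the only minor point worth mentioning is Borel measurability of $\Phi^{-1}(A')$ in part (2), which is automatic from the assumption that $\Phi$ is Borel. No further machinery is required.
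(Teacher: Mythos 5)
Your proof is correct; both parts are the standard elementary arguments, and the paper itself states these as "elementary facts" without supplying a proof, so your argument simply fills in exactly what the authors intended.
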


Define $B_n,\cP_n,\psi_n$ as in \S \ref{sec:outline}.

\begin{lem}\label{lem:qP}
  For every $P \in \cP_n$ and $\delta_P>0$ there exists a measurable
  map $\beta_{n,P}:P \to W^G$ such that
  $$\left\|g^{-1}_*\beta_{n,P*}(g_*\nu \res P) - \int_{g^{-1}P} \theta_b~d\nu(b)\right\| < \delta_P$$
  for every $g\in \psi_n(P)$.
\end{lem}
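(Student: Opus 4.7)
The strategy is to build $\beta_{n,P}$ so that $\beta_{n,P}(c)$ behaves like a measurable ``sample'' from the measure $\theta_c$. The key ingredient is the transformation identity
\[
g_*\theta_b \;=\; \theta_{gb} \quad \text{for every } g \in G \text{ and } \nu\text{-a.e.\ } b \in B,
\]
which is immediate from the definition $\theta_b = \lim_n (x_n)_*\theta$ (with $\{x_n\}$ any sequence satisfying $\bnd(\{x_n\}) = \alpha(b)$) together with weak${}^*$-continuity of the $G$-action on $\cP(W^G)$. Using this identity after the change of variables $c = gb$ and writing $\rho_g := d(g_*\nu)/d\nu$, the target measure rewrites as $\int_{g^{-1}P} \theta_b\, d\nu(b) = g^{-1}_*\int_P \theta_c\, \rho_g(c)\, d\nu(c)$. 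Because the action of $g^{-1}$ preserves total variation, the inequality to be proved reduces to
\[
\Bigl\|\, \beta_{n,P*}(\rho_g\, d\nu\res P) \;-\; \int_P \theta_c\, \rho_g(c)\, d\nu(c)\, \Bigr\| \;<\; \delta_P, \qquad g \in Q. \qquad(\ast)
\]

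I would construct $\beta_{n,P}$ in three steps. (i) By the measurable selection theorem applied to the inverse CDFs of the measures $\theta_c$, produce a Borel map $\Psi : P \times [0,1] \to W^G$ with $\Psi(c,\cdot)_*\mathrm{Leb} = \theta_c$ for every $c$. (ii) Let $\cG$ be the countably generated $\sigma$-algebra on $P$ generated by the Borel maps $c \mapsto \theta_c$ and $c \mapsto \rho_g(c)$ (for $g \in Q$); construct a Borel $\phi : P \to [0,1]$ whose conditional distribution given $\cG$ under $\nu\res P$ is approximately Lebesgue. (iii) Set $\beta_{n,P}(c) := \Psi(c, \phi(c))$. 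A direct Fubini computation shows that if $\phi$ were \emph{exactly} conditionally uniform given $\cG$, then $(\ast)$ would hold with equality for every $g \in Q$ simultaneously: the disintegration of $\beta_{n,P*}(\rho_g\, d\nu\res P)$ over $\cG$ equals $\int \theta_\xi\, \rho_g(\xi)\, \nu_\xi(P)\, d(\nu|_\cG)(\xi)$, which matches the disintegration of the target because each $\rho_g$ is $\cG$-measurable by the very definition of $\cG$.

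The main obstacle is step (ii): constructing $\phi$ with TV-good approximate conditional uniformity. My plan is a Lusin-type finite approximation---partition $P = E_0 \sqcup E_1 \sqcup \cdots \sqcup E_k$ with $\nu(E_0) < \varepsilon$ such that the $\cG$-generators are suitably controlled on each $E_i$ ($i \ge 1$), and then use non-atomicity of $\nu\res E_i$ to define $\phi$ on $E_i$ to be exactly uniform on $[0,1]$. The crucial saving, which avoids running the argument separately for each $g \in Q$, is that each $\rho_g$ is $\cG$-measurable; hence $\cG$-independence of $\phi$ under $\nu\res P$ transfers automatically to $\cG$-independence under each weighted measure $\rho_g\, d\nu\res P$, reducing $|Q|$ conditions to one. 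The exceptional contribution from $E_0$ is bounded by $\varepsilon \max_{g\in Q}\|\rho_g\|_{L^1(\nu\res E_0)}$, and the within-piece error comes from the Fubini identity above evaluated against the partition-induced coarsening of $\cG$; both can be made smaller than $\delta_P$ by a suitable choice of $\varepsilon$ and the partition.
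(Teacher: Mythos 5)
Your reduction is exactly the one the paper makes: the equivariance $g_*\theta_b=\theta_{gb}$ plus the change of variables $c=gb$ turns the statement into $\left\|\beta_{n,P*}(g_*\nu\res P)-\int_P\theta_c\,dg_*\nu(c)\right\|<\delta_P$, and (like the paper) you handle all $g\in\psi_n(P)$ with a single partition by exploiting the fact that the densities $dg_*\nu/d\nu$ are measurable with respect to the same data. The divergence, and the problem, is in the construction of $\beta_{n,P}$.

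The architecture $\beta_{n,P}(c)=\Psi(c,\phi(c))$ with $\Psi(c,\cdot)_*\mathrm{Leb}=\theta_c$ forces $\phi$ to be (in a total-variation sense) conditionally uniform given $\cG\supseteq\sigma(c\mapsto\theta_c)$, and this is not achievable in general. If $c\mapsto\theta_c$ is injective, $\cG$ is the full Borel $\sigma$-algebra mod null sets, the disintegration of $\nu\res P$ over $\cG$ is purely atomic, and the conditional law of any Borel $\phi$ is a point mass --- at TV distance $1$ from Lebesgue. Your finite-partition substitute does not repair this: making $\phi$ exactly uniform on each $E_i$ leaves a within-piece error governed by the total-variation oscillation of $c\mapsto\theta_c$ on $E_i$, and no Lusin/Egorov argument makes that small, since $c\mapsto\theta_c$ is only weak$^*$-Borel while $(\cP(W^G),\mathrm{TV})$ is non-separable. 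Concretely, for a family of the form $\theta_c=\frac12\delta_{w_0}+\frac12\delta_{x(c)}$ with $x$ Borel and injective, \emph{every} map of the form $c\mapsto\Psi(c,\phi(c))$ satisfies $\left\|\beta_{n,P*}(\nu\res P)-\int_P\theta_c\,d\nu(c)\right\|\geq\nu(P)/4$ (test on $x(A)$ and $x(A^c)$, where $A=\{c:\beta_{n,P}(c)=w_0\}$, using that $\ind_A$ is at $L^1$-distance $\frac12\nu(P)$ from the constant $\frac12$); so the approach fails already for $g=1_G$. The paper sidesteps this entirely by \emph{not} sampling from $\theta_c$ pointwise: it chooses a countable partition $\cK$ of $P$ only so that each density $dg_*\nu/d\nu$, $g\in\psi_n(P)$, is $L^1$-approximated by constants $C(g,K)$, and on each $K$ defines $\beta_{n,P}$ to push $\nu\res K$ \emph{exactly} onto the average $\int_K\theta_b\,d\nu(b)$ (possible since $\nu\res K$ is non-atomic and the target is an arbitrary Borel probability measure); a three-term triangle inequality then bounds the error by the oscillation of the densities alone. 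Replacing your ``sample from $\theta_c$'' step by ``push $\nu\res E_i$ onto the piece-average of $\theta_\cdot$'' turns your argument into the paper's.
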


\begin{proof}
  Let $\cK$ be a countable partition of $P$ such that for every $g\in
  \psi_n(P)$
  $$\sum_{K\in \cK} \int_K \left| \frac{dg_*\nu}{d\nu}(b) - C(g,K) \right|~d\nu(b) < \delta_P/2$$
  for some constants $\{C(g,K):~g\in \psi_n(P), K \in \cK\}$. Such a
  partition exists because step functions are dense in
  $L^1(B,\nu)$. By fact (1) of Claim~\ref{defn:tv},
  $$\sum_{K \in \cK} \|  (g_*\nu \res K) - C(g,K)(\nu \res K)  \| < \delta_P/2.$$
  Choose a measurable map $\beta_{n,P}:P \to W^G$ so that
  $$\beta_{n,P*}(\nu \res K) = \int_K \theta_b~d\nu(b)$$
  for every $K \in \cK$. Here we are using the fact that $(B,\nu)$ has
  no atoms, which holds because the action $G \cc (B,\nu)$ is
  stationary and essentially free, and because $G$ is countably
  infinite\footnote{A finite, essentially free stationary measure
    cannot be atomic, since in any finite stationary measure the
    finite set of atoms of maximal measure must be invariant, and thus
    have non-trivial stabilizers.}
  . 

  Then for any $g\in \psi_n(P)$,
  \begin{eqnarray*}
    &&\left\|\beta_{n,P*}(g_*\nu \res P) - \int_P \theta_b~dg_*\nu(b) \right\| \\
    &\le& \sum_{K\in \cK} \left\|\beta_{n,P*}(g_*\nu \res K) - \int_K \theta_b~dg_*\nu(b)\right\| \\
    &\le& \sum_{K\in \cK} \left\|\beta_{n,P*}(g_*\nu \res K) - C(g,K) \beta_{n,P*}(\nu \res K)\right\| + \left\|C(g,K)\beta_{n,P*}(\nu \res K) - C(g,K)\int_K \theta_b~d\nu(b)\right\| \\
    &&\quad + \left\|C(g,K)\int_K \theta_b~d\nu(b) - \int_K \theta_b~dg_*\nu(b)\right\| \\
    &<& \delta_P/2 + 0 + \delta_P/2 = \delta_P.
  \end{eqnarray*}
  The lemma now follows from fact (2) of Claim~\ref{defn:tv} and
  $$g^{-1}_*\left(\int_P \theta_b~dg_*\nu(b)\right) = \int_{g^{-1}P}\theta_b ~d\nu(b).$$

\end{proof}

\subsection{End of the proof}

Define $\pi_n(b)(g)=\beta_{n,P}(g)$ for $b\in P \in \cP_n$ and $g \in
\psi_n(b)$. 

\begin{lem}
There exists a unique $G$-equivariant extension of $\pi_n$ from $B$ to $W^G$.
\end{lem}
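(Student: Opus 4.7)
The plan is to leverage the $G$-equivariance of the assignments $b \mapsto \cQ_{n,b}$ and $b \mapsto R_{n,b}$, together with the fact that $R_{n,b}$ picks out one representative from each part of $\cQ_{n,b}$, to show that $\pi_n$ admits a unique $G$-equivariant extension. For a.e.\ $b \in B$ and each $h \in G$, I will let $r_h(b) \in R_{n,b}$ denote the unique root lying in the part $\cQ_{n,b}(h)$. Because $R_{n,b} = \{g \in G : g^{-1}b \in B_n\}$, the point $r_h(b)^{-1}b$ lies in $B_n$. Moreover, since $h \in \cQ_{n,b}(r_h(b))$, applying $r_h(b)^{-1}$ and using $\cQ_{n,g^{-1}b} = g^{-1}\cQ_{n,b}$ shows that $r_h(b)^{-1}h \in \cQ_{n,r_h(b)^{-1}b}(1_G) = \psi_n(P)$, where $P$ is the $\cP_n$-class of $r_h(b)^{-1}b$. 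So $\pi_n(r_h(b)^{-1}b)(r_h(b)^{-1}h)$ is already defined by the formula preceding the lemma.

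I would therefore set
$$\pi_n(b)(h) \; := \; \pi_n\bigl(r_h(b)^{-1}b\bigr)\bigl(r_h(b)^{-1}h\bigr).$$
Four routine checks complete the proof. First, this agrees with the original definition on $B_n$: if $b \in B_n$ and $h \in \psi_n(b) = \cQ_{n,b}(1_G)$, then $1_G$ itself is the unique element of $R_{n,b}$ contained in $\cQ_{n,b}(1_G)$, so $r_h(b) = 1_G$. Second, measurability is automatic, since $b \mapsto (\cQ_{n,b}, R_{n,b})$ is measurable and $r_h(b)$ depends Borel-measurably on $b$. Third, for $G$-equivariance, the identities $\cQ_{n,gb} = g\cQ_{n,b}$ and $R_{n,gb} = gR_{n,b}$ give $r_h(gb) = g\,r_{g^{-1}h}(b)$, so
$$\pi_n(gb)(h) \;=\; \pi_n\bigl(r_h(gb)^{-1}gb\bigr)\bigl(r_h(gb)^{-1}h\bigr) \;=\; \pi_n\bigl(r_{g^{-1}h}(b)^{-1}b\bigr)\bigl(r_{g^{-1}h}(b)^{-1}g^{-1}h\bigr) \;=\; \pi_n(b)(g^{-1}h).$$

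For uniqueness, suppose $\tilde\pi_n$ is any $G$-equivariant extension. For any $r \in G$, equivariance forces $\tilde\pi_n(b)(h) = \tilde\pi_n(r^{-1}b)(r^{-1}h)$; if in addition $r^{-1}b \in B_n$ and $r^{-1}h \in \psi_n(r^{-1}b)$, the right-hand side equals the prescribed value $\beta_{n,P}(r^{-1}b)(r^{-1}h)$. Both conditions hold for $r = r_h(b)$ by the remarks above, so $\tilde\pi_n$ is forced to coincide with the extension defined here. I do not anticipate a serious obstacle; the only mildly delicate step is the bookkeeping for the identity $r_h(gb) = g\,r_{g^{-1}h}(b)$, which is a direct consequence of the equivariance of the partition and root data.
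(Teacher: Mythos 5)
Your proof is correct and follows essentially the same route as the paper: both use the unique root $r_h(b)\in R_{n,b}\cap\cQ_{n,b}(h)$ to transport the value back to a point of $B_n$ and a coordinate in $\psi_n$, the paper merely splitting this into two steps (first extending over $G$ for $b\in B_n$, then to general $b$ via the root of $\cQ_{n,b}(1_G)$). Your single-formula version, with the explicit identity $r_h(gb)=g\,r_{g^{-1}h}(b)$ and the uniqueness check, is if anything slightly more complete than the paper's sketch.
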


\begin{proof}


Suppose $h \in G$, $b\in B_n$. We will define $\pi_n(b)(h)$ as follows. Let $Q \in \cQ_{n,b}$ be the part containing $h$. There exists a unique $g\in R(n,b) \cap Q$. Because $R_{n,g^{-1}b} = g^{-1}R_{n,b} \ni 1_G$, it follows that $g^{-1}b \in B_n$. By definition, $\psi_n(g^{-1}b)$ is the part of $\cQ_{n,g^{-1}b}=g^{-1}\cQ_{n,b}$ containing $1_G$. So $\psi_n(g^{-1}b) =g^{-1}Q$ contains $g^{-1}h$. So $\pi_n(g^{-1}b)(g^{-1}h)$ is well-defined. We now define
$$\pi_n(b)(h) = \pi_n(g^{-1}b)(g^{-1}h)$$
and observe that we have now defined $\pi_n$ on $B_n$ so that $\pi_n(gb)=g\pi_n(b)$ whenever $g\in G$ and $b,gb\in B_n$. 

Next we define $\pi_n$ for arbitrary $b\in B$ as follows. Let $Q$ be the part of $\cQ_{n,b}$ with $1_G \in Q$. Let $g\in G$ be the unique element in $R_{n,b} \cap Q$. Because $R_{n,g^{-1}b} = g^{-1}R_{n,b} \ni 1_G$, it follows that $g^{-1}b \in B_n$. So $\pi_n(g^{-1}b)$ is well-defined. We now define $\pi_n(b) = g\pi_n(g^{-1}n)$. This is the unique $G$-equivariant extension of $\pi_n$.
\end{proof}

\begin{lem}\label{lem:partition}
  $\{g^{-1}P\,:\,P \in \cP_n, g \in \psi_n(P)\}$ is a partition of $B$  (up to measure zero).
\end{lem}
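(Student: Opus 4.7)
The plan is to show that for $\nu$-almost every $b \in B$ there exists exactly one pair $(P,g)$ with $P \in \cP_n$, $g \in \psi_n(P)$, and $b \in g^{-1}P$. The key idea is to translate both membership conditions into a single condition on $g^{-1}$, using the $G$-equivariance of the assignments $b\mapsto \cQ_{n,b}$ and $b\mapsto R_{n,b}$ provided by Lemma~\ref{lem:rohlin}, and then to invoke the fact that $R_{n,b}$ meets each block of $\cQ_{n,b}$ in exactly one point.

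First I would unwind $b \in g^{-1}P$. This says $gb \in P \subset B_n$, and by the definition $R_{n,b} = \{h \in G : h^{-1}b \in B_n\}$ this is equivalent to $g^{-1}\in R_{n,b}$. When it holds, $P$ is forced to be the $\cP_n$-class of $gb$, and $\psi_n(P) = \cQ_{n,gb}(1_G)$ by the definition of $\psi_n$. Using the equivariance $\cQ_{n,gb}=g\cQ_{n,b}$, one checks that
\[
\cQ_{n,gb}(1_G) \;=\; g\cdot \cQ_{n,b}(g^{-1}),
\]
so the requirement $g\in\psi_n(P)$ becomes $1_G\in\cQ_{n,b}(g^{-1})$, i.e.\ $g^{-1}\in\cQ_{n,b}(1_G)$.

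Combining the two simplifications, the pair $(P,g)$ exists and satisfies $b\in g^{-1}P$ if and only if
\[
g^{-1} \;\in\; R_{n,b}\cap \cQ_{n,b}(1_G),
\]
with $P$ then determined as the $\cP_n$-class of $gb$. By Lemma~\ref{lem:rohlin}, $R_{n,b}$ contains exactly one element of each block of $\cQ_{n,b}$, so this intersection is a singleton for $\nu$-a.e.\ $b$. This simultaneously yields existence and uniqueness of $g$, and hence of $P$, proving that the sets $\{g^{-1}P : P\in\cP_n,\ g\in\psi_n(P)\}$ partition $B$ modulo a null set.

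I do not anticipate a real obstacle: the lemma is a bookkeeping verification that the partitions $\cP_n$ on the cross-sections $B_n$ translate, via the group action, into a genuine partition of $B$. The only point requiring care is to work on the full-measure set where $\cQ_{n,b}$, $R_{n,b}$, $\psi_n$ and the $\cP_n$-class of $b$ are all well-defined and simultaneously satisfy the equivariance identity $\cQ_{n,gb}=g\cQ_{n,b}$; outside a null set everything is automatic.
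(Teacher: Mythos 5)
Your argument is correct and is exactly the direct verification the paper has in mind (the paper omits the proof, remarking only that it "follows directly from the definitions"). The reduction of the two membership conditions to the single statement $g^{-1}\in R_{n,b}\cap \cQ_{n,b}(1_G)$, which is a singleton a.e.\ by Lemma~\ref{lem:rohlin}, cleanly gives both existence and uniqueness, and your caveat about restricting to the conull $G$-invariant set where equivariance and the transversal property hold is the only point of care.
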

The proof of this lemma follows directly from the definitions, and
hence we omit it.

\begin{lem}
  \label{clm:total-variation}
  If $P \in \cP_n$, $F=\psi_n(P) \subset G$ and $g \in F$ then
  $$\left\|\Proj_{W^{g^{-1}F}}(\pi_{n*}(\nu \res g^{-1}P)) - \Proj_{W^{g^{-1}F}}(\theta_{g^{-1}P})\right\|< \delta_P$$
  where $\theta_P = \int_P \theta_b~d\nu(b).$
\end{lem}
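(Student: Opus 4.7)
The strategy is to reduce the claim to Lemma~\ref{lem:qP} by applying a measurable projection to both sides of the inequality there, invoking the contractive property of pushforwards on total variation distance (Claim~\ref{defn:tv}(2)). The key observation, read off from the definition of $\pi_n$, is that for every $b\in P$ and every $h\in F=\psi_n(b)$ one has $\pi_n(b)(h)=\beta_{n,P}(b)(h)$; equivalently, $\Proj_{W^F}\circ \pi_n\res P = \Proj_{W^F}\circ\beta_{n,P}$.

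First I would rewrite the left-hand side using $G$-equivariance of $\pi_n$. Since $\nu\res g^{-1}P = g^{-1}_*(g_*\nu\res P)$ and $\pi_n\circ g^{-1} = g^{-1}\circ\pi_n$ (here $g^{-1}$ acts on $W^G$ by the shift), one obtains
\[
\pi_{n*}(\nu\res g^{-1}P) \;=\; g^{-1}_*\,\pi_{n*}(g_*\nu\res P).
\]
Applying the projection $\Proj_{W^{g^{-1}F}}$ and using that this projection intertwines the shift by $g^{-1}$ with the canonical bijection $\rho_g: W^F\to W^{g^{-1}F}$, $\rho_g(y)(h)=y(gh)$, we get
\[
\Proj_{W^{g^{-1}F}}\bigl(\pi_{n*}(\nu\res g^{-1}P)\bigr) \;=\; \rho_{g*}\Proj_{W^F}\bigl(\pi_{n*}(g_*\nu\res P)\bigr) \;=\; \rho_{g*}\Proj_{W^F}\bigl(\beta_{n,P*}(g_*\nu\res P)\bigr),
\]
where the second equality uses the key observation above. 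Running the same identity backwards, this equals $\Proj_{W^{g^{-1}F}}\bigl(g^{-1}_*\beta_{n,P*}(g_*\nu\res P)\bigr)$.

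Next, Lemma~\ref{lem:qP} yields
\[
\bigl\|\,g^{-1}_*\beta_{n,P*}(g_*\nu\res P) - \theta_{g^{-1}P}\,\bigr\| < \delta_P,
\]
and pushing both measures forward by the (measurable) projection $\Proj_{W^{g^{-1}F}}$ preserves this bound by Claim~\ref{defn:tv}(2). Combining with the identification from the previous paragraph gives exactly the claimed inequality.

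The computation is essentially pure bookkeeping; the only nontrivial point is the reconciliation of the shift $g^{-1}_*$ on $W^G$ with the projections to $W^F$ versus $W^{g^{-1}F}$, which is handled cleanly by the renaming bijection $\rho_g$ and therefore presents no real obstacle.
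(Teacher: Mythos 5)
Your proposal is correct and follows essentially the same route as the paper: both reduce the claim to Lemma~\ref{lem:qP} by using the $G$-equivariance of $\pi_n$ together with the fact that $\pi_n(b)$ agrees with $\beta_{n,P}(b)$ on $\psi_n(P)$ for $b\in P$, and then apply the contractivity of pushforwards under total variation (Claim~\ref{defn:tv}(2)). The paper phrases the identification pointwise, as $\pi_n(b)(h)=g^{-1}\beta_{n,P}(gb)(h)$ for $h\in g^{-1}F$, while you package the same bookkeeping into the renaming bijection $\rho_g$; the content is identical.
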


\begin{proof}
Let $b \in P$ be random with law $\nu \res g^{-1}P$
  (normalized to have mass 1). Because $\pi_n$ is $G$-equivariant,
  $$\pi_n(b)(g^{-1}f)=g\pi_n(b)(f)=\pi_n(gb)(f)\quad \forall f\in F=\psi_n(P).$$ 
  Because $gb \in P$,   
  $$\pi_n(gb)(f)=\beta_{P,n}(gb)(f)=g^{-1}\beta_{P,n}(gb)(g^{-1}f).$$
  Thus $\pi_n(b)(h)= g^{-1}\beta_{P,n}(gb)(h)$ for all $h\in g^{-1}F$.  The claim now follows from Lemma
  \ref{lem:qP}.
\end{proof}

 Let $\epsilon>0$ and $F\subset G$ be finite such that $1_G\in F$. It suffices to show that
$$\limsup_{n\to\infty}\| \Proj_{W^F}(\pi_{n*}\nu) - \Proj_{W^F}\theta\| \le 3\epsilon.$$
By Lemma \ref{lem:rohlin} there exists an $N$ such that $n>N$ implies
$$ \nu(\{b\in B:~ F \subset \cQ_{n,b}(1_G)\}) >1-\epsilon.$$
For $P\in \cP_n$, let $\psi'_n(P)=\{g\in \psi_n(P):~g^{-1}\psi_n(P)
\supset F\}$. Let
$$\GOOD(n)= \cup \{ g^{-1}P:~P\in \cP_n, g\in \psi'_n(P)\}.$$
\begin{lem}\label{lem:GOOD}
  $\nu(\GOOD(n)) >1-\epsilon$.   
\end{lem}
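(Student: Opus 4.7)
The plan is to show that $\GOOD(n)$ coincides (up to null sets) with the set $\{b \in B : F \subset \cQ_{n,b}(1_G)\}$, and then invoke the choice of $N$ directly.

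By Lemma~\ref{lem:partition}, for almost every $b \in B$ there is a unique pair $(P,g)$ with $P \in \cP_n$, $g \in \psi_n(P)$, and $b \in g^{-1}P$. I will show that for such $b$ one has the identity
\begin{equation*}
\cQ_{n,b}(1_G) \;=\; g^{-1}\psi_n(P).
\end{equation*}
Once this is established, the condition $F \subset \cQ_{n,b}(1_G)$ becomes $F \subset g^{-1}\psi_n(P)$, which is by definition the condition $g \in \psi'_n(P)$, which in turn is equivalent to $b \in \GOOD(n)$. Thus $\GOOD(n) = \{b : F \subset \cQ_{n,b}(1_G)\}$ up to measure zero, and the assumption $n>N$ gives $\nu(\GOOD(n)) > 1-\epsilon$.

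To verify the identity, I will use the $G$-equivariance of the assignment $b \mapsto \cQ_{n,b}$ from Lemma~\ref{lem:rohlin}, which gives $\cQ_{n,hb} = h \cQ_{n,b}$. Taking parts, $\cQ_{n,hb}(k) = h\,\cQ_{n,b}(h^{-1}k)$ for every $h,k \in G$. Applying this with $h=g$ and $k=1_G$, combined with $b' := gb \in P$ and $g \in \psi_n(P) = \cQ_{n,b'}(1_G)$, yields
\begin{equation*}
\psi_n(P) \;=\; \cQ_{n,gb}(1_G) \;=\; g\,\cQ_{n,b}(g^{-1}),
\end{equation*}
so $\cQ_{n,b}(g^{-1}) = g^{-1}\psi_n(P)$. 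Since $1_G$ and $g$ lie in the same part of $\cQ_{n,gb}$, their images $g^{-1}$ and $1_G$ under $g^{-1}$ lie in the same part of $\cQ_{n,b}$, i.e.\ $\cQ_{n,b}(1_G) = \cQ_{n,b}(g^{-1}) = g^{-1}\psi_n(P)$, as required.

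This is essentially bookkeeping; the only possible obstacle is keeping the equivariance index-gymnastics straight, but no new ideas are needed beyond Lemmas~\ref{lem:rohlin} and~\ref{lem:partition} and the choice of $N$.
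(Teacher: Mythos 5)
Your proof is correct and follows essentially the same route as the paper's: both arguments reduce the claim to showing that $F \subset \cQ_{n,b}(1_G)$ forces $g \in \psi'_n(P)$ for the unique pair $(P,g)$ given by Lemma~\ref{lem:partition}, and both derive this from the equivariance $\cQ_{n,gb}=g\cQ_{n,b}$ together with $g \in \psi_n(P)=\cQ_{n,gb}(1_G)$, which yields $\cQ_{n,b}(1_G)=g^{-1}\psi_n(P)$. The only (harmless) difference is that you record this as an equality of sets, giving $\GOOD(n)=\{b: F\subset\cQ_{n,b}(1_G)\}$ up to null sets, whereas the paper only needs and states the one inclusion.
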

\begin{proof}
By Lemma \ref{lem:rohlin} there exists $N$ such that $n>N$ implies
$$\nu(\{b\in B:~ F \subset \cQ_{n,b}(1_G)\})>1-\epsilon.$$
Suppose $b \in B$ is such that $F \subset \cQ_{n,b}(1_G)$. By Lemma \ref{lem:partition} there exists unique  $P \in \cP_n$ and $g \in \psi_n(P)$ such that $b \in g^{-1}P$. It suffices to show that $g\in \psi'_n(P)$, i.e., $F \subset g^{-1}\psi_n(P)$. 


Because $gb \in P \subset B_n$, $1_G \in R_{n,gb}=gR_{n,b}$. So $g^{-1} \in R_{n,b}$. Let $Q$ be the part of $\cQ_{n,b}$ containing $1_G$. So $F \subset Q$. Now $gQ \ni g \in \psi_n(P)$. So $gQ$ is the part of $g\cQ_{n,b}=\cQ_{n,gb}$ containing $g$. Because $gb \in P$, $\psi_n(P)$ is a part of $\cQ_{n,gb}$. By hypothesis $g\in \psi_n(P)$. So  $gQ=\psi_n(P)$. Since $F \subset Q$, this implies $F \subset g^{-1}\psi_n(P)$ as claimed.

\end{proof}

We now have:
\begin{eqnarray*}
  \|\Proj_{W^F}(\pi_{n*}\nu) - \Proj_{W^F}\theta \| &\le&  \sum_{P\in \cP_n}\sum_{g\in \psi_n(P)} \|\Proj_{W^F}(\pi_{n*}(\nu \res g^{-1}P)) - \Proj_{W^F}(\theta_{g^{-1}P})\|\\
  &\le& 2\epsilon +  \sum_{P\in \cP_n}\sum_{g\in \psi'_n(P)}  \|\Proj_{W^F}(\pi_{n*}(\nu \res g^{-1}P)) - \Proj_{W^F}(\theta_{g^{-1}P})\|\\
  &\le& 2\epsilon +  \sum_{P\in \cP_n}\sum_{g\in \psi'_n(P)} \delta_P. 
\end{eqnarray*}
The first inequality is implied by Lemma \ref{lem:partition}, the second by Lemma \ref{lem:GOOD} and the last by Lemma \ref{clm:total-variation}. We may choose each $\delta_P$ so that $\sum_{P\in \cP_n}\sum_{g\in
  \psi'_n(P)} \delta_P < \epsilon$. Since $\epsilon, \theta, F$ are arbitrary,
this implies Theorem \ref{thm:0-1-dense}.

\section{Freeness of the Poisson boundary action} \label{sec:freeness}


In this section we prove that every torsion-free non-elementary
hyperbolic group acts essentially freely on its Poisson boundary. For standard references on hyperbolic groups see \cite{gromov-1987, coornaert-delzant-papadopolous, notes-on-hyperbolic-groups, ghys-de-la-harpe}. Our main result is a consequence of the following more general result:

\begin{thm}
Let $G$ be a countable group with a generating probability measure $\mu$. Let $G \cc \Pi(G,\mu)$ denote the action of $G$ on its Poisson boundary. Suppose $G$ has only countably many amenable subgroups. Then there exists a normal amenable subgroup $N \vartriangleleft G$ such that the stabilizer of almost every $x \in \Pi(G,\mu)$ is equal to $N$. 
\end{thm}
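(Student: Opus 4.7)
The plan is to analyze the pushforward of the Poisson boundary measure $\beta$ under the stabilizer map $\Stab : \Pi(G,\mu) \to \Sub(G)$, where $\Sub(G) \subset 2^G$ carries the natural Borel structure and $G$-action by conjugation. First, I would recall two basic facts: (i) $\Stab$ is $G$-equivariant and Borel; (ii) because $G \cc \Pi(G,\mu)$ is amenable in Zimmer's sense, a theorem of Zimmer guarantees that $\Stab(x)$ is an amenable subgroup of $G$ for $\beta$-a.e.\ $x$. By hypothesis there are only countably many amenable subgroups of $G$, so $\Stab_*\beta$ is a probability measure supported on a \emph{countable} $G$-invariant subset $S \subset \Sub(G)$.

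Next I would use equivariance to observe that $\Stab_*\beta$ is $\mu$-stationary on $\Sub(G)$, being a $G$-equivariant factor of the stationary action $G \cc (\Pi(G,\mu),\beta)$. The key elementary step is to upgrade stationarity to invariance on a countable space. Let $M = \sup_{H \in S} (\Stab_*\beta)(H) > 0$, attained at some $H_0$. Stationarity gives
\[
  M = (\Stab_*\beta)(H_0) = \sum_{g \in G} \mu(g)\,(\Stab_*\beta)(g^{-1}H_0) \le M,
\]
so equality forces $(\Stab_*\beta)(g^{-1}H_0) = M$ for every $g \in \supp(\mu)$; iterating and using that $\supp(\mu)$ generates $G$ as a semigroup, the entire conjugacy class of $H_0$ carries mass $M$. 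Inducting on the descending sequence of maximum-mass strata, $\Stab_*\beta$ is $G$-invariant.

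Now the decisive observation: $\Stab_*\beta$ is simultaneously \emph{proximal} (as a factor of $\Pi(G,\mu)$) and $G$-invariant. Proximality means that for $\cmu^\Z$-a.e.\ $\omega$, the limit $(\Stab_*\beta)_\omega = \lim_n r(-n,\omega)^{-1}_* (\Stab_*\beta)$ is a Dirac measure. But $G$-invariance gives $r(-n,\omega)^{-1}_*(\Stab_*\beta) = \Stab_*\beta$ for every $n$, so $\Stab_*\beta$ itself is a Dirac measure, say $\delta_N$. Being $G$-invariant under conjugation forces $N$ to be normal in $G$, and being in the support of $\Stab_*\beta$ forces $N$ to be amenable by step (ii). This yields the desired conclusion: $\Stab(x) = N$ for $\beta$-a.e.\ $x \in \Pi(G,\mu)$, with $N \vartriangleleft G$ amenable.

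The main obstacle I anticipate is verifying cleanly that the pushforward $\Stab_*\beta$ is indeed realized as a factor of $\Pi(G,\mu)$ in the sense needed to apply the ``proximal $+$ invariant $=$ Dirac'' principle extracted from Lemma~\ref{lem:proximal}; this requires checking measurability of $\Stab$ into a Polish/standard Borel model of $\Sub(G)$ and confirming that the pushforward lies in $\cP_\mu(\Sub(G))$ so that the paper's machinery applies. Once that is in place, the rest is formal.
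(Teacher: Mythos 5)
Your proposal is correct and follows essentially the same route as the paper: push the Poisson boundary measure forward under the $G$-equivariant stabilizer map into $\Sub_G$, use Zimmer-amenability of the boundary action to conclude that the stabilizers are a.e.\ amenable (hence the pushforward is countably supported), and upgrade stationarity to invariance via the maximal-atom argument. The only difference is the final step: the paper simply cites the fact that every invariant factor of the Poisson boundary is trivial, whereas you re-derive that fact from the proximality characterization (invariant $+$ proximal $\Rightarrow$ Dirac), which is a valid substitute.
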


\begin{proof}
Let $(B,\nu)= \Pi(G,\mu)$ be the Poisson boundary. Let $\Sub_G$ be the
space of subgroups of $G$, equipped with the Fell or Chabauty
topology; in our case of discrete groups this is the same topology as
the subspace topology inherited from the product topology on the space
of subsets of $G$ (see, e.g.,~\cite{abert2012kesten}). Let $\stab
\colon B \to \Sub_G$ be the stabilizer map given by $\stab(b) = \{g
\in G\,:\, gb = b\}$. This map is $G$-equivariant, and so $\stab_*\nu$
is a $\mu$-stationary distribution on $\Sub_G$.

Since the action $G \cc (B,\nu)$ is amenable, $\stab_*\nu$ is
supported on amenable groups (\cite[Theorem 2]{golodets1990amenable},
or, e.g.,~\cite[Theorem A (v)]{adams-elliott-giordano}). However, $G$
contains only countably many amenable subgroups so $\stab_*\nu$ has
countable support. However, every countably supported stationary
measure is invariant; this is because the set of points that have
maximal probability is invariant.

Finally, every invariant factor of the Poisson boundary is trivial
(see, e.g.,~\cite[Corollary 2.20]{bader2006factor}), and so
$\stab_*\nu$ is supported on a single subgroup, which has to be
amenable and normal.
\end{proof}

\begin{proof}[Proof of Proposition~\ref{prop:free-boundary}]
  By the previous theorem, it suffices to prove:
  \begin{enumerate}
  \item $G$ has only countably many amenable subgroups,
  \item $G$ does not contain a nontrivial normal amenable subgroup.
  \end{enumerate}
  It is well-known that hyperbolic groups satisfy a strong form of the
  Tits' Alternative: every subgroup is either virtually cyclic or
  contains a nonabelian free subgroup \cite{gromov-1987}. In
  particular, every amenable subgroup is virtually cyclic and
  therefore must be finitely generated.  Since there are only
  countably many finitely generated subgroups of $G$, this proves (1).

  It is also well-known that any normal amenable subgroup of a
  non-elementary word hyperbolic group must be finite. To see this, recall the Gromov compactification $\bar{G}$ of $G$. Let $\partial G = \bar{G} - G$ and for any subgroup $H\le G$, let $\partial H = \bar{H}-H$  where $\bar{H}$ is the closure of $H$ in $\bar{G}$. It is well-known that if $H$ is virtually infinite cyclic then $\partial H$ consists of two points and $H$ fixes $\partial H$ (for example see \cite[Theorem 12.2 (1)]{kapovich-boundaries}). If $H$ is normal then it follows that $\partial G=\partial H$  \cite[Theorem 12.2 (5)]{kapovich-boundaries}. However, this implies that $G$ is virtually cyclic which contradicts our assumption that $G$ is nonelementary. 
  
  Finally, since we are assuming that $G$ is torsion-free, every
  finite subgroup is trivial.
\end{proof}

On the other hand, it is easy to construct examples of $(G,\mu)$ such
that the action $G \cc \Pi(G,\mu)$ is not essentially free. For
example, this occurs whenever $G$ is nontrivial and abelian since in
this case $\Pi(G,\mu)$ is trivial. For a less trivial example, suppose
$G_1,G_2$ are two countable discrete groups. Let $\mu_i$ be
a generating probability measure on $G_i$. Then $\mu_1\times
\mu_2$ is a generating measure on $G_1\times G_2$. By
\cite[Corollary 3.2]{bader2006factor},
$$G_1\times G_2 \cc \Pi(G_1\times G_2, \mu_1\times\mu_2) \cong G_1\times G_2 \cc \Pi(G_1, \mu_1)\times \Pi(G_2, \mu_2).$$
Therefore if $G_1 \cc \Pi(G_1,\mu_1)$ is not essentially
free then $G_1\times G_2 \cc \Pi(G_1\times G_2,
\mu_1\times\mu_2)$ is also not essentially free. For example, if
$G_1$ is finite or equal to $\Z^d$ for some $d \ge 1$ then this
is always the case.

\section{Nonsingular and stationary transformations and actions}

\subsection{The group of nonsingular transformations} \label{sec:aut}
Let $(X,\nu)$ denote a standard nonatomic Lebesgue probability
space. A measurable transformation $T \colon X \to X$ is {\em
  nonsingular} if $T_*\nu$ is equivalent to $\nu$; equivalently, $\nu$
is $T$-quasi-invariant. Let $\Aut^*(X,\nu)$ be the set of all
nonsingular invertible transformations of $(X,\nu)$ in which we
identify any two transformations that agree almost everywhere. More
precisely: $\Aut^*(X,\nu)$ consists of equivalence classes of
nonsingular transformations in which two such transformations are
considered equivalent if they agree almost everywhere. To each $T \in
\Aut^*(X,\nu)$ and $1 \leq p < \infty$ we assign the isometry $U_{T,p}
\in \Isom(L^p(X,\nu))$ given by
\begin{align*}
  [U_{T,p}f](x) = \left( \frac{d(T_*\nu)}{d\nu}(x)\right)^{1/p}f(T^{-1}x).
\end{align*}
The map $T \mapsto U_{T,p}$ is an algebraic isomorphism of
$\Aut^*(X,\nu)$ with the subgroup of $\Isom(L^p(X,\nu))$ that
preserves the cone of positive functions (see,
e.g.,~\cite[Theorem 3.1]{lamperti1958isometries}).

A topology $\tau$ on $L^p(X,\nu)$ can be lifted to a topology on
$\Isom(L^p(X,\nu))$: a sequence $U_1,U_2,\ldots \in \Isom(L^p(X,\nu))$
converges to $U$ if $U_nf$ converges in $\tau$ to $Uf$ for every $f
\in L^p(X,\nu)$.

In particular, the strong operator topology (SOT) and the weak
operator topology (WOT) on $\Isom(L^p(X,\nu))$ are derived from the
weak and norm (respectively) topologies on $L^p(X,\nu)$ (for $1\le p
<\infty$). In the norm topology, $\lim_n f_n = f$ if
$\lim_n\|f_n-f\|_p=0$. In the weak topology, $\lim_n f_n = f$ if
$\lim_n\langle f_n,g \rangle = \langle f,g \rangle$ for all $g$ in
$L^q(X,\nu)$, where $1/p+1/q=1$.

In~\cite{CK79} it is shown that the topologies induced on
$\Aut^*(X,\nu)$ from the SOT on $\Isom(L^p(X,\nu))$ coincide, for all
$1 \leq p < \infty$. In~\cite[Theorem 2.8]{megrelishvili2001operator}
it is shown that for each $p$ with $1<p<\infty$, the topologies
induced on $\Aut^*(X,\nu)$ from the SOT and the WOT on
$\Isom(L^p(X,\nu))$ coincide, since then $L^p(X,\nu)$ is
reflexive. The topology that these induce on $\Aut^*(X,\nu)$ is called
the {\em weak topology}. In~\cite{CK79} it is shown the weak topology
on $\Aut^*(X,\nu)$ is a Polish group topology, which makes it a
natural choice.

We are also concerned with the topology on $\Aut^*(X,\nu)$ induced
from the WOT on $\Isom(L^1(X,\nu))$. We call this the {\em very weak
  topology}.  This is not a group topology:

\begin{lem}
  The topology on $\Aut^*(X,\nu)$ induced from the WOT on
  $\Isom(L^1(X,\nu))$ is not a group topology. More precisely, the
  multiplication map is not jointly continuous.
\end{lem}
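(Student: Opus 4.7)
My plan is first to unpack the definition of the very weak topology into a concrete form. Using $U_{T,1}f(x) = (dT_*\nu/d\nu)(x)\, f(T^{-1}x)$ and the change of variables $u = T^{-1}x$, I would compute that
\[ \int g \cdot U_{T,1} f \, d\nu = \int (g \circ T)\, f \, d\nu \qquad \text{for all } f \in L^1(X,\nu),\ g \in L^\infty(X,\nu). \]
Hence $T_n \to T$ in the very weak topology is equivalent to the weak-$*$ convergence $g \circ T_n \to g \circ T$ in $L^\infty$ for every $g \in L^\infty$, and, specialising to indicator functions, to the setwise convergence $\nu(T_n^{-1} A \cap B) \to \nu(T^{-1}A \cap B)$ for every pair of Borel sets $A, B \subseteq X$.

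Next I would show non-joint-continuity of multiplication at the pair $(e,e)$, where $e \in \Aut^*(X,\nu)$ denotes the identity. The plan is to produce sequences $S_n, T_n$ with $S_n \to e$ and $T_n \to e$ very weakly, but $S_n T_n \not\to e$. The structural reason this can happen is transparent from the reformulation above: writing
\[ \nu\bigl((S_n T_n)^{-1}A \cap B\bigr) = \nu\bigl(T_n^{-1}(S_n^{-1}A) \cap B\bigr), \]
the set $S_n^{-1}A$ varies with $n$, and although $\nu(T_n^{-1}C \cap B) \to \nu(C \cap B)$ is guaranteed for every fixed $C$, it need not hold for a moving target $C = C_n$. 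This is the familiar obstruction to joint WOT-continuity of composition on a non-reflexive space, realised here inside the subgroup of composition operators coming from $\Aut^*(X,\nu)$.

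To turn this into a proof I would construct such sequences explicitly, working on a concrete model of $(X,\nu)$ such as $\{0,1\}^\mathbb{N}$ with the Bernoulli$(1/2)$ measure. I would design nonsingular maps $S_n, T_n$ whose Radon–Nikodym derivatives and point-maps are supported on cylinders anchored at coordinate $n$, arranged so that on any fixed finite-coordinate cylinder each $S_n$ and $T_n$ eventually acts trivially in the weak sense, but so that the composition $S_n T_n$ transports mass back onto a fixed low-coordinate cylinder with a detectable effect. I would then verify the failure of convergence on a single fixed pair, for example $A = B = \{x_1 = 0\}$, by a direct Bernoulli independence computation.

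The hard part will be the explicit construction. Measure-preserving permutations supported on coordinates $\ge n$ always give compositions that are themselves supported on high coordinates, so they converge as well; to witness the non-continuity one really has to exploit the nonsingular regime, with Radon–Nikodym factors that can be redistributed onto a fixed set upon composition. Once an example is in hand, the very weak convergence of each factor and the non-convergence of the product are both routine: they reduce to asymptotic independence statements for cylinder sets in the Bernoulli shift, verified by elementary computation.
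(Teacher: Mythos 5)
Your reduction of very weak convergence to the matrix--coefficient condition $\nu(T_n^{-1}A\cap B)\to\nu(T^{-1}A\cap B)$ for all measurable $A,B$ is correct, and you have correctly isolated the mechanism by which joint continuity can fail: in $\nu\bigl(T_n^{-1}(S_n^{-1}A)\cap B\bigr)$ the target set moves with $n$. This is exactly the mechanism the paper exploits. However, the lemma \emph{is} the counterexample, and you have not produced one: you list the properties it should have and then defer ``the hard part.'' Moreover, the construction you sketch --- maps whose point-maps and Radon--Nikodym derivatives are ``supported on cylinders anchored at coordinate $n$'' --- cannot work as literally described: if both $S_n$ and $T_n$ modify only coordinates $\ge n$, then so does $S_nT_n$, so for any $A,B$ depending on finitely many coordinates one has $\nu((S_nT_n)^{-1}A\cap B)=\nu(A\cap B)$ for large $n$, and the product also converges to the identity. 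The two factors must play asymmetric roles, which your sketch does not yet provide.

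The paper's construction (on $X=[0,1]$ with Lebesgue measure) shows what is actually needed. The map $T_n$ preserves each interval $[k/n,(k+1)/n]$ --- hence converges very weakly to the identity --- but is genuinely nonsingular: it compresses almost all of each such interval into the tiny subinterval $[k/n,\,k/n+1/n^2]$. The map $S_n$ is measure preserving and \emph{not} local: it translates each tiny interval $[k/n,\,k/n+1/n^2]$ by $1/2$ modulo $1$ and fixes everything else, so it converges to the identity only because its non-fixed set has measure $1/n$. The composition $S_nT_n$ first concentrates essentially all of the mass onto the tiny intervals and then translates it by $1/2$, so $S_nT_n$ converges to the rotation $x\mapsto x+1/2 \bmod 1$ rather than to the identity. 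If you prefer to work on $\{0,1\}^\N$ with Bernoulli measure you can imitate this: let $T_n$ act within each cylinder of length $n$ so as to concentrate the conditional measure onto a sub-cylinder of small measure, and let $S_n$ flip the first coordinate on the union of those sub-cylinders. Until some such construction is written down and the three convergence claims are verified on a fixed pair of sets, the proof is incomplete.
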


\begin{proof}
  Without loss of generality $X = [0,1]$ and $\nu$ is the Lebesgue
  measure. Below we consider $\Aut^*(X,\nu)$ with the WOT induced from
  its embedding into $\Isom(L^1(X,\nu))$.

  For $n\in \N$, let $T_n \in \Aut^*(X,\nu)$ be the piecewise linear
  map such that
  \begin{itemize}
  \item $T_n$ maps the interval $[k/n, k/n+1/n - 1/n^2]$ linearly to
    the interval $[k/n, k/n + 1/n^2]$ for each integer $k$ with $0\le
    k < n$,

  \item $T_n$ maps $[k/n + 1/n - 1/n^2, k/n + 1/n]$ linearly to the
    interval $[k/2 + 1/n^2, k/n + 1/n]$ for each $k$ with $0 \le k <
    n$.
  \end{itemize}
  Because $T_n$ preserves the intervals $[k/n, k/n + 1/n]$ for each
  $k$, $T_n$ converges to the identity. Let $S_n$ be the following
  measure-preserving map:

  \begin{itemize}
  \item $S_n$ maps the interval $[k/n, k/n + 1/n^2]$ to the interval
    $[k/n + 1/2, k/n + 1/n^2 + 1/2]$ where everything is considered
    mod 1. In other words, $S_n$ behaves like a rotation when
    restricted to these intervals.

  \item $S_n$ fixes all other points. That is: $S_n(x) = x$ if $x$ is
    not contained in any interval of the form $[k/n, k/n + 1/n^2]$.
  \end{itemize}

  The fixed point set of $S_n$ has measure $1 - 1/n$. So $S_n$
  converges to the identity. However, the composition $S_nT_n$ does
  not converge to the identity. Instead it converges to the rotation
  $x \mapsto x + 1/2 \mod 1$. This is because $T_n$ pushes most of the
  interval into the little subintervals $[k/n, k/n + 1/n^2]$ which are
  then rotated by $S_n$.

\end{proof}

We observe that both the weak topology and the very weak topology
coincide on $\Aut(X,\nu)$, the subgroup of measure preserving
transformations. We choose to study the weak topology because it is
the natural Polish group topology on $\Aut^*(X,\nu)$. We also study
the very weak topology since it is strongly related (by Theorem
\ref{thm:generic-equivalence}) to the weak* topology on
$\cP_\mu(\Pol^G)$; the weak topology is too fine for this purpose, and
indeed Theorem~\ref{thm:generic-equivalence} is not true for the weak
topology.

\subsubsection{The weak topology}
A
subbase of open sets for the weak topology on $\Aut^*(X,\nu)$ is
\begin{align*}
  W_{A,\epsilon}(T) = \big\{ S\in \Aut^* (X,\nu ) \csuchthat
  \| U_{T,1}\ind_A- U_{S,1}\ind_A  \|_1 <
  \epsilon \big\}
\end{align*}
where $A\subseteq X$ is measurable, $\epsilon >0$ and  $T\in
\Aut^* (X,\nu )$.

If we let $\cS$ be a countable base for the sigma-algebra of
$(X,\nu)$, then $T_n \to T$, in this topology, if
\begin{align*}
  \lim_n \int_X\left\lvert\frac{dT_{n*}\nu}{d\nu}(x)\ind_{S}(T^{-1}_nx) -
  \frac{dT_{*}\nu}{d\nu}(x)\ind_{S}(T^{-1}x)\right\rvert d\nu(x) = 0.
\end{align*}
for every $S \in \cS$. Equivalently, $T_n \to T$ if $dT_{n*}\nu/d\nu$
converges to $dT_{*}\nu/d\nu$ in $L^1(X,\nu)$, and if furthermore
\begin{align}
  \label{eq:very-weak-conv}
  \lim_n \nu(S_1 \cap T_n S_2) = \nu(S_1 \cap T S_2),
\end{align}
for every $S_1,S_2 \in \cS$.

\subsubsection{The very weak topology}\label{sec:vweak}


A subbase of open sets for the very weak topology on $\Aut^*(X,\nu)$  is
\begin{align*}
  W_{A,B,\epsilon}(T) = \big\{ S\in \Aut^* (X,\nu ) \csuchthat
  |\langle U_{S,1}\ind_A,\ind_B \rangle - \langle U_{T,1}\ind_A,\ind_B \rangle | <
  \epsilon \big\}
\end{align*}
where $A,B\subseteq X$ are measurable, $\epsilon >0$,  $T\in
\Aut^* (X,\nu )$ and $\langle f, g \rangle = \int_Xf(x)
\cdot g(x)d\nu(x)$.

In the very weak topology it is enough that~\eqref{eq:very-weak-conv}
holds for every $S_1,S_2 \in \cS$ to guarantee that $T_n \to T$; the
$L^1$-convergence of the Radon-Nikodym derivatives is not needed.

We note that:
\begin{lem}
  $\Aut^*(X,\nu)$ is a Polish space with respect to the very weak
  topology.
\end{lem}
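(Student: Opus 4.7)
The plan is to realize $\Aut^*(X,\nu)$ with the very weak topology as a $G_\delta$ subset of a Polish space; Alexandrov's theorem then provides a compatible complete metric, while separability is inherited from the finer weak topology, which is Polish by \cite{CK79}.

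Let $\MALG = \MALG(X,\nu)$ denote the measure algebra, a Polish space under $d(A,B) = \nu(A\triangle B)$, and fix a countable Boolean subalgebra $\cS = \{A_n\}_{n \in \N}$ of $\MALG$ that generates the full $\sigma$-algebra. I would consider the Polish product $\MALG^\N$ and the map
\[
\Phi \colon \Aut^*(X,\nu) \longrightarrow \MALG^\N, \qquad \Phi(T) = \bigl(T^{-1}A_n\bigr)_{n \in \N}.
\]
Injectivity of $\Phi$ is immediate, since two members of $\Aut^*(X,\nu)$ inducing the same Boolean map on the generating set $\cS$ must coincide. To identify the very weak topology with pointwise convergence in $\MALG^\N$, I would combine the subbasic description of the very weak topology in \S\ref{sec:vweak} with the elementary identity
\[
\|\ind_C - \ind_D\|_1 = \nu(C) + \nu(D) - 2\nu(C\cap D),
\]
which shows that weak $L^1$-convergence of indicator functions coincides with strong $L^1$-convergence, i.e.\ with convergence in $\MALG$. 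Applied with $C = T_k^{-1}A_n$ and $D = T^{-1}A_n$, this yields that $T_k \to T$ in the very weak topology is equivalent to $T_k^{-1}A_n \to T^{-1}A_n$ in $\MALG$ for every $n$, which is exactly pointwise convergence of $\Phi(T_k)$ in $\MALG^\N$. Hence $\Phi$ is a homeomorphism onto its image.

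The main step is then to show that $\Phi(\Aut^*(X,\nu))$ is a $G_\delta$ subset of $\MALG^\N$. A tuple $(B_n)$ lies in the image precisely when the assignment $A_n \mapsto B_n$ extends to a measure-class-preserving $\sigma$-complete Boolean automorphism $\Psi$ of $\MALG$; by Stone duality for measure algebras, such a $\Psi$ determines a unique $T \in \Aut^*(X,\nu)$ with $\Phi(T) = (B_n)$. The existence of $\Psi$ is characterized by countably many conditions on $(B_n)$: algebraic compatibility (Boolean relations among the $A_n$ are transported to the same relations among the $B_n$, closed in $\MALG^\N$); measure-class compatibility ($\nu(B_n) = 0 \iff \nu(A_n) = 0$, closed); and density of the subalgebra generated by $(B_n)$ (open: for each $n, k$, some finite Boolean combination of $B_1, \ldots, B_N$ lies within $d$-distance $1/k$ of $A_n$). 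The intersection of these is $G_\delta$. The main technical obstacle is verifying that these conditions are jointly sufficient — i.e., that an algebraically compatible, measure-class-preserving, dense assignment on a generating subalgebra genuinely extends to a $\sigma$-complete Boolean automorphism of the full measure algebra — and confirming that the density clause is open in the product topology of $\MALG^\N$.
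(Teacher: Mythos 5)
Your route is genuinely different from the paper's, and it has a real gap at exactly the step you flag as the ``main technical obstacle.'' The paper's own proof is a two-line observation: under $T \mapsto U_{T,1}$, $\Aut^*(X,\nu)$ is a \emph{closed} subspace of $\Isom(L^1(X,\nu))$ with the weak operator topology, which is Polish; no $G_\delta$ argument, measure-algebra coding, or appeal to the weak topology is needed. By contrast, the heart of your argument --- that $\Phi(\Aut^*(X,\nu))$ is a $G_\delta$ in $\MALG(\nu)^\N$ --- is not established, and the three conditions you list are not sufficient. A finitely additive Boolean homomorphism $A_n \mapsto B_n$ on a countable generating subalgebra that respects the Boolean relations, sends null generators to null sets, and has dense range need not extend to a $\sigma$-complete automorphism of $\MALG(\nu)$: the missing requirement is countable additivity of the extension, equivalently that the assignment be uniformly continuous for the metric $d(A,B)=\nu(A\triangle B)$ (i.e.\ the finitely additive set function $A_n \mapsto \nu(B_n)$ be uniformly absolutely continuous with respect to $\nu$, which for a genuine $T$ is exactly the absolute continuity $T_*\nu \ll \nu$). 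Pointwise nullity preservation on the generators does not give this. Worse for your strategy, the natural formulation of this extra condition --- for every $k$ there is $m$ such that $\nu(A_n\triangle A_{n'})<1/m$ implies $\nu(B_n\triangle B_{n'})\le 1/k$ --- has the form $\forall\exists\forall$ and is a priori only $F_{\sigma\delta}$ in $\MALG(\nu)^\N$, so even after repairing sufficiency the $G_\delta$ claim would require a separate idea.

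Two smaller points. First, your identification of the very weak topology with pointwise $\MALG$-convergence of $(T^{-1}A_n)_n$ silently passes from the subbase indexed by \emph{all} measurable $A,B$ to the countable family $\cS$; the paper asserts this reduction in \S\ref{sec:vweak}, but justifying it runs into the same lack of uniform control on $dT_{k*}\nu/d\nu$ (the estimate $\nu(T_k^{-1}B\triangle T_k^{-1}A_n)=T_{k*}\nu(B\triangle A_n)$ is not uniformly small in $k$), so you are leaning on an unproved assertion at the one place where nonsingularity, as opposed to measure preservation, actually bites. Second, the measure-class condition $\nu(B_n)>0$ when $\nu(A_n)>0$ is open, not closed, though this is harmless for $G_\delta$-ness. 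I would recommend abandoning the coding into $\MALG(\nu)^\N$ and instead verifying directly that the image of $\Aut^*(X,\nu)$ in $\Isom(L^1(X,\nu))$ is WOT-closed, which is what the paper does.
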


\begin{proof}
  It suffices to observe that $\Aut^*(X,\nu)$ is closed in
  $\Isom(L^1(X,\nu))$ and $\Isom(L^1(X,\nu))$ is a Polish space with
  respect to the weak operator topology.
\end{proof}

\subsection{Spaces of actions}\label{sec:action-space}
Given a countable group $G$, let $A^*(G,X,\nu)=\Hom(G,\Aut^*(X,\nu))$
denote the set of all homomorphisms $a \colon G \to \Aut^*(X,\nu)$. We
consider $A^*(G,X,\nu)$ as a subset of the product space
$\prod_G\Aut^*(X,\nu)$. Given a topology on $\Aut^*(X,\nu)$, we endow
$\prod_G\Aut^*(X,\nu)$ with the product topology and $A^*(G,X,\nu)$
with the subspace topology.

When $A^*(G,X,\nu)$ has the (very) weak topology, then the resulting
topology on $A^*(G,X,\nu)$ will also be called the (very) weak
topology.

\subsubsection{The space of stationary actions}\label{sec:vweak-space}
When $A^*(G,X,\nu)$ has the very weak topology, then $a_n \to a$ if
\begin{align}
  \label{eq:lim-a}
  \lim_n \nu(S_1 \cap a_n(g) S_2) = \nu(S_1 \cap a(g) S_2),
\end{align}
for every $g \in G$ and measurable $S_1,S_2 \in \cS$. In particular
this implies that
\begin{align}
  \label{eq:lim-a-nu}
  \lim_n a_n(g)_*\nu = a(g)_*\nu
\end{align}
in the weak* topology on $\cP(X)$ (regardless of what compatible
topology $X$ is endowed with).

Given a generating measure $\mu$ on $G$, let $A_\mu(G,X,\nu)$ be the
set of all $a\in  A^*(G,X,\nu)$ such that $a$ is {\em $\mu$-stationary}:
\begin{align*}
  \nu = \sum_{g \in G}\mu(g)  a(g)_*\nu.
\end{align*}
The equation above, together with~\eqref{eq:lim-a-nu}, imply that
$A_\mu(G,X,\nu)$ is closed in $\prod_G \Aut^*(X,\nu)$ when the later is
equipped with the product weak topology. Because the product very weak
topology is weaker than the product weak topology, $A_\mu(G,X,\nu)$ is also
closed as a subspace of $A^*(G,X,\nu)$ with respect to the weak
topology. Hence $A_\mu(G,X,\nu)$ is a Polish space, when equipped with
either the weak or the very weak topology.

A subset of $A_\mu(G,X,\nu)$ is $A(G,X,\nu)$, the set of measure
preserving actions, for which $a(g)_*\nu = \nu$ for all $g \in
G$. Note that both the weak and the very weak topology on
$A_\mu(G,X,\nu)$ coincide on $A(G,X,\nu)$, and in particular coincide
with the topology studied by Kechris~\cite{Kechris-global-aspects}, Kerr
and Pichot~\cite{kerr2008asymptotic}, and others.

\section{The weak topology on $A_\mu(G,X,\nu)$}

In this section we consider $A_\mu(G,X,\nu)$ with the weak topology. We
will prove Theorem~\ref{thm:T-erg-meager}
and~\ref{thm:non-T-non-dense}.  To this end, the following proposition
will be helpful.

\begin{prop}[Kaimanovich and Vershik~\cite{kaimanovich1983random}]
  \label{prop:bounded-rn}
  Fix a generating probability measure $\mu$ on $G$.  Then for each $g \in
  G$ there exist constants $M_g,N_g > 0 $ such that for every $a \in
  A_\mu(G,X,\nu)$ and $\nu$-almost-every $x \in X$ 
  \begin{align*}
    -N_g \leq \log \frac{da(g)_*\nu}{d\nu}(x) \leq M_g,
  \end{align*}
  and furthermore
  \begin{align*}
    \sum_g\mu(g)M_g < H(\mu),
  \end{align*}
  where $H(\mu)$ is the Shannon entropy of $\mu$.
\end{prop}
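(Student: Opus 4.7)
The plan is to exploit the stationarity identity $\nu = \sum_h \mu(h)\, a(h)_*\nu$ to bound the Radon-Nikodym derivatives $\rho_g(x) := \frac{da(g)_*\nu}{d\nu}(x)$ pointwise. For $g \in \supp\mu$, stationarity immediately gives $\nu \geq \mu(g)\, a(g)_*\nu$ as measures, hence $\rho_g(x) \leq 1/\mu(g)$ for $\nu$-a.e.\ $x$, uniformly over $a \in A_\mu(G,X,\nu)$. For an arbitrary $g \in G$, since $\mu$ generates $G$ as a semigroup I may write $g = g_1 g_2 \cdots g_k$ with each $g_i \in \supp\mu$; the Radon-Nikodym chain rule then yields
$$\rho_g(x) = \prod_{i=1}^k \rho_{g_i}\bigl(a(g_1 \cdots g_{i-1})^{-1}x\bigr),$$
so $\log \rho_g(x) \leq -\sum_{i=1}^k \log \mu(g_i)$, a bound I will denote provisionally by $M_g$.

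For the lower bound, I would invoke the identity $\rho_g(x)\,\rho_{g^{-1}}(a(g)^{-1}x) = 1$, which follows from $\rho_e \equiv 1$ together with the chain rule applied to $e = g \cdot g^{-1}$. Combining with the upper bound applied to $g^{-1}$, one obtains $\log \rho_g(x) = -\log \rho_{g^{-1}}(a(g)^{-1}x) \geq -M_{g^{-1}}$, so taking $N_g := M_{g^{-1}}$ works and is finite and uniform in $a$ by the semigroup-generation argument.

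The strict inequality $\sum_g \mu(g) M_g < H(\mu)$ is the subtle part: the provisional choice $M_g = -\log \mu(g)$ for $g \in \supp\mu$ gives only equality. The idea is to bootstrap the lower bounds just proved into a slightly improved upper bound. Since $\rho_h(x) \geq e^{-N_h}$ uniformly for each $h \in \supp\mu$, the stationarity equation $\sum_h \mu(h)\rho_h(x) = 1$ forces
$$\mu(g)\rho_g(x) \leq 1 - \sum_{h \in \supp\mu,\ h\neq g} \mu(h)\, e^{-N_h} =: 1 - \epsilon_g,$$
with $\epsilon_g > 0$ whenever $\supp\mu \setminus \{g\}$ is nonempty (the excluded case $\supp\mu = \{g\}$ forces $G$ cyclic, $H(\mu) = 0$, and every stationary measure invariant, so the inequality is vacuous). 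Redefining $M_g := -\log\mu(g) + \log(1-\epsilon_g) > 0$ then yields
$$\sum_g \mu(g) M_g = H(\mu) + \sum_{g \in \supp\mu}\mu(g)\log(1-\epsilon_g) < H(\mu),$$
as required. The main obstacle is precisely this bootstrapping: one must first establish uniform lower bounds $N_h$ from the chain rule step, and then verify that the resulting strictly positive gap $\epsilon_g$ survives the weighted sum against $\mu$, which is what converts the expected $\leq$ into the claimed $<$.
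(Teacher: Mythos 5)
Your argument is correct in substance, and it is worth noting that the paper offers no proof of this proposition at all --- it is quoted from Kaimanovich--Vershik --- so you are supplying an argument the paper omits. Your route is the natural one: from $\mu(g)\,a(g)_*\nu\le\nu$ one gets $\rho_g\le 1/\mu(g)$ for $g\in\supp\mu$; the cocycle identity $\rho_{gh}(x)=\rho_h(a(g)^{-1}x)\rho_g(x)$ propagates this to all of $G$ (using semigroup generation) and, applied to $gg^{-1}=e$, yields the lower bound $N_g=M_{g^{-1}}$; and the identity $\sum_h\mu(h)\rho_h(x)=1$ a.e.\ (the Radon--Nikodym form of stationarity) is exactly what is needed to improve the weighted sum from $\le H(\mu)$ to $<H(\mu)$. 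This last bootstrap is a genuine extra idea, since the na\"ive choice $M_g=-\log\mu(g)$ only gives equality, and you correctly identified it as the crux.

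Three small points to tighten. First, you assert but do not check that the redefined $M_g=-\log\mu(g)+\log(1-\epsilon_g)$ is strictly positive; this does hold, because $1-\epsilon_g>\mu(g)$: since $G$ is infinite, $|\supp\mu|\ge 2$, so every factorization of $h^{-1}$ into elements of $\supp\mu$ uses elements of $\mu$-mass $<1$, whence $N_h>0$ and $e^{-N_h}<1$ for every $h\in\supp\mu$, giving $\sum_{h\ne g}\mu(h)e^{-N_h}<1-\mu(g)$. Second, your dismissal of the case $\supp\mu=\{g\}$ as ``vacuous'' is not quite right --- there the claimed inequality $\sum_g\mu(g)M_g<H(\mu)=0$ with $M_g>0$ would actually be \emph{false} --- but the case simply cannot occur, since a single element cannot generate a countably infinite group as a semigroup, which is the standing hypothesis of the paper. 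Third, when $H(\mu)=\infty$ your final display reads $\infty<\infty$ and proves nothing; this is really a defect of the statement as quoted rather than of your proof (indeed no finite choice of $\sum_g\mu(g)M_g$ can exist when the asymptotic entropy is infinite), and the paper only ever invokes the proposition in situations where $H(\mu)<\infty$.
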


\subsection{Property (T) groups}
In this section we prove Theorem~\ref{thm:T-erg-meager}.  We will need
a few definitions and lemmas.

Let $G \cc (X,\nu)$ be a nonsingular ergodic action on a standard
probability space, and, as above, let $G$ act on $L^2(X,\nu)$ by
$$(gf)(x)=f(g^{-1}x) \sqrt{\frac{dg_*\nu}{d\nu}(x)}$$
for $g\in G, f\in L^2(X,\nu), x\in X$. 
\begin{lem}
  If $\nu$ is not equivalent to a $G$-invariant finite measure, then
  there does not exist a nonzero $G$-invariant vector in $L^2(X,\nu)$.
\end{lem}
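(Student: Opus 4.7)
The plan is to produce, from a hypothetical nonzero $G$-invariant vector $f\in L^2(X,\nu)$, a finite $G$-invariant measure equivalent to $\nu$, contradicting the hypothesis. The natural candidate is $\mu_f := |f|^2\, d\nu$, which is finite because $f\in L^2$.

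First I would verify that $\mu_f$ is $G$-invariant. The Radon--Nikodym chain rule gives
\[
\frac{dg_*\mu_f}{d\nu}(x) \;=\; \frac{dg_*\mu_f}{dg_*\nu}(x)\cdot\frac{dg_*\nu}{d\nu}(x) \;=\; |f(g^{-1}x)|^2\cdot \frac{dg_*\nu}{d\nu}(x).
\]
On the other hand, the $G$-invariance of $f$ under the given representation says
\[
f(x) \;=\; (gf)(x) \;=\; f(g^{-1}x)\sqrt{\frac{dg_*\nu}{d\nu}(x)},
\]
so squaring yields $|f(x)|^2 = |f(g^{-1}x)|^2\,\frac{dg_*\nu}{d\nu}(x)$. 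Combining, $\frac{dg_*\mu_f}{d\nu} = |f|^2 = \frac{d\mu_f}{d\nu}$, hence $g_*\mu_f = \mu_f$ for every $g\in G$.

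Next I would use ergodicity to show $\mu_f \sim \nu$. By construction $\mu_f \ll \nu$, so the only thing to check is that $\nu\ll\mu_f$, i.e.\ that $A := \{x : f(x)\neq 0\}$ has full $\nu$-measure. Since the Radon--Nikodym factor $\sqrt{dg_*\nu/d\nu}$ is strictly positive $\nu$-a.e., the invariance relation $f(x) = f(g^{-1}x)\sqrt{dg_*\nu/d\nu(x)}$ shows that $A$ is $G$-invariant (mod $\nu$-null sets). Ergodicity of $G\cc(X,\nu)$ then forces $\nu(A)\in\{0,1\}$, and $\nu(A)>0$ because $f\ne 0$ in $L^2$; so $\nu(A)=1$.

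Thus $\mu_f$ is a finite $G$-invariant measure equivalent to $\nu$, contradicting the hypothesis. The only step that requires any care is the bookkeeping with Radon--Nikodym derivatives in the first paragraph; nothing deep is going on, but I would want to keep track of which measure each derivative is taken with respect to so as not to drop a factor.
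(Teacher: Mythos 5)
Your proposal is correct and is essentially the paper's own argument: both construct the finite measure $|f|^2\,d\nu$, use the invariance relation $f(x)=f(g^{-1}x)\sqrt{dg_*\nu/d\nu(x)}$ to check $G$-invariance, and invoke ergodicity to see that the support of $f$ has full measure, so the new measure is equivalent to $\nu$. Your Radon--Nikodym bookkeeping is a slightly more explicit rendering of the same change-of-variables computation the paper performs.
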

\begin{proof}
  To obtain a contradiction, suppose $f\in L^2(X,\nu)$ is a nonzero
  $G$-invariant vector. So
  $$f(x)=f(g^{-1}x) \sqrt{\frac{dg_*\nu}{d\nu}(x)}$$
  for a.e. $x\in X$ and every $g\in G$. Because $f$ is nontrivial, the
  support $S=\{x\in X:~f(x)\ne 0\}$ is a $G$-invariant subset which is
  not a null set. Because the action is ergodic, $\nu(S)=1$.

  Define the measure $\eta$ on $X$ by $d\eta(x) = f(x)^2d\nu(x)$. Then
  $$d\eta(g^{-1}x) = f(g^{-1}x)^2~d\nu(g^{-1}x) = f(g^{-1}x)^2 \frac{dg_*\nu}{d\nu}(x) ~d\nu(x) = f(x)^2~d\nu(x) =  d\eta(x).$$
  Thus $\eta$ is a $G$-invariant measure equivalent to $\nu$, and
  moreover has finite mass $\|f\|^2_2$, a contradiction.
\end{proof}
\begin{cor}
  \label{cor:non-invariant-vecs}
  Let $G \cc (X,\nu)$ be an ergodic $\mu$-stationary action on a standard
  probability space. If $\nu$ is not invariant then there does not
  exist a nonzero $G$-invariant vector in $L^2(X,\nu)$.
\end{cor}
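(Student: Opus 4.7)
The strategy is to prove the contrapositive: assuming a nonzero $G$-invariant vector exists in $L^2(X,\nu)$, I will derive that $\nu$ must be $G$-invariant, contradicting the hypothesis of the corollary. By the previous lemma, such a vector forces $\nu$ to be equivalent to some $G$-invariant finite measure; normalize it to a probability measure $\eta$ with $\eta \sim \nu$. Since $\nu$ is ergodic as a $\mu$-stationary action and $\nu\sim\eta$, the measure-preserving action $G\cc (X,\eta)$ is also ergodic, because $G$-invariant measurable sets coincide for the two equivalent measures.

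Write $\psi = d\nu/d\eta \in L^1(X,\eta)$, strictly positive $\eta$-a.e. A direct change of variables using the $G$-invariance of $\eta$ yields $\frac{dg_*\nu}{d\eta}(x) = \psi(g^{-1}x)$, so the $\mu$-stationarity relation $\nu = \sum_g \mu(g)\,g_*\nu$ translates, after dividing by $\eta$, into the harmonic identity
\[
\psi(x) = \sum_{g\in G}\mu(g)\,\psi(g^{-1}x) \qquad \text{for $\eta$-a.e.\ }x.
\]
To upgrade harmonicity to $G$-invariance, pick any bounded strictly concave function $F\colon [0,\infty)\to \R$ (for instance $F(t)=-e^{-t}$). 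Jensen's inequality gives pointwise $F(\psi(x))\ge \sum_g \mu(g)\,F(\psi(g^{-1}x))$, while boundedness of $F(\psi)$ together with $G$-invariance of $\eta$ ensures that both sides have equal $\eta$-integral. Hence Jensen must be an equality $\eta$-a.e., and strict concavity of $F$ forces $g\mapsto \psi(g^{-1}x)$ to be $\mu$-a.s.\ constant for $\eta$-a.e.\ $x$. Plugging this back into the harmonic identity gives $\psi(s^{-1}x) = \psi(x)$ for every $s \in \mathrm{supp}(\mu)$, and since $\mathrm{supp}(\mu)$ generates $G$ as a semigroup (and the exceptional null set absorbs countably many such equalities), $\psi$ is $G$-invariant $\eta$-a.e. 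Ergodicity of $G\cc (X,\eta)$ then forces $\psi$ to be constant, and the normalization $\int \psi\,d\eta = 1$ pins it down to $\psi\equiv 1$. Thus $\nu = \eta$ is $G$-invariant, the desired contradiction.

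The main point to watch is integrability. The most natural concave function to try is $\log$, but $\int \log\psi\,d\eta$ may equal $-\infty$ since $\psi$ can be arbitrarily small; this is precisely why I work with a \emph{bounded} strictly concave $F$, which makes the equality-in-Jensen step a clean finite computation and bypasses all integrability caveats.
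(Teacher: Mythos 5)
Your argument is correct, but it takes a genuinely different route from the paper. The paper disposes of this corollary in two lines: it invokes \cite[Proposition 2.6]{bader2006factor}, which states that a non-invariant $\mu$-stationary action on a standard probability space cannot be equivalent to an invariant one, and combines this with the preceding lemma. You instead prove that proposition from scratch (in the ergodic case, which is all the corollary needs): you pass to the Radon--Nikodym derivative $\psi = d\nu/d\eta$, derive the harmonic identity $\psi(x)=\sum_g \mu(g)\psi(g^{-1}x)$, and force $\psi$ to be constant via the equality case of Jensen's inequality for a \emph{bounded} strictly concave $F$. All the steps check out: the change of variables uses the $G$-invariance of $\eta$ correctly, boundedness of $F$ legitimizes the interchange of sum and integral and sidesteps the non-integrability of $\log\psi$, the equality case of Jensen does yield $\psi(s^{-1}x)=\psi(x)$ for $s\in\supp(\mu)$, and quasi-invariance of $\eta$ lets you propagate these a.e.\ identities to the semigroup generated by $\supp(\mu)$, hence to all of $G$. (In fact, once $\psi$ is $G$-invariant you already have $g_*\nu=\nu$ directly, so the final appeal to ergodicity of $\eta$ is not even needed for the contradiction.) What the paper's approach buys is brevity and a statement valid without ergodicity; what yours buys is a self-contained, fully elementary proof that makes the mechanism (harmonicity of the density plus strict concavity) explicit rather than outsourcing it to Bader--Shalom.
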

\begin{proof}
  It is proven in~\cite[Proposition 2.6]{bader2006factor} that any
  non-invariant $\mu$-stationary action on a standard probability
  space cannot be equivalent to an invariant action. Hence the
  corollary follows from the previous lemma.
\end{proof}

Denote the ergodic actions in $A_\mu(G,X,\nu)$ by $A^e_\mu(G,X,\nu)$.
Let $A'$ be the set of all actions $a \in A_\mu(G,X,\nu)$ such that
there exists a subset $Y \subset X$ with the following properties:
\begin{enumerate}
\item $0 < \nu(Y) < 1$,
\item $a(g)(Y)=Y$ for all $g \in G$,
\item $a$ restricted to $Y$ is $\nu$-measure-preserving.
\end{enumerate}

\begin{lem}
  \label{lem:erg-closure}
  If $G$ has property (T) then the closure of $A^e_\mu(G,X,\nu)$ is
  disjoint from $A'$.
\end{lem}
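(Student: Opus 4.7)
I would aim for a contradiction using property (T), applied to the unitary representation $g \mapsto U_{a_n(g),2}$ of $G$ on $L^2(X,\nu)$. So suppose for contradiction that $a_n \to a$ in the weak topology, with $a_n \in A^e_\mu(G,X,\nu)$ and $a \in A'$ with witnessing set $Y$ satisfying $0 < \nu(Y) < 1$.

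The test vector will be $f := \ind_Y \in L^2(X,\nu)$. The first step is to observe that $f$ is fixed by every $U_{a(g),2}$: since $a(g)$ preserves $Y$ setwise and $a(g)|_Y$ preserves $\nu|_Y$, for $E \subset Y$ we have $\nu(a(g)^{-1}E) = \nu(E)$, so the Radon-Nikodym derivative $\frac{da(g)_*\nu}{d\nu}$ equals $1$ a.e.\ on $Y$; together with $\ind_Y \circ a(g)^{-1} = \ind_Y$, this yields $U_{a(g),2}\ind_Y = \ind_Y$. Because the weak topology on $\Aut^*(X,\nu)$ coincides with strong operator convergence on $\Isom(L^2(X,\nu))$, the convergence $a_n \to a$ implies $\|U_{a_n(g),2}f - f\|_2 \to 0$ for every $g \in G$.

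Next, for each $n$ let $V_n \subset L^2(X,\nu)$ denote the subspace of vectors fixed by $U_{a_n(g),2}$ for all $g \in G$, and orthogonally decompose $f = f_n^{\mathrm{inv}} + f_n^\perp$ with $f_n^{\mathrm{inv}} \in V_n$. I will show $\|f_n^\perp\|_2 \ge c := \sqrt{\nu(Y)(1-\nu(Y))} > 0$, uniformly in $n$. If $a_n$ does not preserve $\nu$, Corollary \ref{cor:non-invariant-vecs} gives $V_n = \{0\}$, hence $\|f_n^\perp\|_2^2 = \nu(Y) \ge c^2$. If $a_n$ preserves $\nu$, then by ergodicity $V_n$ consists of the constants, so $f_n^{\mathrm{inv}}$ is the constant $\nu(Y)$ and $\|f_n^\perp\|_2^2 = \nu(Y)(1-\nu(Y)) = c^2$.

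Finally, I invoke property (T): fix a Kazhdan pair $(Q,\epsilon_0)$ for $G$ with $Q$ finite. Since $V_n^\perp$ is a closed $U_{a_n(\cdot),2}$-invariant subspace on which the representation has no nonzero invariant vector, $f_n^\perp/\|f_n^\perp\|_2$ cannot be $(Q,\epsilon_0)$-invariant, so there exists $g \in Q$ with $\|U_{a_n(g),2}f_n^\perp - f_n^\perp\|_2 \ge \epsilon_0 c$. Since $f_n^{\mathrm{inv}}$ is $U_{a_n(g),2}$-fixed, $U_{a_n(g),2}f - f = U_{a_n(g),2}f_n^\perp - f_n^\perp$, and so $\max_{g \in Q}\|U_{a_n(g),2}f - f\|_2 \ge \epsilon_0 c$ for every $n$. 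But the maximum over the finite set $Q$ of $\|U_{a_n(g),2}f - f\|_2$ tends to $0$ as $n \to \infty$, contradicting $\epsilon_0 c > 0$. The main subtlety is handling the two cases for $V_n$ uniformly, with Corollary \ref{cor:non-invariant-vecs} providing the essential input in the non-measure-preserving case.
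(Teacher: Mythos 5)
Your proof is correct, but it takes a genuinely different route from the paper's. The paper also starts from the invariant vector $\ind_Y$ for the limit action $a$ and the observation that weak convergence gives $\|U_{a_n(g),2}\ind_Y-\ind_Y\|_2\to 0$, but it then packages all the $a_n$ into a single ergodic \emph{stationary joining} $G\cc(X^\N,\eta)$, views the functions $f_n(x)=\ind_Y(x_n)$ as a sequence of almost-invariant vectors in $L^2(X^\N,\eta)$, invokes property (T) qualitatively to produce a nonzero invariant vector there, deduces from Corollary~\ref{cor:non-invariant-vecs} that $\eta$ (hence each $a_n$, hence $a$) is measure-preserving, and finally appeals to the Glasner--Weiss theorem that for (T) groups the ergodic measure-preserving actions are closed, to conclude $a$ is ergodic --- contradicting $0<\nu(Y)<1$. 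You instead work with each representation $U_{a_n(\cdot),2}$ separately: you split $\ind_Y=f_n^{\mathrm{inv}}+f_n^\perp$ against the fixed subspace $V_n$, use Corollary~\ref{cor:non-invariant-vecs} (non-measure-preserving case) and ergodicity (measure-preserving case) to get the uniform lower bound $\|f_n^\perp\|_2\ge\sqrt{\nu(Y)(1-\nu(Y))}$, and then contradict almost-invariance quantitatively via a Kazhdan pair. Your case analysis on $V_n$ substitutes exactly for the paper's combination of ``joining is measure-preserving'' plus Glasner--Weiss, so your argument avoids both the stationary-joining machinery and the closedness of ergodic measure-preserving actions, at the mild cost of invoking the Kazhdan-pair formulation of (T) rather than only the almost-invariant-vectors formulation. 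All the individual steps check out: $U_{a(g),2}\ind_Y=\ind_Y$ because the Radon--Nikodym derivative is $1$ a.e.\ on the invariant set $Y$; the weak topology is indeed the one induced by the strong operator topology on $\Isom(L^2(X,\nu))$; and $V_n^\perp$ is a closed invariant subspace with no nonzero fixed vectors, so the Kazhdan estimate applies to $f_n^\perp\ne 0$.
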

\begin{proof}
  Choose $a \in A'$, and let $Y$ have the properties listed above.  In
  this case, the vector $\ind_Y$ is an invariant vector in
  $L^2(X,\nu)$ for $a$. We will prove the lemma by showing that $a$
  cannot be a limit of ergodic actions.

  Assume the contrary, so that $a_n \to_n a$ in the weak topology,
  with each $a_n \in A^e_\mu(G,X,\nu)$. Let $G \curvearrowright
  (X^\N,\eta)$ be an ergodic stationary joining (see,
  e.g.,~\cite{furstenberg2009stationary}) of $\{a_n \curvearrowright
  (X,\nu)\}_{n \in \N}$; the action on the $n$\textsuperscript{th}
  coordinate is by $a_n$. Since each $a_n$ is ergodic, the projection
  of $\eta$ on the $n$\textsuperscript{th} coordinate is $\nu$. Define
  $f_n \colon X^\N \to \R$ by
  $$f_n(x)=\ind_Y(x_n)$$
  where $x_n\in X$ is the $n$-th coordinate projection of $x$. Then,
  \begin{align*}
    \|gf_n- f_n\| = \|a_n(g)\ind_Y - \ind_Y\| =\|a_n(g)\ind_Y - a(g)\ind_Y\|,
  \end{align*}
  where the first equality follows from the definition of $f_n$, and
  the second follows from the second and third properties of $A'$. Note also that $\|f_n\|=\nu(Y)^{1/2}$. 

  It follows from the definition of the weak topology that
  \begin{align*}
    \lim_n \|gf_n- f_n\| = 0,
  \end{align*}
  and $\{f_n\}_{n\in\N}$ is a sequence of almost-invariant vectors in
  $L^2(X^\N,\eta)$. Since $G$ has property (T), there must be a nonzero
  invariant vector in $L^2(X^\N,\eta)$. By
  Corollary~\ref{cor:non-invariant-vecs}, $G \cc (X^\N,\eta)$ must be
  measure-preserving. This implies each action $a_n$ is
  measure-preserving and therefore the limit action $a$ is also
  measure-preserving. By~\cite[Theorem 1]{glasner1997kazhdan} (see also~\cite[Theorem 12.2]{Kechris-global-aspects}), because $G$ has property (T), the set of ergodic measure-preserving actions is closed in the space of all measure-preserving actions. Since each $a_n$ is ergodic this implies $a$ must also be ergodic, a contradiction. 
    \end{proof}

\begin{proof}[Proof of Theorem~\ref{thm:T-erg-meager}]
  We will prove the theorem by showing that $A'$ is dense in
  $A_\mu(G,X,\nu)$. It will then follow from
  Lemma~\ref{lem:erg-closure} that $A^e_\mu(G,X,\nu)$ is nowhere
  dense.

  Let $a$ be any action in $A_\mu(G,X,\nu)$. Without loss of
  generality, we may assume that $X = [0,1]$ and $\nu$ is the Lebesgue
  measure. Let $T_n \colon \R \to \R$ be the linear map
  $T_n(x)=(1-1/n)\cdot x$. Define $a_n \in A'$ by
  \begin{align*}
    a_n(g)(x) =
    \begin{cases}
      [T_n \circ a(g) \circ T_n^{-1}](x)& \mbox{for } x \in [0,1-1/n]\\
      x& \mbox{for } x \in (1-1/n,1].
    \end{cases}
  \end{align*}
  Note that $a_n$ is still stationary, and indeed in $A'$; the set
  $(1-1/n,1]$ is a non-trivial invariant set. We will show that $a_n
  \to a$ by showing that for every $g \in G$ and measurable $A
  \subseteq [0,1]$ it holds that
  \begin{align*}
    \lim_n \|a_n(g)\ind_A - a(g)\ind_A\| = 0.
  \end{align*}
   Fix $g \in G$, and denote
  \begin{align}
    \label{eq:def-r}
    r(x) = \frac{da(g)_*\nu}{d\nu}(x), \quad   r_n(x) = \frac{da_n(g)_*\nu}{d\nu}(x).
  \end{align}

  For any interval $A \subseteq [0,1]$ it holds that
  \begin{align*}
    \langle a_n(g)\ind_A, a(g)\ind_A \rangle =
    \int_{a_n(g)A \cap a(g)A}\sqrt{r_n(x) \cdot r(x)}d\nu(x).
  \end{align*}
  Now, on $[0,1-1/n]$, by the definition of $a_n$ it holds that
  $r_n(x) = r(T_n^{-1}x)$.  On $(1-1/n,1]$, $r_n(x)=1$ and by
  Proposition~\ref{prop:bounded-rn}, there exists a constant $C =
  C(\mu,g)$ such that $\sqrt{ r(x)} < C$. Hence
  \begin{align*}
     \langle a_n(g)\ind_A, a(g)\ind_A \rangle = e_n +
    \int_{a_n(g)A \cap a(g)A} r(x)d\nu(x) = e_n+[a(g)_*\nu](a_n(g)A \cap a(g)A),
  \end{align*}
  where the error term $e_n$ satisfies 
 \begin{eqnarray*}
 |e_n| &\le& C/n + \int_0^{1-1/n} \left|\sqrt{r(T_n^{-1}x)r(x)} - r(x)\right| ~dx \\
 &\le& C/n + C \int_0^{1/1-n} \left|\sqrt{r(T_n^{-1}x)} - \sqrt{r(x)}\right| ~dx.
 \end{eqnarray*}

We will show that this error terms tends to zero as $n\to\infty$. For this, let $\epsilon>0$ and let $f$ be a continuous function on $[0,1]$ such that $\|f-\sqrt{r}\|_1<\epsilon$. Then
\begin{eqnarray*}
&&\int_0^{1-1/n} \left|\sqrt{r(T_n^{-1}x)} - \sqrt{r(x)}\right| ~dx\\
&\le& \int_0^{1-1/n} \left|\sqrt{r(T_n^{-1}x)} - f(T_n^{-1}x) \right| +\left| f(T_n^{-1}x) - f(x) \right|+  \left|f(x) - \sqrt{r(x)}  \right| ~dx\\
  &\le& \epsilon + \int_0^{1-1/n} \left| f(T_n^{-1}x) - f(x) \right| ~dx + \epsilon.
  \end{eqnarray*}
  The middle term tends to $0$ as $n\to\infty$ by the Bounded Convergence Theorem. Since $\epsilon$ is arbitrary, this implies $\lim_{n\to\infty} |e_n| = 0$.

  Thus
  \begin{align*}
    \lim_n  \langle a_n(g)\ind_A, a(g)\ind_A \rangle =
    \lim_n [a(g)_*\nu](a_n(g)A \cap a(g)A).
  \end{align*}
  Since $a_n(g)$ is measurable and $\nu$-nonsingular, $\nu(a_n(g)A
  \vartriangle a(g)A)$ tends to zero. To be more precise, we observe that 
  \begin{eqnarray*}
  \nu(a_n(g)A  \vartriangle a(g)A) &\le& 1/n + \nu\big(a_n(g)(A \cap [0,1-1/n])  \vartriangle a(g)A\big) \\
  &=& 1/n + \nu\big(T_n a(g)T_n^{-1}(A \cap [0,1-1/n])  \vartriangle a(g)A\big) \\
    &\le& 1/n +\nu\big(a(g)T_n^{-1}(A \cap [0,1-1/n])  \vartriangle T_n^{-1}a(g)A\big).
    \end{eqnarray*}
  So it suffices to show that if $B \subset [0,1-1/n]$ is any measurable set then $\lim_{n\to\infty} \nu( B \vartriangle T_n^{-1} B) = 0$. This follows by approximating the characteristic function $1_B$ by a continuous function. Thus
 \begin{eqnarray*}
      \lim_n  \langle a_n(g)\ind_A, a(g)\ind_A \rangle &=& \lim_n [a(g)_*\nu](a_n(g)A \cap a(g)A) \\
      &=& \lim_n [a(g)_*\nu](a_n(g)A \cup a(g)A) -[a(g)_*\nu](a_n(g)A \vartriangle a(g)A) \\
      &=& \lim_n [a(g)_*\nu](a_n(g)A \cup a(g)A) \ge \lim_n [a(g)_*\nu](a(g)A) = \nu(A).
      \end{eqnarray*}
      Since  $[a(g)_*\nu](a_n(g)A \cap a(g)A) \le [a(g)_*\nu](a(g)A) = \nu(A)$, we must have
         \begin{align*}
    \lim_n  \langle a_n(g)\ind_A, a(g)\ind_A \rangle  = \nu(A),
  \end{align*}
  and it follows immediately that
    \begin{align*}
    \lim_n \|a_n(g)\ind_A - a(g)\ind_A\| = 0.
  \end{align*}
\end{proof}

\subsection{Entropy gaps and non-property (T) groups}
Recall that $A^e_\mu(G,X,\nu)$ is the set of ergodic actions in
$A_\mu(G,X,\nu)$.  Following Theorem~\ref{thm:T-erg-meager}, and in
light of what is known about measure preserving actions, it is natural
to ask if $A^e_\mu(G,X,\nu)$ is residual when $G$ does not have
property (T).  The purpose of this section is to give a negative
answer to this question.

In particular, we will show that there exist groups $G$ without
property (T) and appropriately chosen $\mu$ for which
$A^e_\mu(G,X,\nu)$ is not dense in $A_\mu(G,X,\nu)$, when the latter
is equipped with the weak topology.

Recall that the {\em Furstenberg entropy} of an action $a \in
A_\mu(G,X,\nu)$ is given by
\begin{align*}
  h_\mu(a) = \sum_{g \in
    G}\mu(g)\int_X-\log\left(\frac{d\nu}{da(g)_*\nu}(x)\right)~da(g)_*\nu(x).
\end{align*}

By Nevo~\cite{nevo2003spectral}, if $G$ has property (T) then any
generating measure $\mu$ has an {\em entropy gap}.  Namely, there
exists a constant $\epsilon=\epsilon(\mu)$ such that the Furstenberg
entropy of any non-measure preserving $\mu$-stationary ergodic action
$a$ is at least $\epsilon$.

It is known that some non property (T) groups do not have an entropy
gap (e.g. free groups~\cite{bowen2010random}, some lamplighter groups,
and $SL_2(\Z)$~\cite{hartman2012furstenberg}).  However, we will next
describe groups with an entropy gap which fail to have property (T).

Let $\mu = \mu_1 \times \mu_2$ be a product of generating measures on
a product group $G_1 \times G_2$ (i.e., the support of each $\mu_i$
generates $G_i$ as a semigroup), and let $G \cc (X,\nu)$ be
$\mu$-stationary. Let $(X_i,\nu_i) = G_i \slashslash X$ be the space
of $G_i$-ergodic components of $(X,\nu)$ (i.e., the Mackey realization
of the $G_i$-invariant sigma-algebra), and let $\pi_i \colon X \to
X_i$ be the associated factor map such that $\pi_{i*}\nu =
\nu_i$. Note that the $G$-action on $(X_i,\nu_i)$ factors through
$G_{3-i}$ for $i=1,2$ (that is, it has $G_i$ in its kernel). Then
by~\cite[Proposition~1.10]{bader2006factor}, the map $\pi = \pi_1
\times \pi_2 \colon X \to X_1 \times X_2$ pushes $\nu$ to $\nu_1
\times \nu_2$, and is furthermore relatively measure preserving. It
follows that
\begin{enumerate}
\item $G \cc (X_1 \times X_2, \nu_1 \times \nu_2)$ is an ergodic,
  $\mu$-stationary action, $G_1 \cc (X_2,\nu_2)$ is an ergodic,
  $\mu_1$-stationary action, and likewise $G_2 \cc (X_1,\nu_1)$ is an
  ergodic, $\mu_2$-stationary action.
\item $h_\mu(X,\nu) = h_{\mu_1}(X_2,\nu_2)+h_{\mu_2}(X_1,\nu_1)$.
\end{enumerate}
Given this, we are ready to state and prove the following claim.
\begin{prop}
  \label{prop:non-T-gap}
  There exists a group $G$ that does not have property (T), and a
  generating probability measure $\mu$ on $G$ (which may be taken to
  have finite entropy) such that $\mu$ has an entropy gap.
\end{prop}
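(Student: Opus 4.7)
The plan is to take $G = G_1 \times G_2$, where $G_1$ is an infinite property~(T) group (e.g., $\mathrm{SL}_3(\Z)$) and $G_2$ is an infinite amenable group (e.g., $\Z$), with $\mu = \mu_1 \times \mu_2$ a product of symmetric, finitely supported, generating probability measures. Choosing each $\mu_i$ so that $e \in \supp(\mu_i)$ ensures that $\mu$ generates $G$ as a semigroup. The product $G$ fails property~(T) because $(T)$ passes to the quotient $G_2$, which is infinite and amenable; and $H(\mu) = H(\mu_1) + H(\mu_2) < \infty$.

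The two external inputs I would assemble are: (i) since $G_1$ has~(T), Nevo's entropy gap theorem furnishes $\epsilon = \epsilon(\mu_1) > 0$ such that every non-measure-preserving ergodic $\mu_1$-stationary $G_1$-action has Furstenberg entropy at least $\epsilon$; (ii) since $G_2$ is amenable and $\mu_2$ is symmetric with finite support, the Poisson boundary $\Pi(G_2,\mu_2)$ is trivial (Choquet--Deny in the abelian case, or Kaimanovich--Vershik more generally). Consequence of (ii): every $\mu_2$-stationary action of $G_2$ on a standard probability space is measure-preserving and hence has Furstenberg entropy zero.

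Now let $G \cc (X,\nu)$ be an ergodic $\mu$-stationary action which is not measure-preserving; I will show $h_\mu(X,\nu) \ge \epsilon$. Apply the decomposition recalled just before the proposition, with $(X_i,\nu_i) = G_i \slashslash X$. Item~(1) gives that $G_1 \cc (X_2,\nu_2)$ is ergodic $\mu_1$-stationary and $G_2 \cc (X_1,\nu_1)$ is ergodic $\mu_2$-stationary, and item~(2) gives the additivity
\begin{align*}
  h_\mu(X,\nu) \;=\; h_{\mu_1}(X_2,\nu_2) + h_{\mu_2}(X_1,\nu_1).
\end{align*}
The second summand vanishes by input~(ii). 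Since $\mu$ generates $G$, the action's failure to be measure-preserving implies $h_\mu(X,\nu) > 0$, forcing the first summand to be strictly positive; in particular $G_1 \cc (X_2,\nu_2)$ is not measure-preserving. Input~(i) then yields $h_{\mu_1}(X_2,\nu_2) \ge \epsilon$, and therefore $h_\mu(X,\nu) \ge \epsilon$.

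The argument is mostly bookkeeping on top of the cited decomposition of Bader--Shalom, so I do not expect a serious obstacle. The only points that require a moment's care are verifying that $\mu_1 \times \mu_2$ is semigroup-generating on the product (handled by placing $e$ in each $\supp(\mu_i)$) and invoking the right classical result to kill the Poisson boundary of $(G_2,\mu_2)$; both are standard.
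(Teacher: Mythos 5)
Your proposal is correct and follows essentially the same route as the paper: take $G=\Gamma\times\Z$ with $\Gamma$ having property (T), use the Bader--Shalom product decomposition to get $h_\mu = h_{\mu_1}+h_{\mu_2}$, kill the abelian/amenable factor's entropy via triviality of its Poisson boundary, and invoke Nevo's gap for the (T) factor. Your extra care about semigroup generation of $\mu_1\times\mu_2$ (placing $e$ in each support) and the explicit use of ``$h_\mu>0$ iff non-invariant'' are small refinements of details the paper leaves implicit, not a different argument.
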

\begin{proof}
  Let $G = G_1 \times G_2 = \Gamma \times \Z$, where $\Gamma$ has
  property (T). Let $\mu_1$ be a generating measure on $\Gamma$,
  $\mu_2$ a generating measure on $\Z$, and let $\mu=\mu_1 \times
  \mu_2$. Note that $\mu$ can be taken to have finite entropy.

  Let $a$ be an ergodic, $\mu$-stationary action on $(X,\nu)$, and let
  $(X_1 \times X_2, \nu_1 \times \nu_2)$ be given as above. Denote by
  $a_i$ the induced action of $G_i$ on $(X_{3-i},\nu_{3-i})$. Then
  $h_\mu(a) = h_{\mu_1}(a_1)+h_{\mu_2}(a_2)$.

  Now, since $G_2=\Z$ is abelian, $h_{\mu_2}(a_2)=0$. Since $G_1$ has
  property (T), then $\mu_1$ has an entropy gap, and so, since $G_1
  \cc (X_2,\nu_2)$ is ergodic, $h_{\mu_1}(a_1)$ is either zero or
  larger than some $\epsilon$ that depends only on $\mu$, and the
  proof is complete.
\end{proof}
Note that one can replace $(\Z,\mu_2)$ with any amenable group $G_2$
and a measure $\mu_2'$ such that $(G_2,\mu_2')$ has a trivial Poisson
boundary.

\begin{prop}\label{prop:gap-not-dense}
  If $(G,\mu)$ has an entropy gap then $A^e_\mu(G,X,\nu)$ is not dense
  in $A_\mu(G,X,\nu)$.
\end{prop}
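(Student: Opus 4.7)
The plan is to exploit the fact that the Furstenberg entropy $h_\mu$ is a continuous function on $A_\mu(G,X,\nu)$ in the weak topology. Once continuity is established, the entropy gap carves out a forbidden open interval $(0,\epsilon)$ of entropy values that no ergodic stationary action can attain, so any nonergodic stationary action whose entropy falls into this interval provides an open neighborhood disjoint from $A^e_\mu(G,X,\nu)$.

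To prove continuity of $h_\mu$ in the weak topology, I would invoke the Kaimanovich--Vershik bounds of Proposition~\ref{prop:bounded-rn}: for each $g \in G$ the Radon--Nikodym derivative $f_g^a := da(g)_*\nu/d\nu$ satisfies the uniform bounds $e^{-N_g} \le f_g^a \le e^{M_g}$ (independently of $a \in A_\mu(G,X,\nu)$), with $\sum_g \mu(g) M_g < H(\mu) < \infty$. (One may assume $H(\mu)<\infty$, as in the counterexample of Proposition~\ref{prop:non-T-gap}.) By the definition of the weak topology, $a_n \to a$ implies $L^1$-convergence $f_g^{a_n} \to f_g^a$ for each $g$. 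Since $x \mapsto x \log x$ is continuous and bounded on $[e^{-N_g}, e^{M_g}]$, dominated convergence gives $\int f_g^{a_n} \log f_g^{a_n}\,d\nu \to \int f_g^a \log f_g^a\,d\nu$. A second application of dominated convergence to the sum over $g$, with summable dominating sequence $\mu(g)(M_g + 1/e)$, yields $h_\mu(a_n) \to h_\mu(a)$.

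Next I would construct a nonergodic witness $a_0 \in A_\mu(G,X,\nu)$ with $h_\mu(a_0) \in (0, \epsilon)$. The entropy-gap hypothesis is only of interest when some stationary action has positive entropy, for instance the Poisson boundary $\Pi(G,\mu)$. Realize $(X,\nu)$ as a disjoint union $X_1 \sqcup X_2$ with $\nu(X_2) = t$; on $\bigl(X_1, (1-t)^{-1}\nu\res X_1\bigr)$ place any measure-preserving $\mu$-stationary $G$-action (the trivial action suffices), and on $\bigl(X_2, t^{-1}\nu\res X_2\bigr)$ place a copy of the Poisson boundary action. The combined action $a_0$ on $(X,\nu)$ is $\mu$-stationary by a direct check, and its Radon--Nikodym derivative $da_0(g)_*\nu/d\nu$ restricts on $X_i$ to the corresponding derivative of the piece. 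A short computation gives
$$h_\mu(a_0) = (1-t)\cdot 0 + t \cdot h_\mu(\Pi(G,\mu)) = t \cdot h_\mu(\Pi(G,\mu)).$$
Choosing $t>0$ small enough forces $h_\mu(a_0) \in (0,\epsilon)$, and $a_0$ is nonergodic because $X_1$ is a nontrivial $G$-invariant subset.

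To conclude, by continuity the set $U := h_\mu^{-1}\bigl((0,\epsilon)\bigr)$ is an open neighborhood of $a_0$. Any ergodic $a \in A^e_\mu(G,X,\nu)$ satisfies either $h_\mu(a)=0$ (if measure-preserving) or $h_\mu(a) \ge \epsilon$ (by the entropy-gap hypothesis), so $U \cap A^e_\mu(G,X,\nu) = \emptyset$ and $A^e_\mu(G,X,\nu)$ is not dense. The main obstacle is the continuity of $h_\mu$ in the weak topology; the uniform Kaimanovich--Vershik bounds are essential here since $x\log x$ is unbounded on $(0,\infty)$ and dominated convergence (both for fixed $g$ and in the sum over $g$) would fail without them.
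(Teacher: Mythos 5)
Your proposal is correct and follows essentially the same route as the paper: continuity of the Furstenberg entropy in the weak topology via the uniform Kaimanovich--Vershik bounds on the Radon--Nikodym derivatives (this is the paper's Lemma~\ref{lem:entropy-cont}), a non-ergodic witness built as a weighted disjoint union of a measure-preserving action with the Poisson boundary so that its entropy lands in the forbidden interval $(0,\epsilon)$, and the conclusion that no ergodic action can approximate it. The only cosmetic difference is that you exhibit the explicit open set $h_\mu^{-1}((0,\epsilon))$ while the paper phrases the same fact as a contradiction with density of attainable entropy values; you also correctly flag the implicit finite-entropy and nontrivial-boundary assumptions that the paper likewise invokes in its proof.
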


In order to prove the the proposition, we first observe the following. 
 
\begin{lem}
  \label{lem:entropy-cont}
  The Furstenberg entropy is a continuous map 
  $h \colon A_\mu(G,X,\nu) \to \R$ where $A_\mu(G,X,\nu)$ is equipped with the weak topology.
\end{lem}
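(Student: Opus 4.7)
Writing $r_{g,a}(x) := \frac{da(g)_*\nu}{d\nu}(x)$, the Furstenberg entropy decomposes as
$$h_\mu(a) \;=\; \sum_{g\in G}\mu(g)\, I_g(a), \qquad I_g(a) \;:=\; \int_X r_{g,a}(x)\log r_{g,a}(x)\,d\nu(x).$$
It therefore suffices to prove that each $I_g$ is continuous on $A_\mu(G,X,\nu)$ and then to interchange the limit with the sum over $g$.

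For the pointwise-in-$g$ continuity I plan to use the following. By Proposition~\ref{prop:bounded-rn} there exist constants $N_g,M_g>0$ (depending only on $\mu$ and $g$, and crucially \emph{not} on $a$) such that $e^{-N_g}\le r_{g,a}\le e^{M_g}$ $\nu$-a.e., for every $a\in A_\mu(G,X,\nu)$. The map $\varphi(t)=t\log t$ is Lipschitz on the compact interval $[e^{-N_g},e^{M_g}]$. By the description of the weak topology in \S\ref{sec:aut}, if $a_n\to a$ in the weak topology on $A_\mu(G,X,\nu)$ then $r_{g,a_n}\to r_{g,a}$ in $L^1(X,\nu)$ for every $g\in G$; combined with the uniform $L^\infty$ bound above, the Lipschitz property of $\varphi$ on that interval yields $\varphi\circ r_{g,a_n}\to\varphi\circ r_{g,a}$ in $L^1(X,\nu)$, so $I_g(a_n)\to I_g(a)$.

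To interchange the limit with the sum over $g$, I will apply dominated convergence on the counting measure of $G$. Jensen's inequality applied to the convex function $\varphi$ and to the probability measure $\nu$ gives $I_g(a)\ge \varphi\!\left(\int r_{g,a}\,d\nu\right)=\varphi(1)=0$; the pointwise bound $\log r_{g,a}\le M_g$ together with $\int r_{g,a}\,d\nu=1$ gives $I_g(a)\le M_g$. Thus $0\le \mu(g)I_g(a)\le \mu(g)M_g$ uniformly in $a$, and $\sum_g \mu(g) M_g<\infty$ by Proposition~\ref{prop:bounded-rn}. Dominated convergence then yields the continuity of $a\mapsto h_\mu(a)$ on $A_\mu(G,X,\nu)$.

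The only substantive point, which is handled entirely by the Kaimanovich--Vershik estimate cited as Proposition~\ref{prop:bounded-rn}, is the \emph{uniformity in $a$} of the Radon--Nikodym bounds: without a bound on $r_{g,a}$ that is independent of $a$, neither the Lipschitz argument for a fixed $g$ nor the summable dominator for the sum over $g$ would be available. Once that uniform bound is in hand, the weak-topology definition supplies the $L^1$-convergence of the $r_{g,a}$, and the rest is routine.
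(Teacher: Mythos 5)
Your proof is correct and follows essentially the same route as the paper's: decompose $h_\mu$ as a $\mu$-weighted sum of per-generator terms, use the Kaimanovich--Vershik bounds of Proposition~\ref{prop:bounded-rn} both for the continuity of each term (via the $L^1$-convergence of the Radon--Nikodym derivatives built into the weak topology) and for the summable dominator needed to interchange the limit with the sum over $g$. If anything, your version is slightly cleaner: you work with the correct integrand $r\log r$ against $d\nu$ (the paper's displayed formula $h_\mu^g(a)=\int_X\log r(x)\,d\nu(x)$ drops the density factor) and handle the convergence via the Lipschitz property of $t\mapsto t\log t$ on the uniform interval $[e^{-N_g},e^{M_g}]$.
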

\begin{proof}
  Define
  \begin{align*}
    h_\mu^g(a) = \int_X-\log\left(\frac{d\nu}{da(g)_*\nu}(x)\right)~da(g)_*\nu(x),
  \end{align*}
  so that
  \begin{align*}
    h_\mu(a) = \sum_g\mu(g)h_\mu^g(a).
  \end{align*}
  It follows from Proposition~\ref{prop:bounded-rn} that the maps
  $h_\mu^g \colon A_\mu(G,X,\nu) \to \R^+$ are each bounded by a
  constant $M_g$, such that $\sum_g\mu(g)M_g < \infty$. We will hence
  prove the claim by showing that each of the maps $h_\mu^g$ is
  continuous, and applying the Bounded Convergence Theorem.

  Let $\{a_n\} \subset A_\mu(G,X,\nu)$ converge to $a \in
  A_\mu(G,X,\nu)$. Fix $g \in G$ and define $r(x)$ and $r_n(x)$ as
  in~\eqref{eq:def-r}.  Then
  \begin{align*}
    \lim_n \int_X|r_n(x)-r(x)| ~d\nu(x) = 0.
  \end{align*}
  By Proposition~\ref{prop:bounded-rn},  $-N_g \leq \log r_n(x) \leq
  M_g$, and likewise $-N_g \leq \log r_n(x) \leq
  M_g$. Hence
  \begin{align*}
    \lim_n \int_X|\log r_n(x)- \log r(x)| ~d\nu(x) = 0,
  \end{align*}
  and since
  \begin{align*}
    h_\mu^g(a_n) = \int_X\log r_n(x)~d\nu(x)\quad\mbox{ and }\quad h_\mu^g(a) = \int_X\log r(x)~d\nu(x)
  \end{align*}
  we have shown that $\lim_nh_\mu^g(a_n) = h_\mu^g(a)$.
\end{proof}

\begin{proof}[Proof of Proposition~\ref{prop:gap-not-dense}]
  Let $\mu$ be a generating measure on $G$ with finite Shannon entropy and an entropy gap. Choose $t\in [0,h_\mu(\Pi(G,\mu))]$, where $\Pi(G,\mu)$ is the action on
  the Poisson boundary.  By weighting properly the disjoint union of a
  non-atomic measure preserving action and the action on the Poisson
  boundary, we can construct a non-ergodic $\mu$-stationary action $a$
  with entropy $t$.

  Assume that the ergodic actions are dense. Then there exists a
  sequence of ergodic actions $a_n \to a$, and by
  Lemma~\ref{lem:entropy-cont}, $h_\mu(a_n) \to h_\mu(a)$. This means
  that the entropy values can be realized by ergodic stationary
  actions is dense in $[0,h_\mu(\Pi(G,\mu))]$, which cannot be the case
  when $\mu$ has an entropy gap.
\end{proof}

The combination of Propositions~\ref{prop:non-T-gap}
and~\ref{prop:gap-not-dense} yields our desired result,
Theorem~\ref{thm:non-T-non-dense}. Incidentally, it is now easy to prove Proposition \ref{prop:no-01-law}:

\begin{proof}[Proof of Proposition \ref{prop:no-01-law}]
  Let $t$ be any real number with $0<t< h_\mu(\Pi(G,\mu))$ where, as
  above, $\Pi(G,\mu)$ denotes the Poisson boundary of $(G,\mu)$. Let
  $K \subset A_\mu(G,X,\nu)$ be the set of all actions $a$ with $0\le
  h_\mu(a) \le t$. Then $K$ is $\Aut(X,\nu)$-invariant. Because
  entropy is continuous by Lemma \ref{lem:entropy-cont}, $K$ is
  closed. However, since it does not contain any action
  measurably-conjugate to the Poisson boundary, it is not dense. The
  closure of the complement of $K$ is also not dense since it does not
  contain any measure-preserving actions (since these have entropy
  0). Thus $K$ is neither residual nor meager, and also a dense
  $\Aut(X,\nu)$-orbit cannot exist in $A_\mu(G,X,\nu)$.
\end{proof}

\section{A correspondence principle}\label{sec:correspondence}

In this section, we endow $A_\mu(G,X,\nu)$ with the very weak
topology, and prove Theorem~\ref{thm:generic-equivalence}: a general
correspondence between generic dynamical properties of $\cP_\mu(X^G)$
and those of $A_\mu(G,X,\nu)$. This generalizes a result of Glasner
and King \cite{GK98} from the measure-preserving case to the
stationary case and from the circle to an arbitrary perfect Polish
space.  We begin by studying the topology of the group $\Aut^*(X,\nu)$
of nonsingular transformations of a Lebesgue probability space
$(X,\nu)$ (\S~\ref{sec:aut}) from which we constructed a topology on
$A_\mu(G,X,\nu)$ (\S~\ref{sec:action-space}).


\subsection{Dynamical generic-equivalence}

The group $\Aut(X,\nu)$ acts on $A^*(G,X,\nu)$ by conjugations:
$$(Ta)(g)=Ta(g)T^{-1},\quad T \in \Aut(X,\nu), a \in A^*(G,X,\nu).$$
This action is by homeomorphisms. The orbit of $a$ under this action
is its {\em measure-conjugacy class}. More generally, if $G \cc
(X',\nu')$ and $G \cc (X'',\nu'')$ are nonsingular actions and if
there exists a $G$-equivariant measurable isomorphism $\phi:X' \to
X''$ (ignoring sets of measure zero) such that $\phi_*\nu' = \nu''$ then
we say these two actions are {\em measurably conjugate} and write $G
\cc (X',\nu') \sim G \cc (X'',\nu'')$

Suppose $\Omega$ is a topological space and for each $\omega \in
\Omega$ there is assigned a nonsingular action $G \cc
(X_\omega,\nu_\omega)$. Then $(\Omega,\{ G \cc
(X_\omega,\nu_\omega)\}_{\omega \in \Omega})$ is a {\em
  setting}~\cite{GK98}. For example, $A^*(G,X,\nu)$ is a setting. On
the other hand, suppose $G \curvearrowright Y$ is a jointly continuous
topological action of a countable group on a compact Hausdorff
space. Recall that $\cP^*_G(Y)$ is the set of Borel probability
measures $\eta \in \cP(Y)$ such that the action $G \cc (Y, \eta)$ is
nonsingular. Then $\cP^*_G(Y)$ is a setting: we associate to each
$\eta \in \cP^*_G(Y)$ the system $G \cc (Y,\eta)$.

Similarly, $A_\mu(G,X,\nu)$ and $\cP_\mu(Y)$ are
settings. Following~\cite{GK98} we are interested in comparing the
dynamical properties of settings. To be precise, suppose $(\Omega,\{ G
\cc (X_\omega,\nu_\omega)\}_{\omega \in \Omega})$ is a setting. A subset
$P \subset \Omega$ is a {\em dynamical property} if for every
$\omega_1,\omega_2 \in \Omega$
$$G \cc (X_{\omega_1},\nu_{\omega_1}) \sim G \cc (X_{\omega_2},\nu_{\omega_2}) \Leftrightarrow (\{\omega_1,\omega_2\} \subset P \textrm{ or } \{\omega_1,\omega_2\} \cap P = \emptyset).$$
Observe that if $\Omega'$ is another setting and $P \subset \Omega$ is
dynamical then there exists a corresponding dynamical set $P' \subset
\Omega'$: it is the set of all $\omega' \in \Omega'$ such
that there exists $\omega \in P$ with
$$G \cc (X_{\omega},\nu_{\omega}) \sim G \cc ({X'}_{\omega'},{\nu'}_{\omega'}).$$

A subset $P \subset \Omega$ is {\em Baire} if it can be written as the
symmetric difference of $O$ and $M$, where $O \subset \Omega$ is open
and $M \subset \Omega$ is meager. The Baire sets form a
sigma-algebra which contains the Borel sigma algebra.

Two settings $\Omega_1,\Omega_2$ are {\em dynamically
  generically-equivalent} if for every dynamical property $P_1 \subset
\Omega_1$ if $P_2 \subset \Omega_2$ is the corresponding property then
$P_1$ is Baire/residual/meager in $\Omega_1$ iff $P_2$ is
Baire/residual/meager in $\Omega_2$.


The following is a formal rephrasing of
Theorem~\ref{thm:generic-equivalence}. It is the main result of this section.
\begin{thm}
  \label{thm:dyn-gen-equivalent}
  Let $(G,\mu)$ be a countable group with a generating probability
  measure, $Z$ be a perfect Polish space and let $G \cc Z^G$ denote
  the shift action. Let $(X,\nu)$ be standard, non-atomic probability
  space. Then $A_\mu(G,X,\nu)$ and $\cP_\mu(Z^G)$ are dynamically
  generically-equivalent.
\end{thm}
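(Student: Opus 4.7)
The plan is to follow the Glasner--King framework by constructing an explicit Borel correspondence between $A_\mu(G,X,\nu)$ and $\cP_\mu(Z^G)$ that preserves measure-conjugacy classes, then transferring Baire category in both directions. Since $Z$ is a perfect Polish space, it admits a Borel isomorphism with $[0,1]$ modulo nulls; fix a Borel isomorphism $\phi:X\to Z'\subset Z$ with $\phi_*\nu$ nonatomic. Define the coding map
\[
\Pi:A_\mu(G,X,\nu)\to\cP_\mu(Z^G),\qquad \Pi(a)=\Psi_{a,*}\nu,\qquad \Psi_a(x)(g)=\phi\bigl(a(g^{-1})x\bigr).
\]
The map $\Psi_a$ is $G$-equivariant (for the shift on $Z^G$) and, since $\phi$ is essentially bijective, realizes a measure-conjugacy between $(X,\nu,a)$ and $(Z^G,\Pi(a),\sigma)$. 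Hence $\Pi$ descends to a bijection between measure-conjugacy classes, and any dynamical property $P\subset A_\mu(G,X,\nu)$ satisfies $\Pi^{-1}(P')=P$ where $P'$ is the corresponding dynamical property.

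The central technical step is to show that $\Pi$ is continuous from the very weak topology to the weak* topology. Weak* convergence $\Pi(a_n)\to\Pi(a)$ reduces to convergence of cylinder probabilities $\Pi(a)(\prod_{g\in F}A_g)=\nu\bigl(\bigcap_{g\in F}a(g)\phi^{-1}(A_g)\bigr)$. For $|F|=1$ this is immediate from the description of the very weak topology in \S\ref{sec:vweak}. For $|F|\ge 2$, a change of variables translates one of the factors to the identity and introduces a Radon--Nikodym derivative; Proposition~\ref{prop:bounded-rn} bounds this derivative uniformly in $L^\infty$, and a mollification of indicators by continuous coordinate functions on $Z^F$, combined with the WOT convergence of the isometries $U_{a_n(g),1}$, yields the joint convergence.

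For the reverse direction, observe that every nonatomic $\theta\in\cP_\mu(Z^G)$ can be realized, via a Borel-selected measure-space isomorphism $\psi_\theta:(Z^G,\theta)\cong(X,\nu)$, as a stationary action $\Sigma(\theta):=\psi_\theta\circ\sigma\circ\psi_\theta^{-1}\in A_\mu(G,X,\nu)$. The nonatomic measures form a residual subset, since atomic stationary measures cannot be essentially free and essential freeness is a residual $G_\delta$ property by Theorem~\ref{thm:G-deltas}. The resulting Borel map $\Sigma$ is a right-inverse of $\Pi$ up to measure-conjugacy. Combined with Theorem~\ref{thm:weak-rohlin} (a dense $\Aut(X,\nu)$-orbit exists in $A_\mu(G,X,\nu)$ for the very weak topology) and the density results of \S\ref{sec:density}, continuity of $\Pi$ and Borel measurability of $\Sigma$ yield the Baire-category equivalence: $P$ is residual/meager/Baire iff $P'$ is.

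The main obstacle is the continuity of $\Pi$ for cylinders of size $\ge 2$. Very weak convergence is only WOT convergence of the isometries on $L^1$, and the product of two weak*-convergent $L^\infty$-sequences need not be weak*-convergent (e.g., $\cos(2\pi nx)\cdot \cos(2\pi nx)\to 1/2\ne 0$). The resolution relies critically on stationarity, which via Proposition~\ref{prop:bounded-rn} confines the Radon--Nikodym derivatives to a uniformly bounded, weakly precompact family in $L^1$, together with the specific algebraic structure of the cocycle-twisted substitutions that appear in the change of variables. This is also precisely where the very weak topology is the correct choice: the weak topology would be too fine (pushforward measures would not be weak*-dense), while a coarser topology would break the conjugacy-preserving properties of $\Pi$.
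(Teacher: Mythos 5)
Your coding map $\Pi$ is essentially the paper's map $F\colon a\mapsto \pi_{a*}\lambda$ from \S~\ref{sec:E}, but the proposal omits the device that actually makes the category transfer work, and the transfer argument you give in its place is not valid. The image of $\Pi$ is contained in the set of measures on $Z^G$ whose identity-coordinate marginal equals the fixed measure $\phi_*\nu$; this is a closed, nowhere dense subset of $\cP_\mu(Z^G)$ (the marginal can always be perturbed). So a residual dynamical property $P\subset A_\mu(G,X,\nu)$ is carried by $\Pi$ into a nowhere dense set, and continuity of $\Pi$ alone says nothing about the corresponding property $P'\subset\cP_\mu(Z^G)$. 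Your proposed fix --- a Borel right-inverse $\Sigma$ defined on the nonatomic measures --- does not repair this, because Borel maps do not transfer Baire category in either direction: preimages of meager sets under Borel maps need not be meager, and images of residual sets need not be residual. The paper's resolution is the Glasner--King group $\H$ of order-preserving homeomorphisms acting on marginals: $D\colon \H\times\cP^\lambda_\mu(\bB^G)\to\bcP_\mu(\bB^G)$, $(h,\nu)\mapsto h_*\nu$, is a conjugacy-class-preserving homeomorphism (Lemma~\ref{lem:D}); the image of $F$ is a dense $G_\delta$ in $\cP^\lambda_\mu(\bB^G)$ (Lemmas~\ref{lem:F} and~\ref{lem:F-res}); and $\bcP_\mu(\bB^G)$, identified with the measures carried on a dense $G_\delta$ copy of Baire space inside $Z^G$, is residual in $\cP_\mu(Z^G)$ (Proposition~\ref{claim:residual}). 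Only with all three pieces in place does the saturation $P'$ inherit the category of $P$, via $E=D\circ(\mathrm{id}\times F)$ and Oxtoby's product lemma. None of these components appears in your proposal.

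Two smaller points. First, taking $\phi$ to be merely a Borel isomorphism endangers even the continuity of $\Pi$: weak* convergence in $\cP(Z^G)$ is tested against open (or continuity) sets of $Z$, whose $\phi$-preimages in $X$ are uncontrolled. The paper avoids this by working with the Baire space $\bB$, which has a clopen base, so every basic cylinder is a continuity set for every measure. Second, your concern about cylinders of size at least $2$ is legitimate, but the resolution you sketch (mollification plus Proposition~\ref{prop:bounded-rn} plus the ``algebraic structure of the cocycle-twisted substitutions'') is not an argument. The actual resolution is that $\Pi(a)$ is a graph measure, hence determined by --- and its weak* convergence controlled by --- its two-dimensional marginals $\lambda(S_1\cap a(g)S_2)$, which converge by the definition of the very weak topology; this is exactly how Lemma~\ref{lem:F} proceeds.
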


\begin{remark}
  One can replace $\cP_\mu(Z^G)$ with $\cP_G(Z^G)$ and
  $A_\mu(G,X,\nu)$ with $A(G,X,\nu)$ in the proof of
  Theorem~\ref{thm:generic-equivalence}. So the proof shows that
  $A(G,X,\nu)$ and $\cP_G(Z^G)$ are also dynamically
  generically-equivalent. This extends the Glasner-King
  Theorem~\cite{GK98} which shows that $A(G,X,\mu)$ and $\cP_G(\T^G)$
  are dynamically generically-equivalent where $\T=S^1$ represents the
  1-dimensional torus. Our proof is based on~\cite{GK98}. We need a
  few additional arguments to generalize from $\T$ to $Z$ and we fill
  a few gaps in the somewhat terse presentation in~\cite{GK98}.
\end{remark}

\label{sec:dyn-gen-equi}


\subsection{Preliminaries and outline of the proof of
  Theorem~\ref{thm:dyn-gen-equivalent}}

Let 
\begin{itemize}
\item $\bB=\N^\N$ denote the Baire space,
\item $\bcP(\bB)\subset \cP(\bB)$ denote the subspace of
  fully-supported purely non-atomic Borel probability measures on
  $\bB$,
\item $\bcP_\mu(\bB^G) \subset \cP_\mu(\bB^G)$ the subspace of
  $\mu$-stationary probability measures whose projections on each
  coordinate are in $\bcP(\bB)$. Note that it is enough to require
  that the projection to the identity coordinate is in $\bcP(\bB)$.
\end{itemize}

The first step in the proof of Theorem \ref{thm:generic-equivalence}
is to prove:
\begin{prop}
  \label{claim:residual}
  $\bcP_\mu(\bB^G)$ and $\cP_\mu(\Pol^G)$ are dynamically generically
  equivalent.
\end{prop}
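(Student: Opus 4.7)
The plan is to factor the claim through the intermediate setting $\cP_\mu(\bB^G)$: first show that $\bcP_\mu(\bB^G)$ is a dense $G_\delta$ subset of $\cP_\mu(\bB^G)$, and then exhibit a dynamical generic equivalence between $\cP_\mu(\bB^G)$ and $\cP_\mu(\Pol^G)$ coming from a continuous $G$-equivariant map between the product spaces. Since dense $G_\delta$ subspaces of a Polish space inherit the ambient Baire/meager/residual structure, the first step reduces the proposition to comparing $\cP_\mu(\bB^G)$ with $\cP_\mu(\Pol^G)$.

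For density of $\bcP_\mu(\bB^G)$ in $\cP_\mu(\bB^G)$, fix $\beta \in \bcP(\bB)$ and use the self-similar structure $\bB \cong \N^k \times \bB$ to construct, for each $k$, a measure $\nu_k$ obtained by replacing the tail of every coordinate of a $\nu$-sample beyond depth $k$ with an independent $\beta$-sample; this operation is equivariant under the $G$-shift, preserves stationarity, lands in $\bcP_\mu(\bB^G)$, and yields weak-$*$ convergence $\nu_k \to \nu$. The $G_\delta$ property follows because non-atomicity and full support of the identity-coordinate marginal are each $G_\delta$ conditions on $\cP(\bB)$, and stationarity together with the generating property of $\mu$ propagates these conditions to every coordinate (non-atomicity of a convex combination forces non-atomicity of each summand). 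For the main comparison, choose a continuous surjection $\phi \colon \bB \to \Pol$, which exists since $\Pol$ is a non-empty Polish space, and extend coordinate-wise to a continuous $G$-equivariant surjection $\Phi \colon \bB^G \to \Pol^G$. The push-forward $\Phi_* \colon \cP_\mu(\bB^G) \to \cP_\mu(\Pol^G)$ is continuous and, via a $G$-equivariant Borel section of $\Phi$ coming from a Borel section of $\phi$, surjective. One then shows that for $\nu$ in a residual subset of $\cP_\mu(\bB^G)$, the map $\Phi$ is $\nu$-a.s.\ injective and hence a measure-conjugacy between $G \cc (\bB^G,\nu)$ and $G \cc (\Pol^G,\Phi_*\nu)$; this uses that the collision set $\{(a,b) \in \bB^2 : \phi(a)=\phi(b),\ a\ne b\}$ can be arranged to be meager with null projections, and that non-atomic fully-supported marginals ensure the $G$-orbit of a $\nu$-typical point avoids this set almost surely.

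The main obstacle is aligning the topological correspondence induced by $\Phi_*$ with the measure-conjugacy correspondence: the former gives a bijection of Baire/meager/residual structures only when $\Phi_*$ is sufficiently well behaved (for instance, $\phi$ open lifts coordinate-wise and pushes forward to an open continuous surjection at the level of simplices), while the latter requires $\nu$-a.s.\ injectivity of $\Phi$ on a residual set. Reconciling these requires a careful choice of $\phi$ — simultaneously open and generically injective — together with a delicate analysis of the push-forward in the weak-$*$ topology. An alternative following Glasner-King more closely bypasses openness by constructing a dense $\Aut(X,\nu)$-orbit directly in each setting and transporting dynamical properties through the orbit structure; this is the route we expect to adopt if openness of $\Phi_*$ proves too restrictive.
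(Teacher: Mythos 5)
The first half of your plan (showing $\bcP_\mu(\bB^G)$ is a dense $G_\delta$ in $\cP_\mu(\bB^G)$) is sound in outline, though your tail-replacement measures need a correction: if the identity marginal of $\nu$ gives no mass to some depth-$k$ cylinder, replacing tails by independent $\beta$-samples does not produce a fully supported marginal, so you must also mix in a small multiple of $\beta^G$. But this is the easy half, and it is not the reduction the statement actually needs; the substance lies in comparing the Baire-space side with $\cP_\mu(\Pol^G)$, and that is where your argument has a genuine gap.

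Your main step passes through a continuous surjection $\phi\colon \bB\to\Pol$ and its coordinatewise extension $\Phi$. This cannot deliver dynamical generic equivalence. To transport the Baire/residual/meager trichotomy between the two settings you need a conjugacy-class-preserving homeomorphism between residual subsets of the two simplices; a continuous surjection $\Phi_*$ gives none of this, and neither the $\nu$-a.s.\ injectivity of $\Phi$ on a residual set of $\nu$, nor the continuity of the inverse of $\Phi_*$, nor the residuality of its image in $\cP_\mu(\Pol^G)$ is established --- nor is any of it plausible without further input (when $\Pol$ is compact, e.g.\ the Cantor set, $\phi$ is necessarily badly non-injective, and your ``collision set'' argument is only a hope, which you yourself hedge by deferring to an unspecified alternative). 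The missing idea is to replace the surjection by a homeomorphism onto a \emph{dense $G_\delta$ subset}: every perfect Polish space $\Pol$ contains a dense $G_\delta$ subset $\hat\Pol$ homeomorphic to $\bB$, so a homeomorphism $\beta\colon\bB\to\hat\Pol$ extends coordinatewise to a $G$-equivariant homeomorphism $\bB^G\to\hat\Pol^G$ and hence to a conjugacy-preserving homeomorphism between $\bcP_\mu(\bB^G)$ and $\bcP_\mu(\hat\Pol^G)$. The real work is then to show that $\bcP_\mu(\hat\Pol^G)$ is residual in $\cP_\mu(\Pol^G)$: density is obtained by smearing each coordinate of a given $\nu$ with a fixed fully supported non-atomic $\lambda\in\cP(\hat\Pol)$ conditioned on balls of radius $1/n$ and mixing in $\frac1n\lambda^G$, while the $G_\delta$ property follows from the portmanteau theorem applied both to the marginal conditions (full support and non-atomicity at the identity coordinate) and to the closed sets exhausting $\Pol^G\setminus\hat\Pol^G$. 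None of this appears in your proposal, and without it the proposition is not proved.
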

This result is essentially due to the fact that any perfect Polish
space contains a dense $G_\delta$-subset homeomorphic to $\bB$. It is
proven in \S~\ref{sec:Baire}.

Because of Proposition~\ref{claim:residual}, it suffices to prove that
$\bcP_\mu(\bB^G)$ and $A_\mu(G,X,\nu)$ are dynamically
generically-equivalent. Without loss of generality, we may assume that
$(X,\nu) = (\bB,\lambda)$, where $\lambda$ is any fully-supported
purely non-atomic Borel probability measure on $\bB$.

Our proof proceeds as follows. We construct a Polish space $\H$ and
consider the setting $\H \times A_\mu(G,\bB,\lambda)$ (that depends on
the second coordinate only). We show
(Proposition~\ref{prop:E-properties}) that there exists a map $E
\colon \H \times A_\mu(G,\bB,\lambda) \to \bcP_\mu(\bB^G)$ satisfying:
\begin{enumerate}
\item for any $a \in A_\mu(G,\bB,\lambda)$ and $h\in \H$, the action
  $G \cc (\bB^G,E(h,a))$ is measurably conjugate to $a$.
\item The image of $E$ is residual in $\bcP_\mu(\bB^G)$.
\item $E$ is a homeomorphism onto its image.
\end{enumerate}

Given this, the proof of Theorem~\ref{thm:dyn-gen-equivalent} is
straight-forward.
\begin{proof}[Proof of Theorem~\ref{thm:dyn-gen-equivalent}]
  Let $P_1 \subseteq A_\mu(G,\bB,\lambda)$ be a dynamical property,
  and let $P_2 \subseteq \bcP_\mu(\bB^G)$ and $P_3 \subseteq
  \cP_\mu(\Pol^G)$ be the corresponding properties. We would like to
  show that $P_1$ is $(*)$ iff $P_3$ is $(*)$, where $(*)$ stands for
  either Baire, residual or meager. By Proposition~\ref{claim:residual},
  $P_2$ is $(*)$ iff $P_3$ is $(*)$.

  By the first property of $E$, $P_2 = E(P_1 \times \H) \cup M$, for
  some $M$ in the complement of the image of $E$. By the second
  property of $E$, $M$ is meager. Hence, by the third property of $E$,
  $P_2$ is $(*)$ iff $\H \times P_1$ is $(*)$. Finally, by~\cite[Page
  57]{Ox71}, $P_1 \times \H$ is $(*)$ iff $P_1$ is $(*)$, and so the
  claim follows.
\end{proof}



\subsection{Reduction to Baire space}\label{sec:Baire}
The purpose of this subsection is to prove Proposition \ref{claim:residual}.

\subsubsection{The non-atomic, fully supported measures are a residual subset}

A well known fact is that every perfect Polish space has a dense
$G_\delta$ subset that is homeomorphic to the Baire space $\bB =
\N^\N$ (see, e.g., the proof of Proposition 2.1
in~\cite{burke2003models}). Denote by $\hat \Pol$ such a subset of
$\Pol$. Recall that $\bcP_\mu(\hat\Pol^G)$ denotes the set of
$\mu$-stationary measures on $\hat\Pol^G$ (a dense $G_\delta$ subset
of $\Pol^G$ that is homeomorphic to $\bB^G$), which furthermore have
marginal on the identity coordinate that is fully supported and
non-atomic.

To prove Proposition~\ref{claim:residual}, we need the following
lemma.  Before stating it, we note that by the portmanteau Theorem, if
$Y$ is a subset of $X$ then the space of all probability measures on
$X$ that are supported on $Y$ is homeomorphic with $\cP(Y)$. In
particular, we think of $\bcP_\mu(\hat\Pol^G)$ as a subset of
$\cP_\mu(\Pol^G)$.
\begin{lem}
  \label{clm:nu-k}
  $\bcP_\mu(\hat\Pol^G)$ is dense in $\nu \in \cP_\mu(\Pol^G)$. 
\end{lem}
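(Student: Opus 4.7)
The plan is to approximate any $\nu \in \cP_\mu(\Pol^G)$ by a ``blur-and-mix'' construction. First I would fix a fully-supported, purely non-atomic Borel probability measure $\eta$ on $\hat\Pol$ (which exists since $\hat\Pol \cong \bB$), together with a compatible metric on $\Pol$. For each $k \ge 1$ I would choose a countable Borel partition $\{U_{k,i}\}_{i \in \N}$ of $\Pol$ of mesh less than $1/k$, a point $x_{k,i} \in U_{k,i}$, and set $V_{k,i} := B(x_{k,i},1/k) \cap \hat\Pol$. Because $\hat\Pol$ is dense in $\Pol$ and $\eta$ is fully supported, $\eta(V_{k,i}) > 0$, so the normalized restriction $\eta_{k,i} := \eta|_{V_{k,i}}/\eta(V_{k,i})$ is a non-atomic probability measure on $\hat\Pol$. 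Writing $i_k(x)$ for the index with $x \in U_{k,i_k(x)}$, I would define
$$\nu_k := \int_{\Pol^G} F_k(x) \, d\nu(x), \qquad F_k(x) := \bigotimes_{g \in G} \eta_{k,i_k(x_g)},$$
a Borel probability measure supported on $\hat\Pol^G$.

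Next I would verify that $\nu_k$ is $\mu$-stationary with non-atomic identity marginal. Stationarity follows from the equivariance identity $h_* F_k(x) = F_k(hx)$ (using $(hx)_g = x_{h^{-1}g}$), which yields $h_*\nu_k = \int F_k(y)\,d(h_*\nu)(y)$; summing against $\mu$ and using stationarity of $\nu$ gives $\sum_h \mu(h) h_*\nu_k = \nu_k$. The identity marginal equals $\sum_i \nu_e(U_{k,i})\eta_{k,i}$, where $\nu_e$ is the identity marginal of $\nu$; this is a countable convex combination of non-atomic measures and hence non-atomic. It may however fail to be fully supported on $\hat\Pol$, so I would mix with a Bernoulli reference and set $\nu_k' := (1-1/k)\nu_k + (1/k)\eta^G$. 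Since $\eta^G$ is $G$-invariant, supported on $\hat\Pol^G$, and has fully-supported non-atomic identity marginal $\eta$, the mixture $\nu_k'$ lies in $\bcP_\mu(\hat\Pol^G)$, viewed as a subset of $\cP_\mu(\Pol^G)$.

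Finally, I would establish that $\nu_k' \to \nu$ in the weak$^*$ topology. It suffices to test against cylindrical functions $f(x) = \tilde f(x_{g_1},\ldots,x_{g_n})$ with $\tilde f \in C_b(\Pol^n)$. Fubini gives $\int f\, d\nu_k = \int \tilde f_k(x_{g_1},\ldots,x_{g_n})\,d\nu(x)$, where $\tilde f_k(z_1,\ldots,z_n) := \int \tilde f\, d\bigl(\bigotimes_j \eta_{k,i_k(z_j)}\bigr)$. Since each $\eta_{k,i_k(z_j)}$ is supported within distance $2/k$ of $z_j$, continuity of $\tilde f$ forces $\tilde f_k \to \tilde f$ pointwise; boundedness together with the dominated convergence theorem then yields $\int f\, d\nu_k \to \int f\, d\nu$, while the $1/k$ mixing with $\eta^G$ contributes only an $O(1/k)$ error. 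The main obstacle is the combination of constraints: the approximants must be simultaneously $\mu$-stationary, supported inside $\hat\Pol^G$, and have an identity marginal that is both non-atomic and fully supported on $\hat\Pol$. Neither the blurring alone (which may fail full support) nor the Bernoulli mix alone (which cannot push $\nu$ off $\Pol^G \setminus \hat\Pol^G$) suffices, so the two must be combined.
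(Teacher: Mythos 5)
Your proposal is correct and follows essentially the same route as the paper's proof: blur each coordinate of $\nu$ by a localized, normalized piece of a fixed non-atomic fully supported reference measure on $\hat\Pol$, use the equivariance of the blurring kernel to preserve stationarity, mix with the invariant product measure $\eta^G$ to restore full support of the identity marginal, and let the blur radius shrink to get weak* convergence. The only (immaterial) difference is that you discretize $\Pol$ by a mesh-$1/k$ partition and condition on balls around representative points, whereas the paper conditions the reference measure directly on the ball $B(x,1/n)$ around each point $x$ itself.
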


\begin{proof}
  Given a measure $\nu \in \cP_\mu(\Pol^G)$, we construct a sequence
  $\{\nu_n\} \subset \bcP_\mu(\hat\Pol^G)$ such that $\lim_n \nu_n =
  \nu$. 
  
  Fix a non-atomic, fully supported $\lambda \in \cP(\hat\Pol)$.  Fix
  a compatible metric on $\Pol$, and denote by $B(x,r)$ the ball of
  radius $r$ around $x \in \Pol$.  For $x \in \Pol$ and $n \in \N$,
  let $\lambda_{x,n} \in \cP(\Pol)$ be equal to $\lambda$, conditioned
  on $B(x,1/n)$. That is, for any measurable $A \subseteq \Pol$, let
  \begin{align*}
    \lambda_{x,n}(A) = \frac{\lambda(A \cap B(x,1/n))}{\lambda(B(x,1/n))}.
  \end{align*}
  This is well defined, since $\lambda$ is fully supported and so
  $\lambda(B(x,1/n)) > 0$. For $\xi \in \Pol^G$, let $\lambda_n^\xi =
  \prod_{g \in G}\lambda_{\xi(g),n} \in \cP(\hat\Pol^G)$ be the
  product measure with marginal $\lambda_{\xi(g),n}$ on coordinate
  $g$. Note that $g_*\lambda_n^\xi = \lambda_n^{g\xi}$. 
  
    Let $\bar\nu_n \in \cP(\hat\Pol^G)$ be given by
  \begin{align*}
    \bar\nu_n = \int_{\Pol}\lambda_n^\xi d\nu(\xi).
  \end{align*}
  Then, since $g_*\lambda_n^\xi = \lambda_n^{g\xi}$, 
  \begin{align*}
    \sum_{g \in G}\mu(g)g_*\bar\nu_n = \int_{\Pol}\lambda_n^\xi
    d\left(\sum_{h \in G}\mu(g)dg_*\nu\right)(\xi),
  \end{align*}
  which, by the $\mu$-stationarity of $\nu$, is equal to $\bar\nu_n$. Hence
  $\bar\nu_n \in \cP_\mu(\hat\Pol^G)$. Note that $\bar\nu_n$ is
  non-atomic, since each $\lambda_n^\xi$ is non-atomic.

  Let
  \begin{align*}
    \nu_n = \frac{1}{n}\lambda^G+\frac{n-1}{n}\bar\nu_n.
  \end{align*}
  Since $\lambda^G$ is invariant, $\nu_n$ is
  $\mu$-stationary. Furthermore, $\ell_*\nu_n \in \bcP(\hat\Pol)$
  where $\ell:Z^G\to Z$ is the projection map to the identity
  coordinate. Hence $\nu_n \in \bcP_\mu(\hat\Pol^G)$.

  It remains to be shown that $\lim_n\nu_n = \nu$. Clearly, this will
  follow if we show that $\lim_n\bar\nu_n = \nu$, since $\lambda^G/n$
  converges to the zero measure. Let $d(\cdot,\cdot)$ be the
  compatible metric on $\Pol$ used to define $\lambda_n^\xi$, let $G =
  \{g_1,g_2,\ldots\}$ and let
  \begin{align*}
    \hat d(\xi,\xi') = \sum_i 2^{-i}d(\xi(g_i),\xi'(g_i))
  \end{align*}
  be a compatible metric on $\Pol^G$. Then the support of
  $\lambda_n^\xi$ is contained in a $\hat d$-ball of radius $1/n$
  around $\xi$. Hence $\lim_n\lambda^\xi_n = \delta_\xi$. It follows
  by the bounded convergence theorem that
  \begin{align*}
    \lim_{n\to\infty}\bar\nu_n =
    \lim_{n\to\infty}\int_{\Pol}\lambda_n^\xi d\nu(\xi)
    = \int_{\Pol}\delta_\xi d\nu(\xi)
    = \nu.
  \end{align*}

\end{proof}

\begin{proof}[Proof of Proposition~\ref{claim:residual}]
  Let $\beta \colon \bB \to \hat\Pol$ be a homeomorphism from the
  Baire space to the dense $G_\delta$ subset $\hat\Pol \subset
  \Pol$. We extend $\beta$ to a map $\bB^G \to \hat\Pol^G$ by acting
  independently in each coordinate. Thus $\beta$ is a homeomorphism
  between $\bB^G$ and $\hat\Pol^G$, which, furthermore, commutes with
  the $G$-action. Hence $\beta_*$ is a homeomorphic measure conjugacy
  between $\cP_\mu(\bB^G)$ and $\cP_\mu(\hat\Pol^G)$ and thus
  $\bcP_\mu(\bB^G)$ and $\bcP_\mu(\hat\Pol^G)$ are dynamically
  generically equivalent. Clearly, the natural embedding
  $\bcP_\mu(\hat\Pol^G) \hookrightarrow \cP_\mu(\Pol^G)$ is a homeomorphic
  measure conjugacy.  Accordingly, we prove the claim by showing that
  $\bcP_\mu(\hat\Pol^G)$ is residual in $\cP_\mu(\Pol^G)$.  
  
  By Lemma~\ref{clm:nu-k}, $\bcP_\mu(\hat\Pol^G)$ is dense in
  $\cP_\mu(\Pol^G)$.  It thus remains to be shown that
  $\bcP_\mu(\hat\Pol^G)$ is a $G_\delta$. We do this in two
  steps. First, we show that $\bcP_\mu(\hat\Pol^G)$ is $G_\delta$ in
  $\cP_\mu(\hat\Pol^G)$. Then, we show that $\cP_\mu(\hat\Pol^G)$ is a
  $G_\delta$ in $\cP_\mu(\Pol^G)$.

  Identifying $\hat\Pol$ with $\bB$, we show that $\bcP_\mu(\bB^G)$ is
  $G_\delta$ in $\cP_\mu(\bB^G)$. Let $\cS$ be a countable base 
  for the topology of $\bB$.  For
  $S \in \cS$, the set
  \begin{align*}
    U_{S,n} = \{\nu \in \cP_\mu(\bB^G)\,:\,\ell_*\nu(S)>1/n\} 
  \end{align*}
  is open by the portmanteau Theorem, where, to remind the reader,
  $\ell_*\nu$ is the projection of $\nu$ on the identity
  coordinate. Let
  \begin{align*}
    F =\bigcap_{S \in \cS} \bigcup_{n \in \N}U_{S,n}
  \end{align*}
  be the measures with an identity marginal that is supported
  everywhere on $\bB$. Note $F$ is a $G_\delta$.

  Having an atom of mass at least $1/n$ is a closed property. Hence
  the complementary set
  \begin{align*}
    W_n = \{\nu \in \cP_\mu(\bB^G)\,:\,\ell_*\nu(\{x\}) < 1/n \mbox{
      for all } x \in \bB\} 
  \end{align*}
  is open. Let $N = \bigcap_nW_n$ denote the measures with non-atomic marginals. Hence
  \begin{align*}
    \bcP_\mu(\bB^G) = F \cap N
  \end{align*}
  is a $G_\delta$ in $\cP_\mu(\bB^G)$. It remains to be shown
  that $\cP_\mu(\hat\Pol^G)$ is a $G_\delta$ in $\cP_\mu(\Pol^G)$.

  Let $\Pol^G \setminus \hat\Pol^G = \cup_{k \in \N}C_k$, where $C_k
  \subset \Pol^G$ is closed. These exist since $\hat\Pol^G$ is a
  $G_\delta$ in $\Pol^G$. Then
  \begin{align*}
    V_{k,n} = \{\nu \in \cP_\mu(\Pol^G) \,:\, \nu(C_k) < 1/n\} 
  \end{align*}
  is open by the portmanteau Theorem. Hence
  \begin{align*}
    \cP_\mu(\hat\Pol^G) = \bigcap_{k \in \N}\bigcap_{n \in \N}V_{k,n}
  \end{align*}
  is a $G_\delta$ in $\cP_\mu(\Pol^G)$.
\end{proof}

\subsection{The Polish group $\H$}

We now proceed to construct $\H$. We defer some of the proofs to
Appendix~\ref{app:proofs-dyn-gen}.

Denote by $\bcP([0,1])$ the space of fully supported, non-atomic
measures on the interval $[0,1]$, endowed with the weak* topology on
$\cP([0,1])$.  Given $\nu \in \bcP([0,1])$, we can define its
``inverse cumulative distribution function'' $h \colon [0,1] \to
[0,1]$ by
\begin{align*}
  h^{-1}(t) = \nu([0,t)).
\end{align*}
It is easy to verify that $h$ is an order preserving homeomorphism of
the interval $[0,1]$ that fixes $0$ and $1$. Let $\H$ denote the group
of all such homeomorphisms, with the topology inherited from the space
of continuous functions on $[0,1]$. Then $\H$ is
Polish~\cite{GK98}. Since $h_* \nu \in \bcP([0,1])$ for any $h \in \H$
and $\nu \in \bcP([0,1])$, $\H$ acts on $\bcP([0,1])$. This action is
jointly continuous~\cite{GK98}, free and transitive.

Denote by $\Irr$ the space of irrational numbers in the interval
$[0,1]$, equipped with the subspace topology inherited from the
interval. Because of continued fractions expansions this space is homeomorphic to the Baire space $\bB$. Since
$\Irr$ differs from $[0,1]$ by a countable number of points,
$\bcP([0,1])$ and $\bcP(\Irr)$ can be identified. Let $\alpha \colon \bB
\to \Irr$ be a homeomorphism.  Then $\H$ acts on $\bcP(\bB)$ by
\begin{align*}
  h\nu := \alpha_*^{-1}h_*\alpha_*\nu.
\end{align*}
Since $\H$ acts continuously, transitively and freely on $\bcP(\Irr)$,
it also acts continuously, transitively and freely on $\bcP(\bB)$.

Denote by $\bcP(\bB^G)$ (resp.\ $\bcP([0,1]^G)$) the space of
probability measures whose projections on each coordinate are in
$\bcP(\bB)$ (resp.\ $\bcP([0,1])$).

Extending $\alpha$ to a map $\bB^G \to \Irr^G$ by acting independently
in each coordinate, it follows that $\alpha_* \colon \bcP(\bB^G) \to
\bcP([0,1]^G)$ is a homeomorphism. Hence, as above, we can define a
continuous action of $\H$ on $\bcP(\bB^G)$ by
\begin{align*}
  h\nu = \alpha_*^{-1}h_*\alpha_*\nu,
\end{align*}
where here the action $\H \curvearrowright \Irr^G$ is also independent
on each coordinate. As before, the action $\H \curvearrowright
\bcP(\bB^G)$ is continuous. Since $\alpha$ acts on each coordinate
separately, the $\H$-action on $\bcP(\bB^G)$ commutes with the $G$-action by
shifts.

Let $\ell \colon \bB^G \to \bB$ be the projection on the identity
coordinate, given by $\ell(\xi) = \xi(e)$.  Denote by $\bcP_\mu(\bB^G)
= \cP_\mu(\bB^G) \cap \bcP(\bB^G)$ the space of $\mu$-stationary
measures whose projection on each coordinate is in
$\bcP(\bB)$. Equivalently, one can just require that the projection on
the identity coordinate be in $\bcP(\bB)$, since the different
marginals measures are mutually absolutely continuous.

Note that the $\H$ action on $\bcP(\bB^G)$ restricts to an action on
$\bcP_\mu(\bB^G)$, since the $\H$- and $G$-actions on $\bcP(\bB^G)$
commute.  This concludes our definition of $\H$ as a Polish group
acting on $\bcP_\mu(\bB^G)$.

It thus remains to be shown that $A_\mu(G,\bB,\lambda)$ and
$\bcP_\mu(\bB)$ are dynamically generically equivalent. 

\subsection{The map $E$}\label{sec:E}

For $\zeta \in \cP(\bB)$, denote by
\begin{align*}
  \cP_\mu^\zeta(\bB^G) = \{\nu \in
  \cP_\mu(\bB^G)\,:\,\ell_*\nu = \zeta\}
\end{align*}
the set of $\mu$-stationary measures on $\bB^G$ with marginal $\zeta$
on the identity coordinate. Recall that $\lambda$ is any
fully-supported purely non-atomic Borel probability measure on $\bB$;
we will be interested in $\cP^\lambda_\mu(\bB^G)$. Note that
$\cP^\lambda_\mu(\bB^G) \subseteq \bcP_\mu(\bB^G)$, since $\lambda$ is
non-atomic and supported everywhere.

Define
\begin{equation*}
  \begin{array}{rcrcl}
    D&\colon&\H \times \cP^\lambda_\mu(\bB^G) &\longrightarrow &\bcP_\mu(\bB^G)\\
    & & (h,\nu) &\longmapsto     &h_*\nu.
\end{array}
\end{equation*}

In~\cite{GK98} (see specifically Equation 14 and the preceding
remark) it is shown that the map $\H \times \cP^\zeta_\mu([0,1]^G) \to
\bcP_\mu([0,1]^G)$ given by $(h,\nu) \mapsto h_*\nu$ is a
homeomorphism, for $\zeta$ the Lebesgue measure on $[0,1]$. Since
$\bcP_\mu([0,1]^G)$ and $\bcP_\mu(\bB^G)$ are homeomorphic\footnote{As
  are $\cP^\zeta_\mu([0,1]^G)$ and $\cP^\lambda_\mu(\bB^G)$; one can
  take $\lambda = \alpha^{-1}_*\zeta$.}, it follows that
\begin{lem}
  \label{lem:D}
  $D$ is a homeomorphism.
\end{lem}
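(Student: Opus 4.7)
The plan is to prove that $D$ is a homeomorphism by directly verifying bijectivity plus bicontinuity, leveraging the already-established properties of the $\H$-action on $\bcP(\bB)$: joint continuity, freeness, and transitivity, together with the cited Glasner-King fact that the analogous map over $[0,1]$ is a homeomorphism. The only substantial transfer step is to move the $[0,1]$ result to the $\bB$ setting via the coordinate-wise extension of the homeomorphism $\alpha \colon \bB \to \Irr$.

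First I would establish bijectivity. For injectivity: if $D(h_1,\nu_1)=D(h_2,\nu_2)$, pushing forward to the identity coordinate via $\ell$ gives $h_1\lambda = h_2\lambda$ in $\bcP(\bB)$ (since the $\H$-action commutes with the $G$-action, and hence with $\ell_*$); because $\H$ acts freely on $\bcP(\bB)$ this forces $h_1=h_2$, and then applying $h_1^{-1}$ to both sides yields $\nu_1=\nu_2$. For surjectivity: given $\eta\in\bcP_\mu(\bB^G)$, set $\eta' = \ell_*\eta \in \bcP(\bB)$; by transitivity of $\H$ on $\bcP(\bB)$ there is a (unique) $h\in\H$ with $h\lambda=\eta'$, and then $\nu := h^{-1}\eta$ satisfies $\ell_*\nu = h^{-1}\eta' = \lambda$, so $\nu\in\cP^\lambda_\mu(\bB^G)$ and $D(h,\nu)=\eta$.

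Next, continuity of $D$ is immediate from joint continuity of the $\H$-action on $\bcP(\bB^G)$, which restricts to $\bcP_\mu(\bB^G)$. For continuity of $D^{-1}$, suppose $\eta_n := D(h_n,\nu_n) \to \eta := D(h,\nu)$ in $\bcP_\mu(\bB^G)$. Applying $\ell_*$ gives $h_n\lambda \to h\lambda$ in $\bcP(\bB)$. The key ingredient is that the orbit map $\H \to \bcP(\bB)$, $h\mapsto h\lambda$, is a homeomorphism: pushing forward by $\alpha_*$, this becomes the map $\H\to \bcP([0,1])$, $h\mapsto h_*\zeta$ with $\zeta := \alpha_*\lambda$, which by transitivity and Glasner-King~\cite{GK98} (inverse CDFs giving a continuous bijection with continuous inverse) is a homeomorphism. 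Consequently $h_n\to h$ in $\H$, and then $\nu_n = h_n^{-1}\eta_n \to h^{-1}\eta = \nu$ by joint continuity of the $\H$-action.

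The main (and essentially only) obstacle is the transfer step used above — verifying that the homeomorphism $\alpha$ extends coordinate-wise to a $G$-equivariant homeomorphism $\alpha^G\colon \bB^G \to \Irr^G$ whose induced map $\alpha^G_*$ identifies $\bcP_\mu(\bB^G)$ with $\bcP_\mu([0,1]^G)$ (this identification is allowed because $\Irr$ differs from $[0,1]$ by countably many points and we only consider measures whose marginals are non-atomic), and that this identification intertwines the $\H$-actions by the very definition $h\nu = \alpha_*^{-1}h_*\alpha_*\nu$. Once this bookkeeping is done, the cited Glasner-King homeomorphism statement for the interval transfers directly, and the step used above (continuity of $h\mapsto h\lambda$'s inverse) follows. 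If desired, the $\zeta = \alpha_*\lambda$ arising need not be Lebesgue: transitivity of $\H$ on $\bcP([0,1])$ lets one conjugate by a fixed $h_0\in\H$ with $(h_0)_*(\text{Leb}) = \zeta$ to reduce to the Lebesgue case as stated in~\cite{GK98}.
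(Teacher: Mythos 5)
Your proof is correct. The paper's own proof of Lemma~\ref{lem:D} is essentially a two-line citation: it quotes from~\cite{GK98} the full statement that $(h,\nu)\mapsto h_*\nu$ is a homeomorphism from $\H\times\cP^\zeta_\mu([0,1]^G)$ onto $\bcP_\mu([0,1]^G)$ for $\zeta$ Lebesgue, and then transfers it to $\bB$ via $\alpha$. You take the same transfer route but import much less from~\cite{GK98}: you only use the single-coordinate fact that the orbit map $h\mapsto h_*\zeta$ is a homeomorphism of $\H$ onto $\bcP([0,1])$ (the inverse-CDF construction), and you rederive the product-level bijectivity and bicontinuity from the freeness, transitivity, and joint continuity of the $\H$-action together with the intertwining $\ell_*\circ h = h\circ\ell_*$ and the $G$-equivariance of the $\H$-action. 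This buys two things: the argument is self-contained modulo a much smaller citation, and your conjugation by a fixed $h_0$ with $(h_0)_*(\mathrm{Leb})=\alpha_*\lambda$ handles an arbitrary fully supported non-atomic $\lambda$ cleanly, whereas the paper's footnote quietly specializes to $\lambda=\alpha^{-1}_*\zeta$. The one step worth making explicit in a final write-up is the continuity of inversion in the Polish group $\H$ (used when you pass from $h_n\to h$ to $\nu_n=h_n^{-1}\eta_n\to h^{-1}\eta$), but this is standard for the homeomorphism group of $[0,1]$ with the uniform topology.
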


Given $a \in A_\mu(G,\bB,\lambda)$, let $\pi_a \colon \bB \to \bB^G$
be given by
\begin{align}
  \label{eq:pi}
  [\pi_a(x)](g) = a(g^{-1})x. 
\end{align}
For a fixed $a$, $\pi_a$ is $G$-equivariant, since for all $k,g \in G$
and $x \in \bB$,
\begin{align*}
  [\pi_a(a(k)x)](g) = a(g^{-1})a(k)x = a(g^{-1}k)x =
  [\pi_a(x)](k^{-1}g) = [k\pi_a(x)](g).
\end{align*}
It follows that $\pi_{a*}\lambda \in \cP_\mu(\bB^G)$. Furthermore,
$\ell \circ \pi_a$ is the identity, and so in particular
$\pi_{a*}\lambda \in \cP^\lambda_\mu(\bB^G)$.  Define
\begin{equation*}
  \begin{array}{rcrcl}
    F&\colon&A_\mu(G,\bB,\lambda) &\longrightarrow &\cP^\lambda_\mu(\bB^G)\\
    & & a&\longmapsto     &\pi_{a*}\lambda.
\end{array}
\end{equation*}

$F$ is one to one, since disintegrating $\pi_{a*}\lambda$ with respect
to the projection $f \mapsto f(e)$ from $\bB^G$ to $\bB$ yields point
mass distributions as the fiber measures, from which the $a$-orbits of
$\lambda$-a.e.\ $x \in \bB$ can be reconstructed.  Also, $a$ and
$F(a)$ are always measurable conjugate, with $\pi_a$ being the
$G$-equivariant measurable isomorphism.  We furthermore prove in
Appendix~\ref{app:proofs-dyn-gen} the following two lemmas.  Analogues
of these lemmas appear in~\cite{GK98} (see pages 239 and 240), for the
measure preserving setting.
\begin{lem}
  \label{lem:F}
  $F$ is homeomorphism onto its image.
\end{lem}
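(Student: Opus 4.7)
The plan is to show that $F$ is continuous and that its (set-theoretic) inverse $F^{-1}$, defined on $F(A_\mu(G,\bB,\lambda))\subseteq\cP^\lambda_\mu(\bB^G)$, is continuous; since $F$ was already observed to be injective, this yields that $F$ is a homeomorphism onto its image. Fix once and for all a countable base $\cS$ of clopen subsets of $\bB$ (e.g.\ the basic cylinders $[s]$, $s\in\N^{<\omega}$). The key identity, immediate from the definition $\pi_a(x)(g)=a(g^{-1})x$, is
\begin{align*}
  F(a)\bigl(\{\xi\in\bB^G : \xi(e)\in S_1,\ \xi(g)\in S_2\}\bigr) \;=\; \lambda\bigl(S_1\cap a(g)S_2\bigr),
\end{align*}
and the cylinder on the left is clopen in $\bB^G$, so its indicator is bounded and continuous.

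For the continuity of $F^{-1}$, assume $F(a_n)\to F(a)$ in weak*. Integrating the above clopen indicator gives $\lambda(S_1\cap a_n(g)S_2)\to\lambda(S_1\cap a(g)S_2)$ for every $g\in G$ and $S_1,S_2\in\cS$, which by the subbase description recalled in \S\ref{sec:vweak} is exactly $a_n\to a$ in the very weak topology on $A_\mu(G,\bB,\lambda)$. Conversely, for the continuity of $F$, suppose $a_n\to a$ in the very weak topology. The first step will be to prove that $\{F(a_n)\}$ is uniformly tight in $\cP(\bB^G)$: by Proposition~\ref{prop:bounded-rn} (Kaimanovich--Vershik) the Radon--Nikodym derivative $d\,a_n(g^{-1})_*\lambda/d\lambda$ is bounded above by a constant depending only on $g$, so the $g$-coordinate marginal of $F(a_n)$ is uniformly absolutely continuous with respect to $\lambda$. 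Since $\lambda$ is tight on the Polish space $\bB$, for each $\epsilon>0$ one can choose compact sets $L_g\subset\bB$ whose $a_n(g^{-1})_*\lambda$-complement masses are summable (over a fixed enumeration of $G$) to at most $\epsilon$ uniformly in $n$; a union bound over coordinates then yields $F(a_n)(\prod_g L_g)>1-\epsilon$, proving tightness.

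By Prokhorov's theorem it then suffices to identify the weak* limit of any convergent subsequence as $F(a)$. If $F(a_{n_k})\to\nu'$, applying the clopen-cylinder identity together with the very weak convergence of $a_n$ to $a$ shows that the $\{e,g\}$-marginal of $\nu'$ equals that of $F(a)$ for every $g\in G$. The latter is supported on the graph $\{(x,a(g^{-1})x):x\in\bB\}$, so for $\nu'$-a.e.\ $\xi$ one has $\xi(g)=a(g^{-1})\xi(e)$ for this fixed $g$; taking a countable union of null sets over $g\in G$, $\nu'$ is concentrated on the graph $\pi_a(\bB)$, and combined with the $\{e\}$-marginal $\ell_*\nu'=\lambda$ this forces $\nu'=\pi_{a*}\lambda=F(a)$. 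The principal technical point is this rigidity step: because $F(a)$ is supported on a measurable graph over the identity coordinate, the agreement of all two-coordinate $\{e,g\}$-marginals with those of $F(a)$ already pins down $\nu'$; everything else reduces to the clopen-cylinder identity together with the uniform $L^\infty$-bound on Radon--Nikodym derivatives.
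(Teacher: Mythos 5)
Your proposal is correct, and your treatment of the continuity of $F^{-1}$ coincides with the paper's: weak* convergence tested on the clopen two--coordinate cylinders $A_{S_1,S_2,g}=\{\xi:\xi(e)\in S_1,\ \xi(g)\in S_2\}$, via the identity $F(a)(A_{S_1,S_2,g})=\lambda(S_1\cap a(g)S_2)$, is exactly very weak convergence of the actions. Where you genuinely diverge is the continuity of $F$. The paper argues directly that, since each $F(a_n)$ is a graph measure, it is determined by its two--dimensional marginals and weak* convergence of the sequence is equivalent to convergence of the values $\nu_n(A_{S_1,S_2,g})$; the implicit point is that very weak convergence $a_n(g)\to a(g)$ already forces $\lambda\bigl(a_n(g)S\vartriangle a(g)S\bigr)\to 0$ (weak $L^2$ convergence of indicators together with convergence of their norms), so all finite--dimensional cylinder probabilities converge and the clopen cylinders are convergence determining. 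You instead establish uniform tightness of $\{F(a_n)\}$ from the Kaimanovich--Vershik bound of Proposition~\ref{prop:bounded-rn} on the Radon--Nikodym derivatives, invoke Prokhorov, and pin down every subsequential limit as $F(a)$ by matching the $\{e,g\}$--marginals and exploiting that $F(a)$ is concentrated on a measurable graph over the identity coordinate. Both arguments are sound. Yours is longer but makes explicit the compactness issue on the non-compact space $\bB^G$ that the paper's one-line ``hence'' glosses over, at the cost of importing Proposition~\ref{prop:bounded-rn}, which the paper's proof of this lemma does not need; the paper's route is shorter but leans on the unstated (though true) fact that for graph measures convergence of two--dimensional marginal values upgrades to symmetric-difference convergence of the images of sets.
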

\begin{lem}
  \label{lem:F-res}
  The image of $F$ is residual in $\cP^\lambda_\mu(\bB^G)$.
\end{lem}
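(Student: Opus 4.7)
The plan is to characterize $F(A_\mu(G,\bB,\lambda))$ as a natural $G_\delta$ subset of $\cP^\lambda_\mu(\bB^G)$ and then establish its density, following the blueprint of \cite{GK98} adapted to the $\mu$-stationary setting.

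I would first show that $\nu\in F(A_\mu(G,\bB,\lambda))$ if and only if $\nu$ is \emph{functional}, meaning that for each $g\in G$ the coordinate map $\xi\mapsto\xi(g)$ is $\nu$-a.s.\ a measurable function $\phi_g$ of $\ell(\xi)=\xi(e)$. The forward implication is immediate from the definition of $\pi_a$. For the converse, given functional $\nu$ the disintegration over $\ell$ yields measurable maps $\phi_g$; the cocycle identity $\phi_{kh}=\phi_h\circ\phi_k$ is then forced by $\mu$-stationarity, because the shifted measure $g_*\nu$ is itself functional (with functions $\tilde\phi^{(g)}_h=\phi_{g^{-1}h}\circ\phi_{g^{-1}}^{-1}$), and equality of the Dirac disintegrations in $\nu=\sum_g\mu(g)g_*\nu$ compels $\tilde\phi^{(g)}_h=\phi_h$ for every $g$ in the support of $\mu$, which extends to all of $G$ by the generating assumption. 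Setting $a(g):=\phi_{g^{-1}}$ then produces the required homomorphism in $A_\mu(G,\bB,\lambda)$ with $F(a)=\nu$.

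To realize functionality as a $G_\delta$, fix a countable base $\{V_i\}$ of clopen subsets of $\bB$ and for $g\in G$, $i,n\in\N$ set
\begin{align*}
\mathcal{U}_{g,i,n} &= \bigl\{\nu\in\cP^\lambda_\mu(\bB^G):V_{g,i}(\nu)<1/n\bigr\}, \\
V_{g,i}(\nu) &:= \int_\bB p_{g,i}(\nu,x)\bigl(1-p_{g,i}(\nu,x)\bigr)\,d\lambda(x),
\end{align*}
where $p_{g,i}(\nu,x):=\nu_x\bigl\{\xi:\xi(g)\in V_i\bigr\}$ for $\{\nu_x\}$ the disintegration of $\nu$ over $\ell$. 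The image of $F$ is $\bigcap_{g,i,n}\mathcal{U}_{g,i,n}$. Openness follows from the decomposition $V_{g,i}(\nu)=\nu\bigl\{\xi(g)\in V_i\bigr\}-\int p_{g,i}(\nu,x)^2\,d\lambda(x)$: the first term is weak*-continuous in $\nu$, while the second, by the martingale convergence theorem applied to $\bE_\lambda[\ind_{\xi(g)\in V_i}\mid\ell]$, is the supremum over refining finite clopen partitions $\cP$ of $\bB$ of the weak*-continuous functionals $\nu\mapsto\sum_{P\in\cP}\nu\bigl(\xi(g)\in V_i,\,\xi(e)\in P\bigr)^2/\lambda(P)$, hence is lower semi-continuous. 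Therefore $V_{g,i}$ is upper semi-continuous and each $\mathcal{U}_{g,i,n}$ is open.

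For density, given $\nu\in\cP^\lambda_\mu(\bB^G)$, fix any Borel measure-isomorphism $\psi:(\bB^G,\nu)\to(\bB,\lambda)$. Using an identification $\bB=\N^\N$, construct Borel automorphisms $T_n$ of $(\bB,\lambda)$ such that $(T_n\circ\psi)(\xi)$ agrees with $\ell(\xi)$ on the first $n$ coordinates for $\nu$-a.e.\ $\xi$: disintegrate $\lambda$ in the source over the $n$-coordinate map $y\mapsto(\psi^{-1}(y)(e)_1,\dots,\psi^{-1}(y)(e)_n)$ and $\lambda$ in the target over the first-$n$-coordinate projection, and define $T_n$ by a measurable family of isomorphisms between the resulting non-atomic standard-Borel fiber measures. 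Setting $\psi_n:=T_n\circ\psi$ and $a_n(g):=\psi_n\circ\mathrm{shift}_g\circ\psi_n^{-1}$, the computation $F(a_n)(x)(g)=\psi_n(g^{-1}\psi_n^{-1}(x))$ together with the first-$n$-coordinate agreement of $\psi_n$ and $\ell$ forces $F(a_n)$ and $\nu$ to assign equal mass to every cylinder of $\bB^G$ involving only the first $n$ coordinates at any finite set of group elements; hence $F(a_n)\to\nu$ in the weak* topology. The main obstacle is the simultaneous measurable construction of the fiber isomorphisms defining $T_n$; this is handled by the Lusin--Souslin measurable selection theorem combined with the measurable dependence of disintegrations on their parameters.
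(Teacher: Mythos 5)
Your proof is correct, and the density half is essentially the paper's own argument: you approximate $\nu$ by conjugating the shift through a measure isomorphism $\psi_n\colon(\bB^G,\nu)\to(\bB,\lambda)$ that respects the identity-coordinate cells of level $n$ (the paper's $\varphi_n$ in Claim~\ref{clm:F-dense} is exactly your $\psi_n^{-1}$, with ``agrees on the first $n$ coordinates'' replaced by ``lands in the same cell of $\cS_n$''); note that since the level-$n$ cells are countably many, the gluing of fiber isomorphisms needs no measurable selection theorem, so the ``main obstacle'' you flag is not actually there. Where you genuinely diverge is the $G_\delta$ half. The paper gets this for free: by Lemma~\ref{lem:F}, $F$ is a homeomorphism of the Polish space $A_\mu(G,\bB,\lambda)$ onto its image, and a completely metrizable subspace of a metric space is $G_\delta$ (Lavrentiev/Alexandrov, cited as~\cite[Theorem 12.3]{Ox71}). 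You instead characterize $\Im F$ intrinsically as the set of graph measures and exhibit it as $\bigcap_{g,i,n}\{V_{g,i}<1/n\}$ with $V_{g,i}$ upper semi-continuous via the decomposition into a continuous term minus a supremum of continuous functionals; this is more work but buys an explicit description of the generic measure as ``functional,'' which the abstract argument does not provide. Two small repairs to your converse characterization: the relation $\tilde\phi^{(g)}_h=\phi_{g^{-1}h}\circ\phi_{g^{-1}}^{-1}$ presupposes the invertibility you are trying to establish; it is cleaner to derive $\phi_{kh}=\phi_h\circ\phi_k$ directly from $k^{-1}_*\nu\sim\nu$ (so that $\nu$-a.e.\ $\xi$ lies in the graph of both $\nu$ and $k^{-1}_*\nu$), after which invertibility of each $\phi_g$ and nonsingularity follow, and the extension from $\supp\mu$ to $G$ by the generating hypothesis goes through as you say.
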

We prove these lemmas in the appendix. 

Define
\begin{equation*}
  \begin{array}{rcrcl}
    E&\colon&\H \times A_\mu(G,\bB,\lambda) &\longrightarrow &\bcP_\mu(\bB^G)\\
    & & (h, a)&\longmapsto     &h_*\pi_{a*}\lambda.
\end{array}
\end{equation*}
Alternatively, $E(h,a) = D(h, F(a))$.

The following proposition establishes the properties of $E$ needed for
Theorem~\ref{thm:dyn-gen-equivalent}.
\begin{prop}
  \label{prop:E-properties}
  $E$ has the following properties:
  \begin{enumerate}
  \item $a \in A_\mu(G,\bB,\lambda)$ and $G\cc (\bB^G,E(h,a))$ are measurably
    conjugate for all $h \in \H$.
  \item $E$ is a homeomorphism onto its image.
  \item The image of $E$ is residual in $\bcP_\mu(\bB^G)$.
  \end{enumerate}
\end{prop}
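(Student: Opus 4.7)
The plan is to assemble the three parts from the structural facts already recorded about $D$, $F$, and the action of $\H$ on $\bcP(\bB^G)$. Note that by construction, the $\H$-action on $\bcP(\bB^G)$ is defined coordinatewise through $\alpha$, so it commutes with the shift action of $G$. This commutation is the linchpin for part~(1).

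For part~(1), I would first recall that $F(a)=\pi_{a*}\lambda$ is measurably conjugate to $a$ with $\pi_a$ from \eqref{eq:pi} the $G$-equivariant isomorphism. Since the $\H$-action on $\bB^G$ commutes with the $G$-action (as it is defined coordinatewise), the map $h\colon (\bB^G,\pi_{a*}\lambda)\to (\bB^G,h_*\pi_{a*}\lambda)$ is a $G$-equivariant Borel isomorphism that pushes the first measure to the second. Composing, $h\circ\pi_a$ is a measurable conjugacy between $a$ and $G\cc(\bB^G,E(h,a))$.

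For part~(2), I would observe that $E=D\circ(\mathrm{id}_\H\times F)$. By Lemma~\ref{lem:F} the map $F$ is a homeomorphism onto its image, so $\mathrm{id}_\H\times F$ is a homeomorphism from $\H\times A_\mu(G,\bB,\lambda)$ onto $\H\times F(A_\mu(G,\bB,\lambda))\subset \H\times \cP^\lambda_\mu(\bB^G)$. Then Lemma~\ref{lem:D} says $D$ is a homeomorphism on the full domain $\H\times \cP^\lambda_\mu(\bB^G)$; restricting it to the above subset yields a homeomorphism onto its image. Composing, $E$ is a homeomorphism onto its image.

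For part~(3), since $D$ is a homeomorphism between $\H\times\cP^\lambda_\mu(\bB^G)$ and $\bcP_\mu(\bB^G)$, it preserves residuality; so it suffices to show that $\H\times F(A_\mu(G,\bB,\lambda))$ is residual in $\H\times\cP^\lambda_\mu(\bB^G)$. By Lemma~\ref{lem:F-res}, $F(A_\mu(G,\bB,\lambda))$ contains a dense $G_\delta$ subset $R$ of $\cP^\lambda_\mu(\bB^G)$. Since a product of a Polish space with a dense $G_\delta$ is a dense $G_\delta$ in the product (e.g.\ by Kuratowski--Ulam, or directly since both factors are Polish), $\H\times R$ is residual in $\H\times\cP^\lambda_\mu(\bB^G)$, and hence so is $\H\times F(A_\mu(G,\bB,\lambda))$. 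Applying $D$ finishes the proof. There is no real obstacle here; the proposition is essentially a bookkeeping consequence of Lemmas~\ref{lem:D}, \ref{lem:F}, and \ref{lem:F-res} together with the coordinatewise definition of the $\H$-action.
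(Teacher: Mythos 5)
Your proposal is correct and follows essentially the same route as the paper: part (1) via the conjugacy $h\circ\pi_a$ using the commutation of the $\H$- and $G$-actions, part (2) from $E=D\circ(\mathrm{id}\times F)$ together with Lemmas~\ref{lem:D} and~\ref{lem:F}, and part (3) from Lemma~\ref{lem:F-res} plus the fact that the product of a Polish space with a residual set is residual (the paper cites~\cite[Page 57]{Ox71} where you invoke Kuratowski--Ulam, but this is the same point). No gaps.
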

\begin{proof}
  \begin{enumerate}
  \item This follows from the fact that both $\pi_a$ and $\H$ commute
    with $G$; given an action $a \in A_\mu(G,\bB,\lambda)$, the
    equivariant isomorphism (up to $\lambda$-null sets) between
    $\lambda$ and $E(h,a)$ is given by $h \circ \pi_a \colon \bB \to
    \bB^G$.
  \item Since $E(h,a) = D(h, F(a))$, and since $D$ and $F$ are both
    homeomorphisms onto their images (Lemmas~\ref{lem:D}
    and~\ref{lem:F}), it follows that $E$ is a homeomorphism onto its
    image.
  \item This can be seen by considering the following sequence of
    embeddings, each of which - as we explain below - is a
    homeomorphic embedding with a residual image:
    \begin{align*}
      \H \times A_\mu(G,\bB,\lambda) \quad \overset{\mathrm{id} \times
        F}{\longrightarrow} \quad \H \times \cP^\lambda_\mu(\bB^G)
      \quad \overset{D}{\longrightarrow} \quad \bcP_\mu(\bB^G).
    \end{align*}
    
    By Lemmas~\ref{lem:F-res} and~\ref{lem:F}, $F$ embeds $A_\mu(G,\bB,\lambda)$
    homeomorphically into a residual subset of
    $\cP^\lambda_\mu(\bB^G)$. Hence $\mathrm{id} \times F$ embeds $\H
    \times A_\mu(G,\bB,\lambda)$ into a residual subset of $\H \times
    \cP^\lambda_\mu(\bB^G)$, by~\cite[Page 57]{Ox71}.

    Since $D$ is a homeomorphism between $\H \times
    \cP^\lambda_\mu(\bB^G)$ and $\bcP_\mu(\bB^G)$ (Lemma~\ref{lem:D}),
    it follows that $E = D \circ (\mathrm{id} \times F)$ embeds $\H
    \times A_\mu(G,\bB^G)$ into a residual subset of $\bcP_\mu(\bB^G)$.

  \end{enumerate}
\end{proof}

\section{Applications of the correspondence principle} 
In this section we prove Theorems~\ref{thm:generic-poisson-boundary}
and~\ref{thm:weak-rohlin}.

\begin{proof}[Proof of Theorem~\ref{thm:generic-poisson-boundary}]
  Let $\X = \{0,1\}^\N$ be the Cantor space, equipped with the usual
  product topology. Let $\cP^{ext}_\mu(\X^G) \subset \cP_\mu(\X^G)$ be
  the subset of all measures $\eta$ such that $G \cc (\X^G,\eta)$ is
  an essentially free ergodic extension of the Poisson boundary. By
  Theorem~\ref{thm:poisson-dense} $\cP^{ext}_\mu(\X^G)$ is dense in
  $\cP_\mu(\X^G)$. It is well-known that an action is an extension of
  the Poisson boundary if and only if it has maximal $\mu$-entropy. So
  if $H(\mu)<\infty$ then by Theorem~\ref{thm:G-deltas},
  $\cP^{ext}_\mu(\X^G)$ is a $G_\delta$ subset of
  $\cP_\mu(\X^G)$. Since a dense $G_\delta$ is residual, and since
  $\X$ is a perfect Polish space, it follows from
  Theorem~\ref{thm:dyn-gen-equivalent} that the same holds for
  $A_\mu(G,X,\nu)$ (with the very weak topology). By another
  application of Theorem~\ref{thm:dyn-gen-equivalent}, the same also
  holds for $\cP_\mu(\Pol^G)$, where $\Pol$ is any perfect Polish
  space.

  The second claim of this theorem follows in a similar way.
\end{proof}

\begin{proof}[Proof of Theorem \ref{thm:weak-rohlin}]
  Recall from the proof of Theorem \ref{thm:generic-poisson-boundary}
  above that $\cP^{ext}_\mu(\X^G)$ is a residual subset of
  $\cP_\mu(\X^G)$.  So the Correspondence Principle (Theorem
  \ref{thm:generic-equivalence}) implies that the subset
  $A^{ext}_\mu(G,X,\nu)$ of all actions $a\in A_\mu(G,X,\nu)$ that are
  ergodic essentially free extensions of the Poisson boundary is
  residual in $A_\mu(G,X,\nu)$. 
  
  Let $b \in A_\mu(G,X,\nu)$. We will show $b$ is in the closure of
  $\Aut(X,\nu)a$. By \S \ref{sec:vweak-space}, it suffices to show
  that for every $\epsilon>0$, measurable partition
  $\cP=\{P_1,\ldots,P_n\}$ of $X$ and finite $W \subset G$ there
  exists $a' \in \Aut(X,\nu)a$ such that
  $$\sup_{1\le i,j\le n}\sup_{g\in W}  | \nu(b(g)P_i \cap P_j) - \nu(a'(g)P_i \cap P_j)|<\epsilon.$$
  Let $\phi:X \to \{1,\ldots, n\}$ be the map $\phi(x)=i$ if $x \in P_i$. Let $\Phi:X \to \{1,\ldots, n\}^G$ be the map $\Phi(x)_g = \phi(b(g)^{-1}x)$. Observe that this is $G$-equivariant with respect to the $b$-action. Let 
  $$Y_i=\{y \in \{1,\ldots, n\}^G:~ y_e = i\}.$$
   By Theorem \ref{thm:0-1-dense}, there exists a $\mu$-stationary probability measure $\kappa$ on $\{1,\ldots, n\}^G$ such that
  \begin{itemize}
  \item $G \cc (\{1,\ldots,n\}^G,\kappa)$ is a $G$-factor of $G \cc^a (X,\nu)$;
  \item $ \sup_{g\in W} \sum_{i,j=1}^n |\Phi_*\nu(Y_i \cap gY_j) - \kappa(Y_i \cap gY_j)|<\epsilon/n^2$.
  \end{itemize}
  Let $\Psi:X \to \{1,\ldots,n\}^G$ be a $G$-factor of $a$ so that $\kappa = \Psi_*\nu$. Let $Q'_i = \Psi^{-1}(Y_i)$.  Observe that $\cQ'=\{Q'_1,\ldots, Q'_n\}$ is a measurable partition of $X$ and 
  $$ \sup_{g\in W}\sum_{i,j=1}^n | \nu(b(g)P_i \cap P_j) - \nu(a(g)Q'_i \cap Q'_j)|<\epsilon/n^2.$$
Because $\cP$ is a partition, the equation above implies the existence of a measurable partition $\cQ=\{Q_1,\ldots, Q_n\}$ of $X$ such that
\begin{itemize}
\item $\nu(P_i)=\nu(Q_i)$ for all $i$
\item $\nu(Q_i \vartriangle Q'_i) < \epsilon/n$ for all $i$.
\end{itemize}

Let $\psi \in \Aut(X,\nu)$ be any measure-preserving transformation such that $\psi(Q_i)=P_i$ for all $i$. Define $a' \in A_\mu(G,X,\nu)$ by $a'(g) =\psi a(g)\psi^{-1}$. It follows that
 $$ \sup_{1\le i,j\le n}\sup_{g\in W} | \nu(b(g)P_i \cap P_j) - \nu(a'(g)P_i \cap P_j)|<\epsilon.$$
Thus $\Aut(X,\nu)a$ is dense as required.
\end{proof}

\begin{proof}[Proof of Corollary \ref{cor:0-1law}]
  In order to deduce the 0-1 law, recall the 0-1 Lemma
  from~\cite{GK98}: Let $\A$ be a BaireCat space and let $\Phi$ be a
  group of homeomorphisms of $\A$, such that there exists a $T \in \A$
  with a dense $\Phi$-orbit. Then each Baire-measurable
  $\Phi$-invariant subset of $\A$ is either residual or meager.

  The term ``BaireCat space'' refers to a space that satisfies the
  Baire Category Theorem. In particular, Polish spaces are BaireCat.
  Since $\Aut(X,\nu)$ acts continuously on the Polish space
  $A_\mu(G,X,\nu)$, the corollary follows from
  Theorem~\ref{thm:weak-rohlin}.
\end{proof}
\appendix


\section{The weak* topology on a space of measures defined by a
  relative property}
\label{sec:weak}

The purpose of this section is to define the weak* topology on a space
of measures defined by a relative property. To be precise, let
$(V,\nu)$ be a standard Borel probability space and $W$ a  Polish space. Let $\cP(V\times W|\nu)$ denote the set of all Borel
probability measures on $V\times W$ that project to $\nu$. We will
show that several natural topologies on this set are equal.

For this purpose, let us assume that $V$ is also a Polish space. The
weak* topology on $\cP(V\times W)$ is defined by: a sequence
$\{\lambda_n\}_{n=1}^\infty$ converges to $\lambda_\infty$ if and only
if: for every compactly supported continuous function $f$ on $V\times
W$, $\int f~d\lambda_n$ converges to $\int f~d\lambda_\infty$. We
regard $\cP(V\times W|\nu)$ as a subspace of $\cP(V\times W)$. We will
show that the subspace topology on $\cP(V\times W| \nu)$ does not
depend on the choice of Polish structure for $V$. This justifies the
following definition: The {\em weak* topology} on $\cP(V\times W|\nu)$ is
the subspace topology inherited from the inclusion $\cP(V\times W|\nu)
\subset \cP(V\times W)$ where $V$ is endowed with an arbitrary Polish
structure and $\cP(V\times W)$ with the usual weak* topology.

Let $\MALG(\nu)$ denote the measure algebra of $\nu$. To be precise,
$\MALG(\nu)$ consists of all measurable subsets of $V$ modulo null
sets. For $A, B \in \MALG(\nu)$ we let $d(A,B)=\nu(A \vartriangle
B)$. With this metric, $\MALG(\nu)$ is a complete separable metric
space.

Let $\cM(W)$ denote the space of all finite Borel measures on $W$ with
the weak* topology. Let $\Map(\MALG(\nu),\cM(W))$ denote the space of
all maps from $\MALG(\nu)$ to $\cM(W)$ with the pointwise convergence
topology. This space has a natural convex structure as it may be
identified with the product space $\cM(W)^{\MALG(\nu)}$.

Given a measure $\lambda \in \cP(V\times W|\nu)$ let $v\mapsto
\lambda^v$ be a measurable map from $V$ to $\cP(W)$ such that
$$\lambda = \int \delta_v \times \lambda^v~d\nu(v).$$
It is a standard fact that such a map exists and is unique up to null sets. 

Define $$\Phi:\cP(V\times W| \nu) \to \Map(\MALG(\nu),\cM(W))$$ by
$$\Phi(\lambda)(A)=\lambda^A$$
where $\lambda^A = \int_A \lambda^v~d\nu(v)$.
\begin{prop}\label{prop:weak}
  The map $\Phi$ is an affine homeomorphism onto its image.
\end{prop}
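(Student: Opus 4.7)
The plan is to verify, in order, that $\Phi$ is affine, injective, continuous, and has continuous inverse on its image. Affineness will be immediate from the uniqueness of disintegration: for $t \in [0,1]$, the measurable map $v \mapsto t\lambda_1^v + (1-t)\lambda_2^v$ disintegrates $t\lambda_1 + (1-t)\lambda_2$, so $\Phi(t\lambda_1+(1-t)\lambda_2)(A) = t\Phi(\lambda_1)(A) + (1-t)\Phi(\lambda_2)(A)$. Injectivity will follow from the identity
\[
\Phi(\lambda)(A)(B) \;=\; \int_A \lambda^v(B)\,d\nu(v) \;=\; \lambda(A \times B),
\]
combined with the fact that a Borel probability measure on $V \times W$ is determined by its values on measurable rectangles.

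Next I would establish continuity of $\Phi$. Given $\lambda_n \to \lambda_\infty$ weakly in $\cP(V \times W|\nu)$, I need $\lambda_n^A(f) \to \lambda_\infty^A(f)$ for every $A \in \MALG(\nu)$ and every bounded continuous $f$ on $W$, equivalently convergence of $\int \ind_A(v)\, f(w)\,d\lambda_n$. The difficulty is that the integrand is not continuous. To get around this I would use that $\nu$ is a Radon probability measure on the Polish space $V$, so continuous functions are dense in $L^1(\nu)$: given $\epsilon>0$, pick a continuous $g\colon V\to[0,1]$ with $\|\ind_A-g\|_{L^1(\nu)}<\epsilon$. Because every $\lambda_n$ and $\lambda_\infty$ projects to $\nu$, replacing $\ind_A$ by $g$ changes each integral by at most $\|f\|_\infty\epsilon$, while the bounded continuous function $(v,w)\mapsto g(v)f(w)$ is handled by the weak* convergence of the $\lambda_n$. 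Letting $\epsilon\to 0$ closes this step. I expect this approximation to be the main technical point of the proof, and it is also the only place where the Polish structure on $V$ and the regularity of $\nu$ genuinely enter.

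For continuity of $\Phi^{-1}$ on the image, I would run a standard subsequence-and-tightness argument. Suppose $\Phi(\lambda_n)\to\Phi(\lambda_\infty)$ pointwise. The $V$-marginal of each $\lambda_n$ is $\nu$, which is tight by Ulam's theorem; the $W$-marginal equals $\Phi(\lambda_n)(V)$, which converges weakly in $\cM(W)$ to $\Phi(\lambda_\infty)(V)$ and is therefore tight by Prokhorov. Consequently $\{\lambda_n\}$ is tight in $\cP(V\times W)$, so every subsequence has a further weak* cluster point $\lambda'$. By the continuity of $\Phi$ established above, $\Phi(\lambda')=\Phi(\lambda_\infty)$, and by injectivity $\lambda'=\lambda_\infty$. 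Uniqueness of subsequential limits, combined with tightness, forces $\lambda_n\to\lambda_\infty$ weakly, completing the proof. The payoff is that $\Map(\MALG(\nu),\cM(W))$ is defined using only the measure algebra of $\nu$ and the Polish space $W$, so the proposition justifies the definition given in the preceding paragraph by showing that the weak* topology on $\cP(V\times W|\nu)$ does not depend on the choice of Polish structure on $V$.
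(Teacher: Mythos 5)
Your proof is correct, but it departs from the paper's argument at the two substantive steps. For continuity of $\Phi$ the paper does not approximate $\ind_A$ in $L^1(\nu)$ by continuous functions; instead it invokes the portmanteau theorem on \emph{continuity sets} (sets $E$ with $\lambda_\infty(\partial E)=0$), shows these form a dense subalgebra of the measure algebra, and deduces $\lambda_n^A(B)\to\lambda_\infty^A(B)$ for $A$ a continuity set of $\nu$ and $B$ a continuity set of the $W$-marginal. Your $L^1$-approximation argument, which exploits the fixed marginal $\nu$ to control $\bigl|\int(\ind_A(v)-g(v))f(w)\,d\lambda_n\bigr|\le\|f\|_\infty\|\ind_A-g\|_{L^1(\nu)}$ uniformly in $n$, is cleaner: it yields convergence of $\lambda_n^A$ for \emph{every} $A\in\MALG(\nu)$ directly, whereas the paper only gets a dense set of $A$'s and tacitly relies on the uniform bound $\|\lambda^A-\lambda^{A'}\|\le\nu(A\vartriangle A')$ to pass to all of $\MALG(\nu)$. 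For continuity of $\Phi^{-1}$ the paper splits into cases: if $V$ is compact it uses that a continuous bijection from a compact space to a Hausdorff space is a homeomorphism, and if $V$ is non-compact it restricts to $K\times W$ for compact $K\subset V$ and tests against compactly supported continuous functions. Your tightness-plus-Prokhorov subsequence argument (marginal tightness of $\nu$ by Ulam and of the weakly convergent $W$-marginals $\Phi(\lambda_n)(V)$, hence joint tightness, hence subsequential limits which must all equal $\lambda_\infty$ by the already-proved continuity and injectivity) is uniform in $V$, avoids the case split, and sidesteps the awkwardness that on a non-locally-compact Polish space such as $\N^\N$ there are essentially no compactly supported continuous functions, so it is arguably more robust than the paper's treatment. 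Both routes use the same injectivity argument via rectangles and the same trivial affineness.
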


\begin{remark}
  The topology on $\Map(\MALG(\nu),\cM(W))$ is independent of the
  topology on $V$. So this proposition shows that the topology on
  $\cP(V\times W| \nu)$ is independent of the topology on $V$.
\end{remark}

\begin{proof}
  Any Borel measure $\lambda$ on $V\times W$ is determined by its
  values on sets of the form $A\times B$ where
  $A\subset V, B \subset W$ are Borel. Note that
  $$\lambda(A\times B) = \lambda^A(B) = \Phi(\lambda)(A)(B).$$
  This proves that $\Phi$ is injective.

  Suppose $\lambda_n \in \cP(V\times W| \nu)$ and $\lim_n \lambda_n =
  \lambda_\infty$ in the weak* topology. To prove that $\Phi$ is
  continuous, it suffices to show that $\lambda_n^A \to
  \lambda^A_\infty$ for every $A \in \MALG(\nu)$. Actually, it
  suffices to show that this is true for every $A$ in a dense subset
  of $\MALG(\nu)$ (because $\lambda$ is completely determined by the
  values $\lambda^A$ for $A$ in a dense subset of $\MALG(\nu)$).
  
  Recall that if $Z$ is any topological space and $Y \subset Z$ then $\partial Y = \overline{Y} \cap
  \overline{Z\setminus Y}$. Given a measure $\zeta$ on $Z$ we say $Y$ is a {\em continuity set} of
  $\nu$ if $\nu(\partial Y)=0$. It is not difficult to show that the
  collection of all continuity sets forms a dense subalgebra of
  $\MALG(\zeta)$ if $Z$ is a Polish space (see, e.g.,~\cite[Lemma 8.4]{bowen2012entropy}; the
  important requirement is the regularity of $\zeta$). 
  
  The portmanteau
  Theorem implies that $\lim_n \lambda_n(E) = \lambda_\infty(E)$ for
  any set $E \subset V\times W$ which is a continuity set for
  $\lambda_\infty$. In particular, $\lim_n \lambda_n^A(B) =
  \lambda_\infty^A(B)$ if $A$ is a continuity set for $\nu$ and $B$ is
  a continuity set for the projection of $\lambda^\infty$ to
  $W$. Because continuity sets of $\Proj_W(\lambda_\infty)$ are dense
  in $\MALG(\Proj_W(\lambda_\infty))$, this implies that $\lim_n
  \lambda_n^A = \lambda_\infty^A$ in the weak* topology on $\cP(W)$
  for every continuity set $A$ of $\nu$. Because continuity sets of
  $\nu$ are dense in $\MALG(\nu)$, it follows that $\Phi(\lambda_n)
  \to \Phi(\lambda_\infty)$ as $n\to\infty$. Because $\lambda$ is
  arbitrary, $\Phi$ is continuous.

  It is clear that $\Phi$ is affine. If $V$ is compact then
  $\Phi^{-1}$ must be continuous on the image of $\Phi$. This proves
  the proposition when $V$ is compact.

  Suppose $V$ is non-compact and let $\{\lambda_n\} \subset
  \cP(V\times W|\nu), \lambda_\infty \in \cP(V\times W|\nu)$ be
  measures such that $\Phi(\lambda_n)$ converges to
  $\Phi(\lambda_\infty)$ as $n\to\infty$. If $K \subset V$ is a
  compact subset then the considerations above imply that $\lambda_n
  \res K \times W\to \lambda_\infty \res K \times W$. Thus if $f \in
  C_c(V\times W)$ is a compactly supported continuous function then
  $$\lim_n \int f~d\lambda_n = \int f~d\lambda_\infty.$$
  This implies $\lambda_n$ converges to $\lambda_\infty$ in the weak*
  topology. So the inverse of $\Phi$ is also continuous which implies
  the proposition.


\end{proof}

\begin{cor}\label{cor:weak}
  Let $(\lambda_n) \subset \cP(V\times W|\nu)$ be a sequence of
  measures. Then the following are equivalent
  \begin{enumerate}
  \item $\lambda_n$ converges to a measure $\lambda_\infty$ in the
    weak* topology on $\cP(V\times W|\nu)$ with respect to any (every)
    Polish structure on $V$.

  \item $\lambda_n^A \to \lambda_\infty^A$ for every $A \in
    \MALG(\nu)$.
  \item $\lambda_n^A \to \lambda_\infty^A$ for every $A$ in some dense
    subset of $\MALG(\nu)$.
  \end{enumerate}
  Moreover, if $\lambda_n^v \to \lambda_\infty^v$ for a.e. $v\in V$
  then (1-3) above hold.
\end{cor}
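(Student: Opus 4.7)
The plan is to deduce this corollary directly from Proposition~\ref{prop:weak} plus a short density argument. By that proposition, the map $\Phi \colon \cP(V\times W|\nu) \to \Map(\MALG(\nu),\cM(W))$ sending $\lambda \mapsto (A \mapsto \lambda^A)$ is an affine homeomorphism onto its image, where the target carries the pointwise convergence topology (and $\cM(W)$ the weak*\ topology). Since the topology on the target is manifestly independent of any Polish structure on $V$, the same holds for the weak*\ topology on $\cP(V\times W|\nu)$, which gives the parenthetical ``any (every)'' in (1). Moreover $\Phi(\lambda_n) \to \Phi(\lambda_\infty)$ pointwise exactly says $\lambda_n^A \to \lambda_\infty^A$ weakly* in $\cM(W)$ for every $A \in \MALG(\nu)$, so (1) $\Leftrightarrow$ (2).

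The implication (2) $\Rightarrow$ (3) is trivial. For (3) $\Rightarrow$ (2), I would use the elementary estimate
$$\|\lambda^A - \lambda^{A'}\|_{TV} \;\le\; \nu(A \vartriangle A'),$$
which says that for each fixed $\lambda$ the assignment $A \mapsto \lambda^A$ is $1$-Lipschitz from $(\MALG(\nu),d)$ into $\cM(W)$ with the total variation norm. Given $A \in \MALG(\nu)$, a bounded continuous $f \colon W \to \R$, and $\epsilon > 0$, pick $A'$ in the given dense subset with $\nu(A \vartriangle A') < \epsilon$ and apply a three-term triangle inequality: the two outer terms $|\int f\, d\lambda_n^A - \int f\, d\lambda_n^{A'}|$ and $|\int f\, d\lambda_\infty^{A'} - \int f\, d\lambda_\infty^A|$ are each bounded by $\|f\|_\infty \epsilon$ via the Lipschitz estimate, while the middle term $|\int f\, d\lambda_n^{A'} - \int f\, d\lambda_\infty^{A'}|$ tends to $0$ by hypothesis. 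Letting $n \to \infty$ and then $\epsilon \to 0$ gives (2).

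For the ``moreover'' clause, suppose $\lambda_n^v \to \lambda_\infty^v$ weakly* for $\nu$-a.e.\ $v$. For any bounded continuous $f \colon W \to \R$ and any $A \in \MALG(\nu)$, the integrands $v \mapsto \int f\, d\lambda_n^v$ are uniformly bounded by $\|f\|_\infty$ and converge pointwise a.e.\ to $v \mapsto \int f\, d\lambda_\infty^v$, so the bounded convergence theorem yields
$$\int_W f\, d\lambda_n^A \;=\; \int_A \int_W f\, d\lambda_n^v \, d\nu(v) \;\longrightarrow\; \int_A \int_W f\, d\lambda_\infty^v \, d\nu(v) \;=\; \int_W f\, d\lambda_\infty^A,$$
establishing (2). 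I don't expect any real obstacle here; the only point worth care is the Lipschitz bound in the $\MALG$-metric used in (3) $\Rightarrow$ (2), but this is immediate from $\lambda^A = \int_A \lambda^v\, d\nu(v)$ and the fact that each $\lambda^v$ is a probability measure.
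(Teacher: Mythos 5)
Your proposal is correct and follows essentially the same route as the paper: (1)$\Leftrightarrow$(2)$\Leftrightarrow$(3) is deduced from Proposition~\ref{prop:weak}, and the ``moreover'' clause is handled by the bounded convergence theorem (the paper tests against $f\in C_c(V\times W)$ to verify (1) directly, while you test against bounded continuous functions on $W$ for each fixed $A$ to verify (2) --- an immaterial difference). Your explicit Lipschitz bound $\|\lambda^A-\lambda^{A'}\|\le\nu(A\vartriangle A')$ for the step (3)$\Rightarrow$(2) is a welcome addition, since the paper dispatches the density reduction with only a parenthetical remark.
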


\begin{proof}
  It follows immediately from the previous proposition that the first
  three items are equivalent. Suppose $\lambda_n^v \to
  \lambda_\infty^v$ for a.e. $v\in V$. Let $V$ be endowed with a
  Polish topology. Let $f \in C_c(V\times W)$. Then
  \begin{eqnarray*}
    \lim_n \int f ~d\lambda_n &=& \lim_n \iint f(v,w) ~d\lambda^v_n(w) d\nu(v) = \iint  \lim_n f(v,w) ~d\lambda^v_n(w) d\nu(v)\\
    &=& \iint f(v,w) ~d\lambda^v_\infty(w) d\nu(v) = \int f ~d\lambda_\infty
  \end{eqnarray*}
  by the Bounded Convergence Theorem. Since $f$ is arbitrary, this
  shows $\lambda_n$ converges to $\lambda_\infty$ in the weak*
  topology.
\end{proof}

\section{Proofs of Lemmas from \S~\ref{sec:E}} \label{app:proofs-dyn-gen}

\subsection{A clopen base}\label{sec:more-defs}
Let $\cS_1, \cS_2, \ldots$ be a sequence of finite partitions of
$\bB$, such that each $S \in \cS_n$ is a clopen set, $\cS_{n+1}$ is a
refinement of $\cS_n$, and $\cS = \cup_n\cS_n$ is a countable base of
the topology of $\bB$, and also of the associated Borel
sigma-algebra.

Likewise, let $\cT_1, \cT_2, \ldots$ be a sequence of finite
partitions of $\bB^G$ with the same properties, and likewise denote
$\cT = \cup_n\cT_n$. Furthermore, let each $T \in \cT$ be of the form
\begin{align*}
  T = \bigcap_{g \in K}g\ell^{-1}(S_g) = \bigcap_{g \in K}\{\xi \in
  \bB^G\,:\,\xi(g) \in S_g\}.
\end{align*}
for some finite $K \subset G$ and a map $g \mapsto S_g \in \cS_n$, for
some $n \in \N$.  This is, again, a countable base of the topology and
of the Borel sigma-algebra.

\subsection{$F$ is a homeomorphism onto its image}

\begin{proof}[Proof of Lemma~\ref{lem:F}]
  It is shown in \S \ref{sec:E} that  $F \colon A_\mu(G,\bB,\lambda) \to
  \cP^\lambda_\mu(\bB^G)$ is injective.  Hence it remains to be shown
  that it is continuous and that its inverse is continuous.

  Recall (see \S~\ref{sec:more-defs}) that $\cS$ is a base for the sigma-algebra of $\bB$.  For $S_1,S_2 \in \cS$, define
  \begin{align*}
    A_{S_1,S_2,g} = \{\xi \in \bB^G\,:\, \xi(e) \in S_1,\xi(g) \in S_2\}.
  \end{align*}
  Let $\zeta = F(a)$. By the definition of $F$,
  \begin{align}
    \label{eq:nu-lambda}
    \zeta(A_{S_1,S_2,g}) = \lambda\left(S_1\cap a(g)S_2\right).
  \end{align}

  Let $\{a_n\}$ be a sequence in $A_\mu(g,\bB,\lambda)$, and denote
  $\nu_n=F(a_n)$.  Since each $\nu_n$ is a graph measure, it is
  determined by the two-dimensional marginals, or equivalently by the
  values of $\nu_n(A_{S_1,S_2,g})$. Hence $\lim_n\nu_n = \nu$ iff
  \begin{align}
    \label{eq:nu-n}
    \lim_n\nu_n(A_{S_1,S_2,g}) = \nu(A_{S_1,S_2,g})
  \end{align}
  for all $S_1,S_2 \in \cS$ and $g \in G$.

  We first show that $F^{-1}$ is continuous by showing that if
  $\lim_n\nu_n = \nu$
  then $\lim_na_n = a$, where $a = F^{-1}(\nu)$.
  Assume then that~\eqref{eq:nu-n} holds. Combining it
  with~\eqref{eq:nu-lambda} yields
  
  \begin{align}  \label{eq:lim-a-2}
  \lim_n \lambda(S_1 \cap a_n(g) S_2) = \lambda(S_1 \cap a(g) S_2),
\end{align}
  which implies $\lim_n a_n= a$. So $F^{-1}$
  is continuous.

  Analogously, to see that $F$ is continuous, assume
  that~\eqref{eq:lim-a-2} holds. Combining this
  with~\eqref{eq:nu-lambda} yields   
  \begin{align*}
    \lim_n\nu_n(A_{S_1,S_2,g}) = \nu(A_{S_1,S_2,g}).
  \end{align*}

\end{proof}

\subsection{The image of $F$ is residual}
To prove Claim~\ref{lem:F-res}, we show that $\Im F$ is a dense
$G_\delta$ in $\cP_\mu^\lambda(\bB^G)$; in Claim~\ref{clm:F-dense}
we show that it is dense, and in Claim~\ref{clm:F-G-delta} we show
that it is a $G_\delta$.

\begin{claim}
  \label{clm:F-dense}
  $\Im F$ is dense in $\cP_\mu^\lambda(\bB^G)$.
\end{claim}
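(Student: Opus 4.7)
My plan is to show that any basic weak* neighborhood of a given $\nu \in \cP_\mu^\lambda(\bB^G)$ contains a graph measure $F(b)$, by transporting the shift action on $(\bB^G,\nu)$ back to $(\bB,\lambda)$ via a measure isomorphism chosen to respect the finite cylinder test that defines the neighborhood. Thus the strategy is to produce the witness $b$ exactly, not merely approximately, at the level of the finitely many tests that cut out the neighborhood.

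By Corollary~\ref{cor:weak} together with the clopen base from \S\ref{sec:more-defs}, a basic neighborhood of $\nu$ is specified by finite data: a finite $K \subset G$ with $e \in K$, a finite clopen partition $\cS = \{S_1,\ldots,S_k\}$ of $\bB$, and $\epsilon > 0$; a measure $\nu'$ lies in it whenever $|\nu'(T_\phi) - \nu(T_\phi)| < \epsilon$ for every $\phi : K \to \{1,\ldots,k\}$, where $T_\phi = \{\xi : \xi(g) \in S_{\phi(g)} \text{ for all } g \in K\}$. Since $\ell_*\nu = \lambda$ is nonatomic, so is $\nu$, and the cells $\ell^{-1}(S_i) \subset \bB^G$ satisfy $\nu(\ell^{-1}(S_i)) = \lambda(S_i)$. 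Applying the standard Borel isomorphism theorem cellwise, I choose a measure-preserving Borel isomorphism $\psi : (\bB^G,\nu) \to (\bB,\lambda)$ with $\psi(\ell^{-1}(S_i)) = S_i$ modulo null sets.

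Transporting the shift action, I set $b(g) := \psi \circ g \circ \psi^{-1}$. This is manifestly a homomorphism into $\Aut^*(\bB,\lambda)$, and $\mu$-stationarity carries over:
\begin{align*}
  \sum_g \mu(g)\, b(g)_*\lambda \;=\; \psi_*\!\left(\sum_g \mu(g)\, g_*\nu\right) \;=\; \psi_*\nu \;=\; \lambda,
\end{align*}
so $b \in A_\mu(G,\bB,\lambda)$. A change of variables $x = \psi(\xi)$ yields
\begin{align*}
  \pi_{b*}\lambda(T_\phi) \;=\; \nu\bigl(\{\xi \in \bB^G : \psi(g^{-1}\xi) \in S_{\phi(g)} \text{ for all } g \in K\}\bigr),
\end{align*}
and the shift rule $(g^{-1}\xi)(e) = \xi(g)$ together with the partition-preserving property of $\psi$ collapses the right side to $\nu(T_\phi)$. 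Hence $F(b)$ agrees with $\nu$ exactly on every test cylinder, a fortiori lies in the chosen neighborhood, so $\Im F$ is dense.

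The only delicate point is to verify that the identity $\psi(\eta) \in S_i \Leftrightarrow \eta(e) \in S_i$, which holds off a $\nu$-null set $N$, may be applied to $\eta = g^{-1}\xi$ for each $g \in K$ without losing a set of positive $\nu$-measure in $\xi$. This requires $g^{-1}_*\nu(N) = 0$ for each $g \in K$, which follows from $G$-quasi-invariance of any $\mu$-stationary measure: the stationarity equation $\nu = \sum_h \mu(h)\, h_*\nu$ forces $h_*\nu \ll \nu$ for every $h \in \supp(\mu)$, hence for all $h \in G$ by semigroup generation, and the reverse absolute continuity follows by applying the same reasoning to $h^{-1}$.
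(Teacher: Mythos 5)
Your proposal is correct and follows essentially the same route as the paper: both construct a measure isomorphism $(\bB^G,\nu)\to(\bB,\lambda)$ that respects the clopen partition on the identity coordinate, conjugate the shift through it to get an action in $A_\mu(G,\bB,\lambda)$, and observe that the resulting graph measure agrees with $\nu$ \emph{exactly} on the test cylinders. Your explicit treatment of the null-set issue via quasi-invariance of stationary measures is a welcome detail that the paper's proof leaves implicit.
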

\begin{proof}
  Recall (see \S~\ref{sec:more-defs}) that $\cS$ is a clopen base of
  the topology of $\bB$, and $\cT$ is a clopen base of
  the topology of $\bB^G$.
  
  Given $\nu \in \cP_\mu^\lambda(\bB^G)$, we construct for each $n
  \in \N$ an action $a_n \in A_\mu(G,\bB,\lambda)$ such that for all
  $T \in \cT$ it holds that $[F(a_n)](T) = \nu(T)$ for $n$ large
  enough. This will prove the claim, since it implies that $\lim_n
  F(a_n) = \nu$.

  Fix $n$.  Then $\{\ell^{-1}(S)\}_{S \in \cS_n}$ is a finite
  partition of $\bB^G$ where $\ell:\bB^G\to\bB$ is the projection map to the identity coordinate.  Denote by $\nu|_{\ell^{-1}(S)}$ the measure $\nu$
  restricted to $\ell^{-1}(S)$, and define $\lambda|_S$ analogously.
  
  Let $\varphi_n \colon \bB^G \to \bB$ be a measurable map that, for
  each $S \in \cS_n$, is a measure isomorphism between
  $(\ell^{-1}(S),\nu|_{\ell^{-1}(S)})$ and $(S,\lambda|_S)$.  This is possible,
  since all of the spaces $(\ell^{-1}(S),\nu|_{\ell^{-1}(S)})$ and
  $(S,\lambda|_S)$ are standard non-atomic finite measure
  spaces, with the same total mass.  It follows that $\varphi_n$ is a
  measure isomorphism between $(\bB^G,\nu)$ and $(\bB,\lambda)$.

  Now, let $a_n \in A_\mu(G,\bB,\lambda)$ be given by, for every $g \in
  G$ and $x \in \bB$,
  \begin{align}
    \label{eq:a_n-def}
    [a_n(g)](x) = \varphi_ng\varphi_n^{-1}x,
  \end{align}
  where the $G$-action is here by shifts on $\bB^G$. This is indeed a
  $\mu$-stationary action, since it is conjugate to $(\bB^G,\nu)$.

  Finally, for $T \in \cT$, we show that $[F(a_n)](T) = \nu(T)$ for
  $n$ large enough. By the definition of $\cT$, for $n \in \N$ large
  enough there exists a finite $K \subset G$ and a map $g \mapsto S_g$
  from $K \to \cS_n$ such that
  \begin{align}
    \label{eq:T-decompo}
    T = \bigcap_{g \in K}g\ell^{-1}(S_g) = \bigcap_{g \in K}\{\xi
    \in \bB^G\,:\,\xi(g) \in S_g\}.
  \end{align}

  By the definitions of $F$ and $\pi_{a_n}$,
  \begin{align*}
    [F(a_n)](T) = [\pi_{a_n*}\lambda](T) 
    = \lambda(\{x \in \bB\,:\,\pi_{a_n}(x) \in T\}).
  \end{align*}
  By~\eqref{eq:T-decompo}
  \begin{align*}
    = \lambda(\{x \in \bB\,:\,[\pi_{a_n}(x)](g) \in S_g \mbox{ for all } g
    \in K\}).
  \end{align*}
  Hence by the definitions of $\pi_{a_n}$ and $a_n$
  (in~\eqref{eq:a_n-def})
  \begin{align*}
    &= \lambda(\{x \in \bB\,:\,a_n(g^{-1})(x) \in S_g
    \mbox{ for all } g \in K\})\\
    &= \lambda(\{x \in \bB\,:\,\varphi_ng^{-1}\varphi_n^{-1}x \in S_g
    \mbox{ for all } g \in K\}).
  \end{align*}
  Now, $\varphi_n^{-1}S_g = \ell^{-1}(S_g)$, and so
  \begin{align*}
    &= \lambda(\{x \in \bB\,:\,\varphi_n^{-1}x \in
    g\ell^{-1}(S_g) \mbox{ for all } g \in K\}).
  \end{align*}
  But $\varphi_{n*}^{-1}\lambda = \nu$ and so
  \begin{align*}
    = \nu(\{\xi \in \bB^G\,:\,\xi \in g\ell^{-1}(S_g) \mbox{ for all } g \in K\})
    = \nu(T).
  \end{align*}

  Thus $\Im F$ is dense in $\cP^\lambda_\mu(\bB^G)$.
\end{proof}

\begin{claim}
  \label{clm:F-G-delta}
  $\Im F$ is a $G_\delta$ subset of $\cP_\mu^\lambda(\bB^G)$.
\end{claim}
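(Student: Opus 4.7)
My plan is to give an intrinsic characterization of $\Im F$ in terms of an extreme-point condition on two-coordinate marginals, and then apply Lemma~\ref{thm:ex-G-delta}. Specifically, for each $g \in G$ let $\pi_g\colon \bB^G \to \bB \times \bB$ be the projection $\pi_g(\xi) = (\xi(e), \xi(g))$. I claim that $\nu \in \cP^\lambda_\mu(\bB^G)$ lies in $\Im F$ if and only if, for every $g \in G$, the two-coordinate marginal $\pi_{g*}\nu$ is concentrated on the graph of a measurable function $\bB \to \bB$ (defined $\lambda$-a.e.). The forward direction is immediate from $F(a) = \pi_{a*}\lambda$: under $\pi_a$, the coordinate $g$ is exactly $a(g^{-1})\xi(e)$. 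For the reverse direction, the graph condition over all (countably many) $g \in G$ provides, off a $\nu$-null set, measurable functions $\phi_g\colon \bB \to \bB$ with $\xi(g) = \phi_g(\xi(e))$; hence $\xi \mapsto \xi(e)$ is an a.s.\ measurable isomorphism from $(\bB^G,\nu)$ onto $(\bB,\lambda)$. Since $\nu$ is $\mu$-stationary and $\supp(\mu)$ generates $G$, the shift action on $(\bB^G,\nu)$ is nonsingular; transporting it through this isomorphism produces a nonsingular stationary action $a \in A_\mu(G,\bB,\lambda)$ with $\pi_a$ equal to the inverse of $\xi \mapsto \xi(e)$, so $F(a) = \nu$.

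To turn the graph condition into an extreme-point condition, I would fix a Borel embedding $\bB \cong \Irr \hookrightarrow [0,1]$ and regard $\pi_{g*}\nu$ as an element of the compact metrizable convex set $\cP([0,1]^2 \mid \lambda)$ of Borel probability measures on $[0,1]^2$ with first marginal $\lambda$. A classical fact is that the extreme points of $\cP([0,1]^2 \mid \lambda)$ are exactly the measures supported on the graph of a measurable map $[0,1] \to [0,1]$; and because $\pi_{g*}\nu$ is automatically concentrated on $\bB \times \bB$, being extreme there is equivalent to being a graph measure of a function $\bB \to \bB$ a.e.\ (the classical argument via splitting a non-Dirac conditional goes through without change). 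Combined with the characterization above, this yields
\begin{equation*}
  \Im F \;=\; \bigcap_{g \in G}\bigl\{\nu \in \cP^\lambda_\mu(\bB^G) \csuchthat \pi_{g*}\nu \in \Extreme\bigl(\cP([0,1]^2 \mid \lambda)\bigr)\bigr\}.
\end{equation*}

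Finally, I would assemble the $G_\delta$ conclusion. The proof of Lemma~\ref{thm:ex-G-delta} uses only compactness and metrizability (not the simplex structure), so it applies to $\cP([0,1]^2 \mid \lambda)$ and shows that its extreme boundary is a $G_\delta$. The map $\nu \mapsto \pi_{g*}\nu$, composed with the continuous inclusion $\cP(\bB^2) \hookrightarrow \cP([0,1]^2)$, is weak*-continuous on $\cP^\lambda_\mu(\bB^G)$; hence for each $g$ the preimage of the extreme boundary is a $G_\delta$ in $\cP^\lambda_\mu(\bB^G)$, and a countable intersection of $G_\delta$'s over $g \in G$ exhibits $\Im F$ as a $G_\delta$.

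The main obstacle is the reverse direction of the characterization: verifying that from the family of almost-sure graph identities $\xi(g) = \phi_g(\xi(e))$ one actually recovers a bona fide $G$-homomorphism $a\colon G \to \Aut^*(\bB,\lambda)$ with $F(a) = \nu$, rather than a mere collection of maps indexed by $G$. Once the nonsingular shift action on $(\bB^G,\nu)$ is made explicit and transported through the a.s.\ isomorphism $\xi \mapsto \xi(e)$, the homomorphism property is inherited from the shift and $F(a) = \nu$ becomes a direct calculation.
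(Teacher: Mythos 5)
Your argument is correct, but it takes a genuinely different route from the paper's. The paper's proof is a two-line appeal to the classical fact that a completely metrizable subset of a metric space is a $G_\delta$ (Oxtoby, Theorem 12.3): since Lemma~\ref{lem:F} shows $F$ is a homeomorphism of the Polish space $A_\mu(G,\bB,\lambda)$ onto $\Im F$, the image is completely metrizable and hence $G_\delta$. You instead give an \emph{intrinsic} characterization of $\Im F$ --- a measure $\nu \in \cP^\lambda_\mu(\bB^G)$ is a graph measure iff each two-coordinate marginal $\pi_{g*}\nu$ is extreme in the compact convex set of measures on $[0,1]^2$ with first marginal $\lambda$ --- and then invoke Lemma~\ref{thm:ex-G-delta} (whose proof, as you correctly note, uses only compactness, metrizability and convexity, not the simplex structure; the paper itself already applies it in this generality in Corollary~\ref{cor:proximal}) together with continuity of $\nu \mapsto \pi_{g*}\nu$ and a countable intersection over $g \in G$. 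The delicate points all check out: injectivity of $\ell$ off a $\nu$-null set follows from intersecting the countably many a.s.\ graph identities; nonsingularity of the shift on $(\bB^G,\nu)$ follows from stationarity plus the generating hypothesis on $\mu$; and transporting the shift through the resulting a.s.\ isomorphism produces a genuine homomorphism into $\Aut^*(\bB,\lambda)$ with $\pi_a = \ell^{-1}$, so $F(a)=\nu$. What your approach buys is an explicit description of $\Im F$ that is independent of Lemma~\ref{lem:F}; what it costs is length and reliance on the classical extreme-point characterization of graph measures. The paper's route is shorter but opaque (no description of which measures lie in the image), and it cannot be run without first establishing Lemma~\ref{lem:F}.
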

\begin{proof}
  A classical result states that if a subset of a metric space is
  completely metrizable then it is a $G_\delta$ (see,
  e.g.,~\cite[Theorem 12.3]{Ox71}). Since $\cP_\mu^\lambda(\bB^G)$ is
  Polish, and since its subset $\Im F$ is the homeomorphic image
  (Lemma~\ref{lem:F}) of the Polish space $A_\mu(G,\bB,\lambda)$, it
  follows that $\Im F$ is a $G_\delta$ subset of
  $\cP_\mu^\lambda(\bB^G)$.
\end{proof}






\bibliography{poulsen}
\bibliographystyle{abbrv}

\end{document}